\def\widebreve#1{\mathop{\vbox{\m@th\ialign{##\crcr\noalign{\kern\p@}%
  \brevefill\crcr\noalign{\kern0.1\p@\nointerlineskip}%
  $\hfil\displaystyle{#1}\hfil$\crcr}}}\limits}
\def\brevefill{$\m@th \setbox\z@\hbox{}%
 \hfill\scalebox{0.7}{\rotatebox[origin=c]{90}{(}} \kern4pt $}
\theoremstyle{plain}
\newtheorem{theo}{Theorem}[subsection]
\newtheorem{lemm}[theo]{Lemma}
\newtheorem{pro}[theo]{Proposition}
\newtheorem{cor}[theo]{Corollary}
\theoremstyle{remark}
\newtheorem{rem}[theo]{Remark}
\newtheorem{ex}[theo]{Example}
\theoremstyle{definition}
\newtheorem{defi}[theo]{Definition}
\newtheorem*{claim}{Claim}
\newtheorem{nclaim}{Claim}
\newcommand{\B}{{\mathcal{B}}}
\newcommand{\ed}{\mathrm{end}}
\newcommand{\id}{e}
\newcommand{\wt}{\mathrm{wt}}
\newcommand{\dg}{\mathrm{deg}}
\newcommand{\dir}{\mathrm{dir}}
\newcommand{\OS}{\mathrm{OS}}
\newcommand{\aff}{\mathrm{aff}}
\newcommand{\ext}{\mathrm{ext}}
\newcommand{\eqdef}{:=}
\newcommand{\reqdef}{=:}
\newcommand{\ardef}{\overset{\rm def}{\Leftrightarrow}}
\newcommand{\bqed}{\quad \hbox{\rule[-0.5pt]{6pt}{6pt}}  \vspace{3mm}}
\newcommand{\lon}{w_\circ}
\newcommand{\lons}{w_\circ (S)}
\newcommand{\zero}{ {\bf 0 } }
\newcommand{\DBG}{\mathrm{DBG}}
\newcommand{\PQLS}{\mathrm{pQLS}}
\newcommand{\BPQLS}{\overline{\mathrm{pQLS}}}
\newcommand{\half}{\frac{1}{2}}
\newcommand{\pr}{\mathrm{pr}}
\newcommand{\pair}[2]{\langle #1,\,#2 \rangle}
\newcommand{\BZ}{\mathbb{Z}}
\newcommand{\BB}{\mathbb{B}}
\newcommand{\ve}{\varepsilon}
\newcommand{\vp}{\varphi}
\newenvironment{enu}{%
 \begin{enumerate}%
}{\end{enumerate}}
\begin{document}

\setlength{\parskip}{4pt}

\title[Macdonald Polynomials via a Path Model]{Symmetric and Nonsymmetric Macdonald Polynomials via a Path Model with a Pseudo-crystal Structure}

\author[C.~Lenart]{Cristian Lenart}
\address[Cristian Lenart]{Department of Mathematics and Statistics, State University of New York at Albany, 
Albany, NY 12222, U.S.A.}
\email{clenart@albany.edu}

\author[S.~Naito]{Satoshi Naito}
\address[Satoshi Naito]{Department of Mathematics, Tokyo Institute of Technology,
2-12-1 Oh-Okayama, Meguro-ku, Tokyo 152-8551, Japan}
\email{naito@math.titech.ac.jp}

\author[F.~Nomoto]{Fumihiko Nomoto}
\address[Fumihiko Nomoto]{Tokyo Tech High School of Science and Technology,
3-3-6 Shibaura, Minato-ku, Tokyo 108-0023, Japan}
\email{fnomoto@hst.titech.ac.jp}

\author[D.~Sagaki]{Daisuke Sagaki}
\address[Daisuke Sagaki]{Department of Mathematics, 
Faculty of Pure and Applied Sciences, University of Tsukuba, 
1-1-1 Tennodai, Tsukuba, Ibaraki 305-8571, Japan}
\email{sagaki@math.tsukuba.ac.jp}

\begin{abstract} In this paper we derive a counterpart of the well-known Ram-Yip formula for symmetric and nonsymmetric Macdonald polynomials of arbitrary type. Our new formula is in terms of a generalization of the Lakshmibai-Seshadri paths (originating in standard monomial theory), which we call pseudo-quantum Lakshmibai-Seshadri (LS) paths. This model carries less information than the alcove walks in the Ram-Yip formula, and it is therefore more efficient. Furthermore, we construct a connected pseudo-crystal structure on the pseudo-quantum LS paths, which is expected to lead to a simple Littlewood-Richardson rule for multiplying Macdonald polynomials. By contrast with the Kashiwara crystals, our pseudo-crystals have edges labeled by arbitrary roots. 
\end{abstract}

\maketitle

\section{Introduction}

Macdonald polynomials are an important family of polynomials associated with an irreducible affine root system, with rational function coefficients in $q,t$. They have deep connections with several areas of mathematics: double affine Hecke algebras (DAHA), the representations of affine Lie algebras, $p$-adic groups, integrable systems,  conformal field theory,  statistical mechanics,  Hilbert schemes, etc. 

There are two versions of Macdonald polynomials: the symmetric ones (under the Weyl group action), denoted $P_\lambda(q,t)$, where $\lambda$ is a nonaffine dominant weight, and the nonsymmetric ones $E_\mu(q,t)$, where $\mu$ is an arbitrary nonaffine weight. They were defined in~\cite{M} in the setup of the corresponding double affine Hecke algebras, and are orthogonal with respect to the so-called Macdonald scalar product. 

The symmetric and nonsymmetric Macdonald polynomials generalize the irreducible characters and Demazure characters of simple Lie algebras, respectively, which are recovered by setting $q=t=0$. Other specializations are also important: $P_\lambda(q=0,t)$ are the Hall-Littlewood polynomials; $P_\lambda(q,t=q^\gamma)$, with $\gamma$ a positive real number, become the Jack polynomials by setting $q\to 1$; $E_\mu(q=\infty,t)$ are the Iwahori Whittaker functions in number theory; $E_\mu(q,t=0)$ and $E_\mu(q,t=\infty)$, as well as their symmetric versions, are closely related to the representation theory of quantum affine algebras (see below).  

Given their prominent role, it is important to have explicit formulas for the monomial expansions of Macdonald polynomials, which turn out to have a combinatorial nature. The first such formulas were given in type $A$: Macdonald's formula for $P_\lambda(q,t)$ in terms of semistandard Young tableaux~\cite[Chapter VI, Section 7]{M0}, and the Haglund-Haiman-Loehr formulas for $P_\lambda(q,t)$ and $E_\mu(q,t)$ in~\cite{HHL1} and~\cite{HHL2}, respectively, which are in terms of so-called non-attacking fillings of certain diagrams. Other formulas for the type $A$ Macdonald polynomials were recently derived through connections with a statistical mechanics model called the multispecies asymmetric simple exclusion process (ASEP) on a circle~\cite{CHMMW, CMW}, as well as via integrable lattice models~\cite{GW}. On another hand, building on work of Schwer~\cite{S}, Ram and Yip~\cite{RY} gave a uniform combinatorial formula for both the symmetric and nonsymmetric Macdonald polynomials of arbitrary type, in terms of so-called alcove walks. Recently, a very interesting generalization of the Ram-Yip formula was derived in~\cite{sa} for the so-called metaplectic Macdonald polynomials in~\cite{ssv1, ssv2} (which specialize to the metaplectic Whittaker functions in number theory).

Several specializations of the Ram-Yip formula, in some of the cases mentioned above, were worked out in~\cite{Le2, OS}. In~\cite{Le1, GR} it was shown that the Haglund-Haiman-Loehr formulas can be derived as ``compressed'' versions of the Ram-Yip formulas. On another hand, several applications of the Ram-Yip formula and its specializations were derived. Among them is the uniform combinatorial model, called the quantum alcove model, for tensor products of so-called (single-column) Kirillov-Reshetikhin (KR) crystals of quantum affine algebras~\cite{2, 3}; see~\cite{kas} and below for the theory of Kashiwara crystals. A corollary of this work is the identification of $P_\lambda(q,t=0)$ and $E_\mu(q,t=0)$ with graded characters of tensor products of KR modules and certain Demazure-type submodules, respectively. The quantum alcove model extends the alcove model in~\cite{LP}, which is a uniform combinatorial model for the characters and crystals corresponding to the irreducible integrable highest weight modules for complex symmetrizable Kac-Moody algebras. On another hand, the alcove model can be viewed as a discrete counterpart of the Littelmann path model~\cite{L1, L2}.

In this paper we derive new combinatorial formulas for the symmetric and nonsymmetric Macdonald polynomials in terms of so-called pseudo-quantum Lakshmibai-Seshadri (LS) paths; see Theorem~\ref{thm:graded_character}. We also specialize our formulas to similar ones for Jack and Hall-Littlewood  polynomials; see Corollaries~\ref{jack} and~\ref{hl}, respectively. LS paths are certain piecewise-linear paths which were originally defined in the context of standard monomial theory~\cite{LS1}. They are special cases of Littelmann paths~\cite{L1, L2}. The generalization of LS paths called quantum LS paths was used in~\cite{2, 3} as yet another model for tensor products of (single-column) KR crystals. Here we generalize the quantum LS paths even further, by considering the mentioned pseudo-quantum LS paths. Essentially, the mentioned generalizations are obtained by replacing a certain condition based on the Bruhat order on the Weyl group (for LS paths) with similar conditions based on the so-called quantum Bruhat graph (for quantum LS paths, see~\cite{1}) and the double Bruhat graph, considered here (for pseudo-quantum LS paths).

We derive the new formulas for the symmetric and nonsymmetric Macdonald polynomials by translating the corresponding Ram-Yip formulas, namely by bijecting the respective alcove walks and pseudo-quantum LS paths, and by translating the corresponding statistics. A similar bijection was constructed in~\cite{2} in the special case of the quantum alcove model and quantum LS paths, and earlier in~\cite{LP} for the alcove model and LS paths. The advantage of LS path-type models is that they carry less information than models based on alcove walks, and therefore are more efficient. Note that the quantum LS paths were successfully used in studying connections between specializations of Macdonald polynomials and the representation theory of quantum affine algebras (in particular, the so-called level-zero extremal weight modules); this was carried out in several recent papers, namely~\cite{NNS, nnstpd, NS, nasdsl, naslzv, nomqls, nomgwm}. Therefore, we expect similar applications of our pseudo-quantum LS paths.

Finally, in Section~\ref{pseudo-cryst} we construct a pseudo-crystal structure on pseudo-quantum LS paths. Crystals were defined by Kashiwara~\cite{kas} as (connected) colored directed graphs encoding certain representations of quantum affine algebras (including the irreducible highest weight ones) in the limit of the quantum parameter going to zero; the edge colors correspond to labeling the edges by simple roots. By contrast, our pseudo-crystals have edges labeled by arbitrary roots, and are also shown to be connected graphs. Therefore, for a fixed dominant weight $\lambda$, the pseudo-crystal structure can be used
as an alternative way to generate all the pseudo-quantum LS paths of shape $\lambda$ 
by starting from the special one $(e; 0, 1)$, which is thought of as the straight-line path joining the origin to $\lambda$.

Crystals were successfully used to solve several basic problems in representation theory, such as decomposing tensor products of irreducible representations. Essentially, such a Littlewood-Richardson rule is easily derived by considering the highest weight vertices in the connected components of a tensor product of the corresponding crystals, which is defined via a specific tensor product rule. The Littlewood-Richardson rule can be made very explicit by using Littelmann paths~\cite{L1, L2} or the alcove model~\cite{LP} for the mentioned crystals, as pointed out above.

By analogy, we expect our pseudo-crystal structure on pseudo-quantum LS paths to have applications to a Littlewood-Richardson rule for Macdonald polynomials; this would express the product of two Macdonald polynomials in terms of Macdonald polynomials. Such a rule was derived by different methods in~\cite{yip}. However, it is quite involved, while we expect the proposed approach to lead to a simpler rule. Indeed, the Littlewood-Richardson rule for the irreducible representations based on tensor products of crystals is the simplest such rule. On another hand, it would be interesting to extend this approach to the metaplectic Macdonald polynomials mentioned above.

\subsection*{Acknowledgments}

 C.L. was partially supported by the NSF grant DMS-1855592. 
 S.N. was partially supported by JSPS Grant-in-Aid for Scientific Research (C) 21K03198. 
 D.S. was partially supported by JSPS Grant-in-Aid for Scientific Research (C) 19K03415. 

\section{Macdonald polynomials in terms of pseudo-quantum LS paths}

\subsection{Pseudo-quantum Lakshmibai-Seshadri paths}

Let $\mathfrak{g}$ be a finite-dimensional simple Lie algebra over $\mathbb{C}$,
$I$ the vertex set for the Dynkin diagram of  $\mathfrak{g}$,
and
$\{ \alpha_i \}_{i \in I }$
(resp., $\{ {\alpha}^{\lor}_i \}_{i \in I }$)
 the set of simple roots (resp., coroots) of  $\mathfrak{g}$.
Then
$\mathfrak{h} = \bigoplus_{i \in I}\mathbb{C}\alpha^\lor_i$ is a Cartan subalgebra of  $\mathfrak{g}$,
with
$\mathfrak{h}^* = \bigoplus_{i \in I}\mathbb{C}\alpha_i$ the dual space of $\mathfrak{h}$ 
and $\mathfrak{h}_\mathbb{R}^* = \bigoplus_{i \in I}\mathbb{R}\alpha_i$ its real form;
the canonical pairing between  $\mathfrak{h}$ and  $\mathfrak{h}^*$ is denoted by
$\langle \cdot, \cdot \rangle : \mathfrak{h}^* \times \mathfrak{h} \rightarrow \mathbb{C}$.
Let $Q = \sum_{i \in I}\mathbb{Z}\alpha_i  
\subset \mathfrak{h}_\mathbb{R}^*$ denote 
the root lattice,
$Q^\lor = \sum_{i \in I}\mathbb{Z}\alpha_i^\lor  
\subset \mathfrak{h}_\mathbb{R}$ 
the coroot lattice,
and 
$P = \sum_{i \in I}\mathbb{Z}\varpi_i \subset \mathfrak{h}_\mathbb{R}^*$  the weight lattice of $\mathfrak{g}$,
where the $\varpi_i$, $i \in I$, are the fundamental weights for $\mathfrak{g}$,
i.e., 
$\langle \varpi_i ,\alpha_j^\lor \rangle = \delta_{i j}$
for $i, j \in I$;
we set $P^+ \eqdef \sum_{i \in I} \BZ_{\geq 0} \varpi_i$, and call an element $\lambda$ of $P^+$ a dominant (integral) weight.
Let us denote by $\Delta$ the set of roots,
and by $\Delta^{+}$ (resp., $\Delta^{-}$) the set of  positive (resp., negative) roots.
Also, let $W \eqdef \langle s_i \ | \ i \in I \rangle$
be the Weyl group of $\mathfrak{g}$,
where
$s_i $, $i \in I$, are the simple reflections acting on $\mathfrak{h}^*$ and on $\mathfrak{h}$
as follows:
\begin{align*}
s_i \nu = \nu - \langle  \nu , \alpha^\lor_i  \rangle \alpha_i & \ \ \
\text{for } \nu \in \mathfrak{h}^*,\\
s_i h = h - \langle \alpha_i , h \rangle \alpha^\lor_i
& \ \ \
\text{for } h \in \mathfrak{h};
\end{align*}
we denote the identity element and the longest element of $W$ by $e$ and $\lon$, respectively.
If $\alpha \in \Delta$ is written as $\alpha = w \alpha_i$ for 
$w\in W$ and $i \in I$, 
then its coroot $\alpha^\lor$ is $w \alpha^\lor_i$;
we often identify $s_\alpha$ with $s_{\alpha^\lor}$.
For  $u \in W$,
the length of $u$ is denoted by $\ell(u)$,
 which agrees with
 the cardinality of the set
$\Delta^+ \cap u^{-1}\Delta^-$.

\begin{defi}\label{DBG}
The double Bruhat graph, denoted by $\DBG$, is the directed graph with vertex set $W$ and  directed edges 
 labeled by all positive roots;
for $u,v \in W$, and $\beta \in \Delta^+$, 
$u \xrightarrow{\beta} u s_\beta $ is an edge of $\DBG$.
An edge satisfying $\ell(u s_\beta) > \ell(u)$ (resp., $\ell(u s_\beta) < \ell(u)$)
is called a Bruhat (resp., quantum) edge.
\end{defi}

For a subset $S \subset I$,
we set
$W_S \eqdef \langle s_i \ | \ i \in S \rangle$.
We denote the longest element of $W_S$ by $\lons$.
Also, we set $\Delta_S \eqdef Q_S \cap \Delta^+$,
$\Delta_S^+ \eqdef \Delta_S \cap \Delta^+ $, and
$\Delta_S^- \eqdef \Delta_S \cap \Delta^- $,
where $Q_S \eqdef \sum_{i \in S} \mathbb{Z}\alpha_i$. 
Let $W^S (\subset W^S)$ denote the set of all minimal-length coset representatives for the cosets in $W / W_S$.
For $w\in W$, we denote  the minimal-length coset representative of the coset $w W_S$ by 
$\lfloor w \rfloor$, and
for a subset $T \subset W$, 
we set $\lfloor T \rfloor \eqdef \{ \lfloor w \rfloor \ | \ w \in T \} \subset W^S$.

%

We take and fix an arbitrary dominant weight
$\lambda \in P^+$, i.e., 
$\langle \lambda , \alpha^{\lor}_i \rangle \geq 0$
for all $i \in I$.
We set 
\begin{equation*}S = S_\lambda \eqdef \{ i \in I \ | \  \langle \lambda , \alpha^{\lor}_i \rangle =0 \} \subset I.
\end{equation*}

\begin{defi}
Let $\lambda \in P^+$ be a dominant weight and 
$b \in \mathbb{Q}\cap [0,1]$.
We denote by
$\DBG_{b\lambda}$ 
 the subgraph of $\DBG$ 
with the same vertex set but having only the edges:
$u \xrightarrow{\beta} v$ with $b\langle \lambda, \beta^{\lor}  \rangle \in \mathbb{Z}$.
\end{defi}

\begin{rem}\label{rem:1.3}
Let $\lambda \in P^+$ be a dominant weight and 
$b \in \mathbb{Q}\cap [0,1]$.
Let $u,v \in W$.
If there exists a directed path 
\begin{equation*}
u = x_0 \xrightarrow{\gamma_1}
x_1\xrightarrow{\gamma_2}
\cdots
\xrightarrow{\gamma_r}
x_r
=
v
\end{equation*}
from
$u$ 
to
$v$ 
in $\DBG_{b \lambda}$,
then
$b(v \lambda -u \lambda)\in P$
since
$b(v \lambda -u \lambda)\in 
\sum_{k=1}^{r}
\BZ \pair{\lambda}{\gamma_k^\lor}$.
\end{rem}

For $\lambda \in P^+$ and $b \in \mathbb{Q} \cap [0,1]$, 
we define the $(b, \lambda)$-degree of an edge $u \xrightarrow{\beta} v $ in $\DBG_{b \lambda}$ (resp.,  $\DBG^S_{b \lambda}$) by:
\begin{equation*}
\dg_{b\lambda} (u \rightarrow v)
=
\left\{
\begin{array}{ll}
      \frac{ t^{-\half}(1-t) }{1- q^{(1-b)\langle \lambda, \beta^{\lor}  \rangle}t^{\langle \rho , \beta^{\lor }\rangle}}
 & \mbox{if} \ u \xrightarrow{\beta} v \mbox{ is a Bruhat edge}, \\
      \frac{ q^{(1-b)\langle \lambda, \beta^{\lor}  \rangle}t^{- \half + \langle \rho , \beta^{\lor }\rangle}(1-t)}{1- q^{(1-b)\langle \lambda, \beta^{\lor}  \rangle}t^{\langle \rho , \beta^{\lor }\rangle}} &  \mbox{if} \ u \xrightarrow{\beta} v \mbox{ is a quantum edge},
\end{array}
\right.
\end{equation*}
where $q,t$ are indeterminates, and $\rho \eqdef \frac{1}{2}\sum_{\alpha \in \Delta^+} \alpha$.

\begin{defi}\label{def_qls}
Let $\lambda \in P^+$,
and set $S = S_\lambda = \{ i \in I \ | \ \langle \lambda , \alpha^\lor_i \rangle =0 \}$.
A pair $\eta = (w_1, w_2 ,\ldots ,w_s ; \tau_0, \tau_1 , \ldots , \tau_s ) $
of a sequence
$w_1,\ldots , w_s$ of  elements in $W^S$  and a increasing sequence  
$0=\tau_0<\cdots < \tau_s=1$ of rational numbers,
is called a pseudo-quantum Lakshmibai-Seshadri ($\PQLS$) path of shape $\lambda$
if 

(C)
for every $1\leq i \leq s-1$, there exists a directed  path from $w_{i+1}$ to $w_{i}$ in $\DBG_{\tau_i \lambda}$.

\noindent
Let $\BPQLS(\lambda)$ denote the set of all $\PQLS$ paths of shape $\lambda$.
\end{defi}

We set 
$\BPQLS^\mu(\lambda)
\eqdef \{ (w_1
, w_2, \ldots , w_s; \sigma_0, \ldots ,\sigma_s) \in \BPQLS(\lambda) \ | \
w_1 \leq v(\mu)\}$, $\mu \in W\lambda$,
where $\leq$ denotes the Bruhat order on $W$.
%
%
%
%
For $\eta=(w_1, w_2 ,\ldots ,w_s ; \tau_0, \tau_1 , \ldots , \tau_s ) \in \BPQLS(\lambda)$,
we set 
\begin{equation*}
\wt(\eta) 
\eqdef
 \sum^{s-1}_{i=0}(\tau_{i+1}-\tau_i) w_{i+1} \lambda.
\end{equation*}

\subsection{Relationship between $\BPQLS^{\mu}(\lambda)$ and $\PQLS^{\mu}(\lambda)$.}

Let $\lambda \in P^+$ be a dominant weight, 
and $\mu \in W\lambda$.
We set  $\lambda_- \eqdef \lon \lambda$.
We denote by $v(\mu) \in W^S$
the minimal-length coset representative for the coset
$\{ w \in W \ | \ w \lambda =\mu \}$
in $W/ W_S$;
recall that 
$S = S_\lambda = \{ i\in I \ | \ \langle \lambda, \alpha^\lor_i \rangle = 0 \}$.
Then
we have $\ell(v(\mu)w)=\ell(v(\mu))+ \ell(w)$ for all $w \in W_S$.
In particular, we have $\ell(v(\mu)\lons )=\ell(v(\mu))+ \ell(\lons)$.
When $\mu=\lambda_-$, it is clear that
$\lon \in \{ w \in W \ | \ w\lambda = \lambda_- \}$.
Since $\lon$ is the longest element of $W$, we have $\lon = v(\lambda_-) \lons$
and $\ell(v(\lambda_-)\lons )=\ell(v(\lambda_-))+ \ell(\lons)$;
note that $v(\lambda_-) = \lon \lons = \lfloor \lon \rfloor$.

We fix reduced expressions
\begin{align*}
v(\lambda_-) v(\mu)^{-1}&=  s_{i_1}\cdots s_{i_{K}} , \\ 
v(\mu) &= s_{i_{K+1}}\cdots s_{i_{M}},\\
\lons &= s_{i_{M+1}}\cdots s_{i_N},
\end{align*}
for 
$v(\lambda_-)v(\mu)^{-1}$, $v(\mu)$, and $\lons$, respectively.
Then, 
$v(\lambda_-) =  s_{i_1}\cdots s_{i_M}$
and
$\lon =  s_{i_1}\cdots s_{i_N}$
are reduced expressions for $v(\lambda_-)$ and $\lon$, respectively.
We set
$\beta_j \eqdef s_{i_N}\cdots s_{i_{j+1}}\alpha_{i_j}$ for $1 \leq j \leq N$.
Then we have
\begin{equation}\label{eq:beta}
\begin{split}
\Delta^+ \setminus \Delta_S^+ &= \{ \beta_{1}, \ldots , \beta_{M}\},\\
\Delta^+_S &= \{ \beta_{M+1}, \ldots , \beta_{N}\},\\
\Delta^+ &= \{ \beta_{1}, \ldots , \beta_{N}\},\\
\lons (\Delta^+ \cap v(\mu)\Delta^-) &= \{ \beta_{K+1}, \ldots , \beta_{M}\}. \\
\end{split}
\end{equation}

We define a total order $\succ$ on $\Delta^+$ by:
\begin{align}\label{weak_reflection_order}
\underbrace{ \beta_1 \succ \beta_2 \succ \cdots \succ \beta_M }_{\in  \Delta^+ \setminus \Delta_S^+} \succ 
\underbrace{ \beta_{M+1} \succ \beta_{N}}_{\in \Delta_S^+}
.
\end{align}

\begin{rem}
The total order $\succ$ above is a  reflection order on $\Delta^+$; that is,
if $\alpha ,\beta ,\gamma \in \Delta^+$ with $\gamma^\lor = \alpha^\lor + \beta^\lor$,
then $\alpha \prec \gamma \prec \beta$ or $\beta \prec \gamma \prec \alpha$.
\end{rem}

For
$\alpha \in \Delta$,
we set
\begin{align*}
|\alpha|
\eqdef
\left\{
\begin{array}{ll}
				\alpha  & \ \ \mbox{if} \ \alpha \in \Delta^+  ,  \\
				-\alpha  & \ \ \mbox{if} \ \alpha \in \Delta^-.  
			\end{array}
\right.
\end{align*}
\begin{lemm}\label{involution}
\mbox{}%
\begin{enu}
\item
Let $w \in W$.
If $x \xrightarrow{\beta} y$ is an edge of $\DBG$, then 
$wx \xrightarrow{\beta} wy$ and
$yw \xrightarrow{| w^{-1} \beta |} xw$ are also edges of $\DBG$.

\item
If $x \xrightarrow{\beta} y$ is a Bruhat $($resp., quantum$)$ edge of $\DBG$, then $y\lon \xrightarrow{- \lon \beta} x \lon$ is also a Bruhat $($resp., quantum$)$ edge of $\DBG$.
\end{enu}
\end{lemm}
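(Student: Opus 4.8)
The plan is to reduce everything to the defining relation of an edge, namely that $x \xrightarrow{\beta} y$ in $\DBG$ means $y = xs_\beta$ with $\beta \in \Delta^+$, together with two standard facts: the conjugation identity $w s_\gamma w^{-1} = s_{w\gamma}$ for a reflection, and the length identity $\ell(w\lon) = \ell(\lon) - \ell(w)$ for all $w \in W$. I would also use repeatedly that a reflection depends only on the line through its root, so $s_\gamma = s_{|\gamma|}$ for every $\gamma \in \Delta$.

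For part (1), I would argue directly. Writing $y = xs_\beta$, left multiplication by $w$ gives $wy = (wx)s_\beta$; since $\beta \in \Delta^+$ is unchanged, this exhibits $wx \xrightarrow{\beta} wy$ as an edge of $\DBG$. For the right-multiplication statement, I would instead solve $x = ys_\beta$ and compute $xw = (yw)(w^{-1}s_\beta w) = (yw)\, s_{w^{-1}\beta}$. Because $s_{w^{-1}\beta} = s_{|w^{-1}\beta|}$ and $|w^{-1}\beta| \in \Delta^+$, this is precisely the edge $yw \xrightarrow{|w^{-1}\beta|} xw$. Thus (1) is pure bookkeeping with the conjugation identity and the normalization $s_\gamma = s_{|\gamma|}$.

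For part (2), edge-membership is just the case $w = \lon$ of the second assertion in (1): it produces the edge $y\lon \xrightarrow{|\lon^{-1}\beta|} x\lon$, and since $\lon = \lon^{-1}$ sends $\Delta^+$ into $\Delta^-$ we have $|\lon\beta| = -\lon\beta$, which is the stated label. The only new content is that the Bruhat/quantum type is preserved, and here the length identity does the work: the original edge is Bruhat iff $\ell(y) > \ell(x)$, while the new edge runs from $y\lon$ to $x\lon = (y\lon)s_{-\lon\beta}$ and so is Bruhat iff $\ell(x\lon) > \ell(y\lon)$. Substituting $\ell(x\lon) = \ell(\lon) - \ell(x)$ and $\ell(y\lon) = \ell(\lon) - \ell(y)$, the latter inequality becomes $\ell(y) > \ell(x)$, matching the former; the reversed inequalities match as well, giving the quantum case.

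There is no serious obstacle: the whole lemma is careful bookkeeping. The two points that do require attention are the sign and absolute-value conventions on the labels --- one must check that $s_{w^{-1}\beta} = s_{|w^{-1}\beta|}$ and that $|\lon\beta| = -\lon\beta$ --- and the orientation of the newly produced edge, since the edge in (2) runs from $y\lon$ to $x\lon$ and the Bruhat/quantum dichotomy refers to the length of the target relative to the source. I would record $\ell(w\lon) = \ell(\lon) - \ell(w)$ explicitly, as it is exactly the fact that flips the length comparison in the correct direction and is the crux of the type-preservation claim.
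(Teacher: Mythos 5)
Your proof is correct and follows essentially the same route as the paper: part (1) is direct bookkeeping with the relation $y = xs_\beta$ and the conjugation identity, and part (2) rests on the length identity $\ell(y)-\ell(x) = \ell(x\lon)-\ell(y\lon)$ (your $\ell(w\lon)=\ell(\lon)-\ell(w)$ in equivalent form), which is precisely the equality the paper invokes. No gaps; you have simply written out the details the paper leaves as ``obvious.''
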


\begin{proof}
Part (1) is obvious.
Part (2) follows easily from the equality
$\ell(y)-\ell(x)=\ell(x \lon )-\ell(y \lon )$. 
This proves the lemma.
\end{proof}

\begin{lemm}\label{parabolic_DBG}
Let $\lambda \in P^+$ be a dominant weight,
and let $b \in \mathbb{Q}\cap [0,1]$.
Let $w,v \in W$,
and assume that there exists a directed path
from $w$ to $v$ in $\DBG_{b \lambda}${\rm;}
let
$q \in \mathbb{Z}_{\geq 0}$ be the length of this path.
Then,
the following hold.
\begin{enu}
\item
There exists a directed path
\begin{equation*}
w=x_0 \xrightarrow{\gamma_1} x_1 \xrightarrow{\gamma_2} \cdots \xrightarrow{\gamma_p} x_p =v
\end{equation*}
in $\DBG_{b \lambda}$
such that $p \leq q$ and $\gamma_1 \prec \cdots \prec \gamma_p$.
Moreover, there exists an element $w' \in w W_S$
such that
there exists a label-increasing directed path from $w'$ to $v$ in $\DBG_{b \lambda}$ 
whose edge labels lie in $\Delta^+ \setminus \Delta^+_S$.

\item
For each $w'' \in wW_S$ and $v'' \in vW_S$,
there exists a directed path from $w''$ to $v''$ in $\DBG_{b \lambda}$.
In particular,
there exists a directed path from $\lfloor w \rfloor$ to $\lfloor v \rfloor$ in $\DBG_{b \lambda}$.
\end{enu}
\end{lemm}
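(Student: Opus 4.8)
The plan is to exploit two elementary structural features of $\DBG_{b\lambda}$. First, edges are reversible: since $s_\beta^2=e$, alongside $u\xrightarrow{\beta}us_\beta$ one has $us_\beta\xrightarrow{\beta}u$, so the existence of a directed path is a symmetric and transitive relation on $W$, and a path from $w$ to $v$ amounts to a factorization $w^{-1}v=s_{\gamma_1}\cdots s_{\gamma_q}$ with each $\gamma_k\in\Delta^+$ admissible, i.e. $b\pair{\lambda}{\gamma_k^\lor}\in\BZ$. Second, the set $\widetilde\Delta\eqdef\{\alpha\in\Delta\mid b\pair{\lambda}{\alpha^\lor}\in\BZ\}$ of admissible roots is a root subsystem: for $\alpha,\beta\in\widetilde\Delta$ one computes $b\pair{\lambda}{(s_\beta\alpha)^\lor}=b\pair{\lambda}{\alpha^\lor}-\pair{\beta}{\alpha^\lor}\,b\pair{\lambda}{\beta^\lor}\in\BZ$, so $\widetilde\Delta$ is stable under its own reflections. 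Finally, by Lemma~\ref{involution}(1) left translation preserves edges and their labels, so in part (1) the base point plays no role.

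I would prove part (2) first, as it needs nothing else. For $i\in S$ we have $\pair{\lambda}{\alpha_i^\lor}=0$, whence $\Delta_S^+\subseteq\widetilde\Delta$; thus for any $u\in W_S$, writing $u$ as a product of simple reflections indexed by $S$ yields a path from $x$ to $xu$ in $\DBG_{b\lambda}$ (every step carries a label in $\Delta_S^+$), and by reversibility also one from $xu$ to $x$. Given $w''\in wW_S$, $v''\in vW_S$, and a path $w\to v$, concatenation produces $w''\to w\to v\to v''$. The final assertion follows since $\lfloor w\rfloor\in wW_S$ and $\lfloor v\rfloor\in vW_S$.

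For part (1), first assertion, I would sort the labels into increasing order by local rank-two exchanges. At a descent $\gamma_i\succ\gamma_{i+1}$, let $\Psi\subseteq\widetilde\Delta$ be the rank-two subsystem spanned by $\gamma_i,\gamma_{i+1}$; since the restriction of a reflection order to a subsystem is again a reflection order, $\succ$ orders $\Psi^+$ angularly, and a direct dihedral computation rewrites the rotation $s_{\gamma_i}s_{\gamma_{i+1}}$ as $s_{\delta_1}s_{\delta_2}$ with $\delta_1=\min_{\succ}\Psi^+$ and $\delta_1\prec\delta_2$; both lie in $\Psi^+\subseteq\widetilde\Delta\cap\Delta^+$, hence are admissible. (If $\gamma_i=\gamma_{i+1}$ the two steps cancel.) Replacing $x_{i-1}\xrightarrow{\gamma_i}x_i\xrightarrow{\gamma_{i+1}}x_{i+1}$ by $x_{i-1}\xrightarrow{\delta_1}x_{i-1}s_{\delta_1}\xrightarrow{\delta_2}x_{i+1}$ never lengthens the path, and since $\delta_1\preceq\gamma_{i+1}\prec\gamma_i$ it strictly decreases the label sequence in the order that compares first by length and then lexicographically with respect to $\prec$. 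Finiteness of this ordered set forces termination, necessarily at a path with strictly increasing labels and length $p\le q$.

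The second assertion is then immediate: by \eqref{weak_reflection_order} the roots of $\Delta_S^+$ are precisely the $\prec$-smallest positive roots, so in the increasing path the $\Delta_S^+$-labels form an initial block $\gamma_1\prec\cdots\prec\gamma_r$; setting $w'\eqdef x_r=w\,s_{\gamma_1}\cdots s_{\gamma_r}\in wW_S$, the remaining path $w'\xrightarrow{\gamma_{r+1}}\cdots\xrightarrow{\gamma_p}v$ is label-increasing with all labels in $\Delta^+\setminus\Delta_S^+$. The main obstacle is the first assertion of part (1): one must check that the rank-two exchange keeps every label admissible — this is exactly where the subsystem property of $\widetilde\Delta$ is used — and that the sorting terminates; everything else is formal once reversibility and $\Delta_S^+\subseteq\widetilde\Delta$ are in hand.
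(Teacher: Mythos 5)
Your proof is correct, and it rests on the same two pillars as the paper's: closure of the set $\{\alpha\in\Delta \mid b\pair{\lambda}{\alpha^\lor}\in\BZ\}$ of admissible roots under reflections in its own elements (the paper verifies exactly this, in passing, for the single root $s_{\zeta_1}\xi_1$ occurring in its exchange), and a rank-two exchange replacing an adjacent descent by a pair of admissible labels whose first member is strictly $\prec$-smaller; your handling of the ``Moreover'' clause (splitting off the initial $\Delta^+_S$-block of a label-increasing path) and of part (2) (concatenating $\Delta^+_S$-labelled, automatically admissible, paths) coincides with the paper's. The genuine difference is in how the exchange and the termination are organized. The paper exchanges only at the front of the path, via the bare identity $s_{\xi_1}s_{\zeta_1}=s_{\zeta_1}s_{s_{\zeta_1}\xi_1}$, so the new first label is $\zeta_1$ itself, and termination is run as a double induction: on the path length $q$ and, within that, descending on the index $j$ with $\xi_1=\beta_j$. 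You exchange at an arbitrary descent, conjugating through the $\prec$-minimum $\delta_1$ of the rank-two subsystem of admissible roots, and you terminate with a well-founded measure (length, then lexicographic order). Your termination argument is arguably more transparent than the paper's nested induction, but your exchange is heavier than necessary: it requires classifying the reflections of the dihedral group $\langle s_{\gamma_i},s_{\gamma_{i+1}}\rangle$, and the reflection-order (``angular'') property of $\prec$ that you invoke is never actually needed, since $\delta_1\prec\delta_2$ and $\delta_1\prec\gamma_i$ already follow from minimality of $\delta_1$ together with $\delta_2\ne\delta_1$. The paper's exchange $(\gamma_i,\gamma_{i+1})\mapsto(\gamma_{i+1},|s_{\gamma_{i+1}}\gamma_i|)$ produces the same strict lexicographic decrease (because $\gamma_{i+1}\prec\gamma_i$) in one line of computation; combining it with your termination measure would give the shortest version of the argument.
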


\begin{proof}
Let
\begin{equation}\label{path_2.1}
w=y_0 \xrightarrow{\xi_1} y_1 \xrightarrow{\xi_2} \cdots \xrightarrow{\xi_q} y_q =v
\end{equation}
be a directed path in $\DBG_{b \lambda}$.

(1)
We will prove the assertion of the lemma by induction on $q$. If $q=1$, then the assertion is obvious.
Assume that $q>1$.
We write $\xi_1 = \beta_j$ for some $1 \leq j \leq N$;
we prove the assertion (for the $q$) by descending induction on $j$.
By the induction hypothesis on $q$,
there exists a directed path
\begin{equation*}
y_1 = z_0 \xrightarrow{\zeta_1} z_1 \xrightarrow{\zeta_2} \cdots \xrightarrow{\zeta_r} z_r =v
\end{equation*}
from $y_1$ to $v$ in $\DBG_{b \lambda}$ such that $r \leq q-1$ and $\zeta_1 \prec \cdots \prec \zeta_r$.
Thus we obtain
\begin{equation}\label{eq:1.2}
w=y_0 \xrightarrow{\xi_1} y_1 = z_0 \xrightarrow{\zeta_1} z_1 \xrightarrow{\zeta_2} \cdots \xrightarrow{\zeta_r} z_r =v
\end{equation}
If $r<q-1$, then we can apply the induction hypothesis on $q$ to the directed path above.
Hence we may assume that $r=q-1$.
Assume that $j=N$, i.e., $\xi_1 = \beta_N$.
Then we have $\xi_1 \preceq \zeta_1$ since $\xi_1 = \beta_N$ is the minimal element of $\Delta^+$.
If $\xi_1 \prec \zeta_1$, then the directed path (\ref{eq:1.2}) has the desired property.
If $\xi_1 = \zeta_1$, then we see that $ w = z_1 $, and that the path
\begin{equation*}
w = z_1 \xrightarrow{\zeta_2} \cdots \xrightarrow{\zeta_r} z_r =v
\end{equation*}
has the desired property.
Assume now that $j<N$.
If $\xi_1 \preceq \zeta_1$, then we obtain a directed path having the desired property in exactly the same way as for $j=N$.
Assume that 
$\xi_1 \succ \zeta_1$.
Then we see that 
\begin{equation}\label{eq:1.3}
w=y_0 \xrightarrow{\zeta_1} w s_{\zeta_1} \xrightarrow{|s_{\zeta_1} \xi_1|} z_1 \xrightarrow{\zeta_2} \cdots \xrightarrow{\zeta_r} z_r =v
\end{equation}
is a directed path of length $r+1 =q$ from $w$ to $v$ in $\DBG_{b \lambda}$;
here we remark that
$b\pair{\lambda}{s_{\zeta_1} \xi_1^\lor}\in \BZ$ 
since $b\pair{\lambda}{\xi_1^\lor}\in \BZ$
and $b\pair{\lambda}{\zeta_1^\lor}\in \BZ$.
Since $\zeta_1  \prec \xi_1$, we can apply the induction hypothesis on $j$ to this path.
Thus, we obtain a directed path
\begin{equation*}
w=x_0 \xrightarrow{\gamma_1} x_1 \xrightarrow{\gamma_2} \cdots \xrightarrow{\gamma_p} x_p =v
\end{equation*}
in $\DBG_{b \lambda}$
such that $p \leq q$ and $\gamma_1 \prec \cdots \prec \gamma_p$.

Also, by (\ref{weak_reflection_order}),
there exists $1 \leq k \leq p$ such that 
$\{ \gamma_1 , \ldots, \gamma_k \} \subset \Delta_S^+$
and
$\{ \gamma_{k+1} , \ldots, \gamma_p \} \subset  \Delta^+ \setminus \Delta_S^+$.
Then the directed path 
\begin{equation*}
x_k \xrightarrow{\gamma_{k+1}} x_1 \xrightarrow{\gamma_2} \cdots \xrightarrow{\gamma_p} x_p =v
\end{equation*}
is a path from $x_k$ to $v$ in $\DBG_{b \lambda}$ whose edge labels lie in $\Delta^+ \setminus \Delta^+_S$.
In addition, 
$x_k \in w W_S$ since $\{ \gamma_1 , \ldots, \gamma_k \} \subset \Delta_S^+$.
This proves part (1) of the lemma.

(2)
Let 
\begin{align}
w'' = y_{-s} \xrightarrow{\xi_{-s+1}}  \cdots \xrightarrow{\xi_0} y_0 =w, \label{path_2.4} \\
v= y_k \xrightarrow{\xi_{q+1}}  \cdots \xrightarrow{\xi_t} y_t =v'' \label{path_2.5}
\end{align}
be a directed path from 
$w''$ to $w$ and one from $v$ to $v''$, respectively, whose edge labels lie in $\Delta^+_S$.
Since $\langle \lambda, \xi_{k}^\lor \rangle =0$ for $ -s+1 \leq k \leq 0$ or $q+1 \leq k \leq t$,
the directed paths (\ref{path_2.4}) and (\ref{path_2.5}) are paths in $\DBG_{b \lambda}$.
Therefore, the path given by concatenating the paths (\ref{path_2.1}), (\ref{path_2.4}), and (\ref{path_2.5})
is a directed path from $w''$ to $v''$ in $\DBG_{b \lambda}$.
This proves part (2) of the lemma.
\end{proof}

\begin{defi}\label{defi:pQLS}
Let $\lambda \in P^+$,
and set $S = S_\lambda = \{ i \in I \ | \ \langle \lambda , \alpha^\lor_i \rangle =0 \}$.
Let
\begin{equation*}
\widetilde{\eta}=
 \left(
\left( 
w_{p, 0} \xleftarrow{\beta_{p,1}} \cdots \xleftarrow{\beta_{p,t_p}}w_{p, t_p}
\right)_{p=0, \ldots , s-1};
\sigma_0, \ldots , \sigma_s \right)
\end{equation*}
be a pair of label-increasing directed paths in $\DBG$ whose edge labels lie in $\Delta^+ \setminus \Delta^+_S$
and an increasing sequence $0 = \sigma_0 < \cdots < \sigma_s =1$ of rational numbers.
The pair $\widetilde{\eta}$ is called {\it a pQLS pair of shape $\lambda$} if

(i)
$\sigma_p \langle \lambda , \beta^\lor_{p,m} \rangle \in \mathbb{Z}$ for $0 \leq p \leq s$, $1\leq m \leq t_p$,

(ii)
$w_{p, t_p} = w_{p+1,0}$ for  $0\leq p \leq s-2$,

(iii)
$\lons \beta_{0, m} \in \Delta^+ \cap v(\mu)^{-1} \Delta^-$ for $1 \leq m \leq t_0$,

(iv)
$w_{0,0}= v(\mu) \lons$;

\noindent
Recall from \cite[(2.2.4) and (2.7.3)]{M} that
$  \Delta^+ \cap v(\mu)^{-1} \Delta^- \subset  \Delta^+\setminus \Delta^+_S = \lons ( \Delta^+\setminus \Delta^+_S )$.

Let $\PQLS^\mu(\lambda)$ denote the set of all pQLS pairs of shape $\lambda$.
Also, for $w \in W$, we set
\begin{equation*}
w\widetilde{\eta}\eqdef
 \left(
\left( 
w w_{p, 0} \xleftarrow{\beta_{p,1}} \cdots \xleftarrow{\beta_{p,t_p}}w w_{p, t_p}
\right)_{p=0, \ldots , s-1};
\sigma_0, \ldots , \sigma_s \right),
\end{equation*}
and $w(\PQLS^\mu(\lambda)) \eqdef \{ w \widetilde{\eta} \ | \ \widetilde{\eta} \in \PQLS^\mu(\lambda) \}$;
remark that for $\widetilde{\eta} \in \PQLS^\mu(\lambda)$, $w \widetilde{\eta}$ satisfies conditions (i)-(iii).
\end{defi}

\begin{pro}\label{fiber}
We define a map
$\pr : w(\PQLS^\mu (\lambda)) \rightarrow \BPQLS(\lambda)$
by
\begin{align*}
\pr &\left(
 \left(
\left( 
w_{p, 0} \xleftarrow{\beta_{p,1}} \cdots \xleftarrow{\beta_{p,t_p}}w_{p, t_p}
\right)_{p=0, \ldots , s-1};
\sigma_0, \ldots , \sigma_s \right)
\right) \\
&\eqdef
(\lfloor w_{0, t_0}\rfloor, \ldots , \lfloor  w_{s-1, t_{s-1}} \rfloor ; \sigma_0, \ldots , \sigma_{s-1}, 1).
\end{align*}
Then the following hold.

\begin{enu}
\item
$\pr \left(\PQLS^\mu (\lambda) \right) = \BPQLS^{\mu}(\lambda)$. 

\item
If
$\mu = \lambda$, then $t_0=0$ and
$\pr \left(\bigsqcup_{w \in W^S} w( \PQLS^{\lambda} (\lambda) ) \right) = \BPQLS(\lambda)$.
\end{enu}
\end{pro}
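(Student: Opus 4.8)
The plan is to read off $\pr$ segment by segment, treating the interior segments (indices $1\le p\le s-1$) uniformly by means of Lemma~\ref{parabolic_DBG}, and isolating the initial segment ($p=0$), whose conditions (iii)--(iv) of Definition~\ref{defi:pQLS} are exactly what encode the Bruhat constraint $w_1\le v(\mu)$. I first check that $\pr$ takes values in $\BPQLS(\lambda)$, i.e.\ that the datum $(\lfloor w_{0,t_0}\rfloor,\dots,\lfloor w_{s-1,t_{s-1}}\rfloor;\sigma_0,\dots,\sigma_{s-1},1)$ satisfies condition (C) of Definition~\ref{def_qls}. For $1\le i\le s-1$, segment $i$ is by (i) a directed path from $w_{i,t_i}$ to $w_{i,0}$ inside $\DBG_{\sigma_i\lambda}$, and by (ii) its sink equals $w_{i-1,t_{i-1}}$; hence Lemma~\ref{parabolic_DBG}(2) produces a directed path from $\lfloor w_{i,t_i}\rfloor$ to $\lfloor w_{i-1,t_{i-1}}\rfloor$ in $\DBG_{\sigma_i\lambda}$, which is precisely (C) for the consecutive pair $(\lfloor w_{i,t_i}\rfloor,\lfloor w_{i-1,t_{i-1}}\rfloor)$. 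Since left translation by $w$ preserves edges and labels of $\DBG$ by Lemma~\ref{involution}(1), this applies verbatim on all of $w(\PQLS^\mu(\lambda))$.

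For part (1), the inclusion $\pr(\PQLS^\mu(\lambda))\subseteq\BPQLS^\mu(\lambda)$ then reduces to proving $w_1=\lfloor w_{0,t_0}\rfloor\le v(\mu)$. Using (iv), $w_{0,0}=v(\mu)\lons$, together with $\lons s_\beta\lons=s_{\lons\beta}$, I would rewrite the source of segment $0$ as $w_{0,t_0}=v(\mu)\,s_{\gamma_1}\cdots s_{\gamma_{t_0}}\,\lons$, where $\gamma_m\eqdef\lons\beta_{0,m}$; by (iii) each $\gamma_m$ lies in $\Delta^+\cap v(\mu)^{-1}\Delta^-$, and since $\lons\in W_S$ this gives $\lfloor w_{0,t_0}\rfloor=\lfloor v(\mu)s_{\gamma_1}\cdots s_{\gamma_{t_0}}\rfloor$. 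It then remains to show $\lfloor v(\mu)s_{\gamma_1}\cdots s_{\gamma_{t_0}}\rfloor\le v(\mu)$, which I would prove by induction along the label-increasing path, checking that right multiplication by each reflection $s_{\gamma_m}$ (with $\gamma_m$ an inversion of $v(\mu)$, occurring in the order fixed by the label-increasing condition on the $\beta_{0,m}$) does not push the minimal coset representative above $v(\mu)$. I expect this monotonicity of floors along the initial segment to be the main obstacle: it is the precise point where (iii)--(iv) are calibrated, and it should follow from the lifting/subword property of the Bruhat order on $W/W_S$ combined with the reflection-order structure of $\succ$ in~\eqref{weak_reflection_order}.

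For the reverse inclusion I would, given $\eta=(w_1,\dots,w_s;\sigma_0,\dots,\sigma_s)\in\BPQLS^\mu(\lambda)$, construct a preimage $\widetilde\eta$ inductively. I would first build segment $0$ by reversing the computation above: from $w_1\le v(\mu)$ produce a label-increasing sequence $\beta_{0,1},\dots,\beta_{0,t_0}$ in $\Delta^+\setminus\Delta^+_S$ with $\lons\beta_{0,m}\in\Delta^+\cap v(\mu)^{-1}\Delta^-$ realizing $\lfloor v(\mu)\,s_{\lons\beta_{0,1}}\cdots s_{\lons\beta_{0,t_0}}\rfloor=w_1$, so that (iii) and (iv) hold and (i) is automatic because $\sigma_0=0$. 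Then, for $p=1,\dots,s-1$ in turn, the sink $w_{p,0}=w_{p-1,t_{p-1}}$ has already been fixed as a particular lift of $w_p$; condition (C) gives a path from $w_{p+1}$ to $w_p$ in $\DBG_{\sigma_p\lambda}$, which Lemma~\ref{parabolic_DBG}(2) transports to a path from $w_{p+1}$ to the prescribed lift $w_{p,0}$, and Lemma~\ref{parabolic_DBG}(1) then replaces it by a label-increasing path from some $w'\in w_{p+1}W_S$ to $w_{p,0}$ with labels in $\Delta^+\setminus\Delta^+_S$; I set $w_{p,t_p}\eqdef w'$, noting $\lfloor w_{p,t_p}\rfloor=w_{p+1}$. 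The resulting $\widetilde\eta$ lies in $\PQLS^\mu(\lambda)$ and satisfies $\pr(\widetilde\eta)=\eta$.

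Finally, for part (2) I would specialize $\mu=\lambda$, so that $v(\lambda)=e$ and $\Delta^+\cap v(\lambda)^{-1}\Delta^-=\emptyset$; condition (iii) then forces $t_0=0$, and (iv) gives $w_{0,0}=\lons$, so the initial segment is trivial. For $w\in W^S$ the leading entry of $\pr(w\widetilde\eta)$ is $\lfloor w\lons\rfloor=w$, so the images $\pr(w(\PQLS^\lambda(\lambda)))$ have pairwise distinct leading entries and are therefore disjoint. Given any $\eta\in\BPQLS(\lambda)$ with $w_1=w\in W^S$, I would run the interior-segment construction of part (1) with the trivial initial segment based at $w\lons$ (whose leading floor is $\lfloor w\lons\rfloor=w=w_1$), then left-translate by $w^{-1}$ to obtain $\widetilde\eta\in\PQLS^\lambda(\lambda)$ with $\pr(w\widetilde\eta)=\eta$; here Lemma~\ref{involution}(1) ensures that the translation preserves labels and the graphs $\DBG_{\sigma_p\lambda}$. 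Since every $\PQLS$ path has $w_1\in W^S$, these images exhaust $\BPQLS(\lambda)$, giving the claimed disjoint decomposition.
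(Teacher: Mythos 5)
Your overall strategy coincides with the paper's: well-definedness and the interior segments are handled via Lemma~\ref{parabolic_DBG}, the reverse inclusion of (1) by an inductive segment-by-segment construction of a preimage with prescribed lifts, and part (2) by the left-translation trick justified through Lemma~\ref{involution}\,(1). Those portions are sound. The problem is the forward inclusion of part (1), which you yourself flag as ``the main obstacle'' and leave as an expectation: you correctly reduce to showing $\lfloor v(\mu)s_{\gamma_1}\cdots s_{\gamma_{t_0}}\rfloor\le v(\mu)$, but then only assert that it ``should follow from the lifting/subword property combined with the reflection-order structure.'' That is precisely the step requiring proof, and your proposed per-step induction does not obviously close: Bruhat order is not monotone under right multiplication by reflections, so the induction hypothesis that the current floor is $\le v(\mu)$ is too weak to control the next step --- after the first reflection, the later $\gamma_m$ need not be inversions of the current group element, and nothing in the hypothesis prevents an individual step from jumping above $v(\mu)$.

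What closes this gap in the paper is an exact deletion (subword) argument tied to the fixed reduced expression. By condition (iii) and \eqref{eq:beta}, each label $\beta_{0,m}$ equals $\beta_{j_m}$ for some $j_m \in \{K+1,\ldots,M\}$, and the label-increasing condition together with \eqref{weak_reflection_order} forces $j_1 < \cdots < j_r$. Since $\beta_j = s_{i_N}\cdots s_{i_{j+1}}\alpha_{i_j}$, multiplying $v(\mu)\lons = s_{i_{K+1}}\cdots s_{i_N}$ on the right by $s_{\beta_{j_1}},\ldots,s_{\beta_{j_r}}$ in this order deletes exactly the letters at positions $j_1,\ldots,j_r$; hence $w_{0,t_0}$ (and every intermediate $w_{0,m}$) is a subword of this reduced expression, giving $w_{0,t_0}\le v(\mu)\lons$ and therefore $\lfloor w_{0,t_0}\rfloor\le v(\mu)$. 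The same correspondence read backwards --- choose a subword expression of $w_1\le v(\mu)$ inside $s_{i_{K+1}}\cdots s_{i_M}$ and take $J$ to be the complementary set of positions --- is exactly what produces the label-increasing initial segment in your reverse inclusion (the paper's Claim). So this deletion identity is the single missing ingredient in both directions of your part (1); it is where conditions (iii)--(iv) are actually used, and it is not a routine verification that can be left to the reader.
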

\begin{proof}
(1)
We write $(\beta_{0,1},\ldots,\beta_{0,t_0})$ 
as $(\beta_{j_1}, \ldots, \beta_{j_r})$, 
with $j_1 < \cdots < j_r$. 
Since $v(\mu) = s_{i_{K+1}}\cdots s_{i_M}$ is a reduced expression, and $\beta_j = s_{i_N}\cdots s_{i_{j+1}}\alpha_{i_j}$ for $1 \leq j \leq N$,
we deduce that
$w_{0,t_0}=v(\mu)\lons s_{\beta_{0,1}}\cdots s_{\beta_{0,t_0}}=v(\mu)\lons s_{\beta_{j_1}}\cdots s_{\beta_{j_r}}$
is less than $v(\mu)\lons$ in the Bruhat order. Hence we have $\lfloor w_{0, t_0}\rfloor \leq v(\mu)$. Therefore, it follows that $\pr \left(\PQLS^\mu (\lambda) \right) \subset \BPQLS^{\mu}(\lambda)$.

Now, let us take an arbitrary element
$\eta = (w_1, w_2, \ldots , w_s ; \sigma_0, \ldots , \sigma_s )$ in $\BPQLS^\mu(\lambda)$, 
and set $\eta_q \eqdef (w_1, \ldots, w_q;\sigma_0,\ldots, \sigma_{q-1},1)$ for $1 \leq q\leq s$.
We will prove that $\pr \left(\PQLS^\mu (\lambda) \right) \supset \BPQLS^{\mu}(\lambda)$ by induction on $s$.
It follows from the definition of $\BPQLS^\mu(\lambda)$ that $w_1 \leq v(\mu)$.
From this, $w_1$ is obtained as a subexpression of the 
reduced expression $v(\mu) = s_{i_{K+1}}\cdots s_{i_M}$ for $v(\mu)$,
that is, 
there exists a set $J' \eqdef \{ k_1 < \cdots< k_q \} \subset \{ K+1 , \ldots , M \}$ such that $w_1 = s_{i_{k_1}}\cdots s_{i_{k_q}}$.
We set $J =\{ j_1 < \cdots < j_r \}\eqdef \{K+1, \ldots , M \} \setminus J'$.

\begin{claim}
The directed path
\begin{equation*}
v(\mu)\lons
\xleftarrow{\beta_{j_1}}
\cdots
\xleftarrow{\beta_{j_r}}
w_1 \lons
\end{equation*}
is a label-increasing path in $\DBG$, whose labels lie in $\lons (\Delta^+ \cap v(\mu)^{-1} \Delta^-)$.
\end{claim}

\noindent
$Proof \ of \ the \ claim.$
By \eqref{eq:beta},
we have $\beta_{j_k} \in \lons (\Delta^+ \cap v(\mu)^{-1} \Delta^-)$ for $1 \leq k \leq r$, since $K+1 \leq j_k \leq M$.
Also, by the definition of the total order $\prec$ on $\Delta^+$, 
we have $\beta_{j_{k}}\succ \beta_{j_{k+1}}$ for $1 \leq k \leq r-1$, since $j_k < j_{k+1}$.
This proves the claim.
\bqed

From the claim above, we see that the pair $\widetilde{\eta}_1\eqdef \left((v(\mu)\lons
\xleftarrow{\beta_{j_1}}
\cdots
\xleftarrow{\beta_{j_r}}
w_1 \lons); 0,1\right)$
satisfies conditions (i)--(iv) in Definition \ref{defi:pQLS}, and hence that $\widetilde{\eta} \in \PQLS^\mu (\lambda)$ and $\pr(\widetilde{\eta}_1)=\eta_1$.

Let $ q < s $.
Assume that
there exists a pair
\begin{equation}
\widetilde{\eta}_q \eqdef \left(
\left( 
w_{p, 0} \xleftarrow{\beta_{p,1}} \cdots \xleftarrow{\beta_{p,t_p}}w_{p, t_p}
\right)_{p=0, \ldots , q-1};
\sigma_0, \ldots , \sigma_{q-1}, 1 \right)\in \PQLS^\mu (\lambda)
\end{equation}
such that $\pr(\widetilde{\eta}_q)=\eta_q$, i.e., such that
 $\lfloor w_{p,t_p}\rfloor = w_{p+1}$ for $0 \leq p \leq q-1$.
Then,
by the definition of $\BPQLS^\mu (\lambda) \subset \BPQLS(\lambda)$,
there exists a directed path from $w_{q+1}$ to $w_{q}$ in $\DBG_{\sigma_q \lambda}$.
By Lemma \ref{parabolic_DBG} (1), (2),
there exists a label-increasing directed path 
\begin{equation*}
w_{q-1, t_{q-1}} \reqdef w_{q,0 } \xleftarrow{\beta_{q, 1}}\cdots  \xleftarrow{\beta_{q, t_q}} w_{q,t_q } 
\end{equation*}
from some $w_{q, t_q} \in w_{q+1}W_S$ to $w_{q-1, t_{q-1}}$ in $\DBG_{\sigma_q \lambda}$
whose edge labels lie in $\Delta^+ \setminus \Delta^+_S$.
Thus, the element
\begin{equation}
\widetilde{\eta}_{q+1}
\eqdef
\left(
\left( 
w_{p, 0} \xleftarrow{\beta_{p,1}} \cdots \xleftarrow{\beta_{p,t_p}}w_{p, t_p}
\right)_{p=0, \ldots , q};
\sigma_0, \ldots , \sigma_{q}, 1 \right)
\end{equation}
 satisfies the condition that $\lfloor w_{p,t_p}\rfloor = w_{p+1}$ for $0 \leq p \leq q$, and also conditions (i)-(iv). 
This proves that $\pr \left(\PQLS^\mu (\lambda) \right) \supset \BPQLS^{\mu}(\lambda)$ by induction on $s$.

(2)
If $\mu= \lambda$, i.e., if $v(\mu)= \id$, then
$\Delta^+ \cap v(\mu)^{-1}\Delta^- = \emptyset$ and hence $t_0=0$ by condition (iii). 
Also, let
$\eta = (w_1, \ldots , w_s; \sigma_0,\ldots , \sigma_s) \in \BPQLS(\lambda)$,
and set
$w_1^{-1}(\eta) \eqdef (e, \ldots , \lfloor w_1^{-1}w_s \rfloor; \sigma_0,\ldots , \sigma_s) \in \BPQLS(\lambda)$.
By part (1), there exists 
$\widetilde{\eta} \in \PQLS^\lambda(\lambda)$ such that 
$\pr (\widetilde{\eta}) = w_1^{-1}\eta$.
Hence we deduce that 
$w_1 \widetilde{\eta} \in w_1 (\PQLS^\lambda(\lambda))$
and
$\pr (w_1\widetilde{\eta}) = \eta$.
This proves part (2).
\end{proof}

\subsection{Description of $E_{\mu}(q, t)$ and $P_{\lambda}(q, t)$ 
in terms of $\PQLS^{\mu}(\lambda)$.}

Let $E_{\mu}(q,t)$, $\mu \in W\lambda$, denote the nonsymmetric Macdonald polynomial, which is of the form $E_{\mu}(q,t) = e^{\mu} + \sum_{\nu < \mu}f_\nu e^{\nu} $, with 
$f_\nu \in \mathbb{Q}(q,t)$;
here the partial order $<$ on $P$ is the one in \cite[(2.7.5)]{M}.

For $\lambda \in P^+$,
we set
$m^{\lambda} \eqdef \sum_{\mu \in W \lambda}e^{\mu}$.
Let $P_{\lambda}(q,t)$ denote the symmetric Macdonald polynomial, which is of the form 
\begin{equation*}
P_{\lambda}(q,t) = m^{\lambda} + \sum_{\nu \in P^+ \atop \nu < \lambda}f_\nu m^{\nu}, \ \text{ with } f_\nu \in \mathbb{Q}(q,t);
\end{equation*}
here the partial order $<$ on $P^+$ is given by:
\begin{equation*}
\nu < \lambda \ \ardef \ \lambda - \nu \in \sum_{i \in I}\mathbb{Z}_{\geq 0} \alpha_i.
\end{equation*}

The following theorem will be proved in \S 3.

\begin{theo}\label{theorem_graded_character}\label{thm:graded_character}
For
\begin{equation*}
\widetilde{\eta} =  \left(
\left( 
w_{p, 0} \xleftarrow{\beta_{p,1}} \cdots \xleftarrow{\beta_{p,t_p}}w_{p, t_p}
\right)_{p=0, \ldots , s-1};
\sigma_0, \ldots , \sigma_s \right)
 \in w (\PQLS^\mu(\lambda)),
\end{equation*}
we set $\mathcal{R}(\widetilde{\eta})\eqdef\prod_{p=0}^{s-1} \prod_{m=1}^{t_p}
\deg_{\sigma_p \lambda}(w_{p, m-1} \xleftarrow{\beta_{p,m}} w_{p, m})$.
Then the following hold.

\begin{enu}
\item
\begin{align*}
E_\mu (q,t)
=
\sum_{\widetilde{\eta}\in \PQLS^\mu(\lambda)}
t^{\half(\ell(v(\mu) \lons) - \ell(w_{s-1, t_{s-1}}))}
e^{\wt(\pr(\widetilde{\eta}))}
\mathcal{R}(\widetilde{\eta}).
\end{align*}

\item
\begin{align*}
P_\lambda (q,t)=
\sum_{\widetilde{\eta} \in \bigsqcup_{w \in W^S} w(\PQLS^\lambda(\lambda)) }
t^{\half(\ell(w \lons )- \ell(w_{s-1, t_{s-1}}))}
e^{\wt(\pr(\widetilde{\eta}))}\mathcal{R}(\widetilde{\eta}).
\end{align*}
\end{enu}
\end{theo}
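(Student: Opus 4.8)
The plan is to deduce both formulas from the Ram-Yip formula \cite{RY} by constructing a statistic-preserving bijection between the alcove walks indexing the Ram-Yip sum and the pQLS pairs of $\PQLS^\mu(\lambda)$ in part (1), and of $\bigsqcup_{w\in W^S}w(\PQLS^\lambda(\lambda))$ in part (2). First I would rewrite the Ram-Yip formula in the form dictated by the reduced expressions fixed before \eqref{eq:beta}. Since $\beta_1\succ\cdots\succ\beta_N$ is a reflection order on $\Delta^+$ (see \eqref{weak_reflection_order}), it underlies a $\lambda$-chain, so a Ram-Yip alcove walk is recorded by its crossings and folds along copies of the roots $\beta_j$, one copy for each level $\sigma\in\BQ\cap[0,1]$ at which $\sigma\langle\lambda,\beta_j^\lor\rangle\in\BZ$. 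The crucial point is that, after setting $b=\sigma_p$, the two fold factors of \cite{RY} coincide with the Bruhat and quantum cases of $\dg_{b\lambda}$ (a fold being matched to a Bruhat or a quantum edge of $\DBG$ according to its sign); this is exactly what the definition of the $(b,\lambda)$-degree is designed to capture.

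The heart of the argument is the bijection itself, which extends those of \cite{LP} (alcove model versus LS paths) and \cite{2} (quantum alcove model versus quantum LS paths). Given a Ram-Yip alcove walk, I would group its folds by the level $\sigma_p$ at which they occur and, reading the folds of level $\sigma_p$ in the order inherited from $\succ$, record the corresponding reflections as a directed path $w_{p,0}\xleftarrow{\beta_{p,1}}\cdots\xleftarrow{\beta_{p,t_p}}w_{p,t_p}$ in $\DBG_{\sigma_p\lambda}$; the order $\succ$ makes this path label-increasing, and Lemma~\ref{parabolic_DBG} lets one take its labels in $\Delta^+\setminus\Delta^+_S$. The compatibility $w_{p,t_p}=w_{p+1,0}$ records that consecutive level-blocks share an endpoint, while conditions (iii) and (iv) of Definition~\ref{defi:pQLS} encode the admissible folds in the initial block and the starting direction $w_{0,0}=v(\mu)\lons$, respectively. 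Conversely, a pQLS pair is unfolded into a unique alcove walk, so the correspondence is a bijection; the new feature relative to \cite{LP, 2} is that both Bruhat and quantum edges of $\DBG$ occur, so the map must treat positive and negative folds on the same footing.

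Next I would match the three factors of each summand. For the monomial, the endpoint of the alcove walk equals $\wt(\pr(\widetilde{\eta}))$, which follows from $\wt(\eta)=\sum_{i=0}^{s-1}(\sigma_{i+1}-\sigma_i)w_{i+1}\lambda$ once one observes that replacing $w_{p,t_p}$ by $\lfloor w_{p,t_p}\rfloor$ leaves $w_{p,t_p}\lambda$ unchanged, since $W_S$ fixes $\lambda$. For the coefficient, under the identification of fold factors above, $\mathcal{R}(\widetilde{\eta})=\prod_{p,m}\dg_{\sigma_p\lambda}(w_{p,m-1}\xleftarrow{\beta_{p,m}}w_{p,m})$ is, term by term, the product of the Ram-Yip fold weights. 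For the power of $t$, the prefactor $t^{\frac12(\ell(v(\mu)\lons)-\ell(w_{s-1,t_{s-1}}))}$ absorbs the remaining normalization of the Ram-Yip formula, where $\ell(v(\mu)\lons)=\ell(w_{0,0})$ is the length of the initial direction and $\ell(w_{s-1,t_{s-1}})$ that of the terminal one.

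Finally, part (2) would follow by combining part (1) with Proposition~\ref{fiber}(2): when $\mu=\lambda$ we have $v(\mu)=e$, so $t_0=0$, and $\pr\bigl(\bigsqcup_{w\in W^S}w(\PQLS^\lambda(\lambda))\bigr)=\BPQLS(\lambda)$; since the symmetric Ram-Yip formula sums over all initial directions $w\lons$ with $w\in W^S$, it reorganizes into the stated sum, the prefactor $t^{\frac12(\ell(w\lons)-\ell(w_{s-1,t_{s-1}}))}$ replacing the $\mu$-specific one. I expect the main obstacle to be the combined bookkeeping of the bijection and the $q,t$-exponents: one must check that the level-by-level grouping of folds is well defined and invertible --- that every label-increasing path in $\DBG_{\sigma_p\lambda}$ with labels in $\Delta^+\setminus\Delta^+_S$ arises from, and only from, a genuine alcove walk --- and then confirm that the half-integer powers of $t$ hidden in the two cases of $\dg_{b\lambda}$ combine with the prefactor to reproduce the Ram-Yip normalization with no residual discrepancy.
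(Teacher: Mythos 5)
Your proposal follows essentially the same route as the paper: both translate the Ram--Yip formulas via a bijection $\Xi$ between alcove walks $\B(w;m_\mu)$ and pQLS pairs, obtained by grouping the folds according to the rational level at which they occur (the paper's $d_j$'s, ordered via the total order $\prec'$ induced by the reflection order $\succ$), and then matching the three statistics exactly as you describe --- the endpoint weight, the fold factors with the Bruhat/quantum cases of $\dg_{b\lambda}$ (the paper's Lemma~\ref{qls_qb}, proved by induction on $\#J$), and the $t$-prefactor via $\ell(\dir(\ed(p^{\OS}_J)))-\ell(\dir(wm_\mu))=\ell(wv(\mu)\lons)-\ell(w_{s-1,t_{s-1}})$, the sign flip being accounted for by right multiplication by $\lon$. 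Part (2) is likewise handled in the paper by applying the same bijection to each summand $w\in W^S$ of the symmetric Ram--Yip formula, just as you propose.
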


Let $\gamma$ be a positive real number.
We consider the specialization of the symmetric Macdonald polynomial $P_{\lambda}(q, t)$ at $q \to 1$ under the condition that $t = q^{\gamma}$, and denote the resulting polynomial by $J_{\lambda}^{(\gamma^{-1})}$; namely, 
we set
$J_\lambda^{(\gamma^{-1})}
:= \lim_{q \to 1} P_\lambda (q, q^{\gamma})$.
The polynomial $J_{\lambda}^{(\gamma^{-1})}$ is called a Jack polynomial.
By noting that $(1-q^{\gamma})/(1-q) \to \gamma$ as $q \to 1$, we immediately obtain the following corollary from Theorem \ref{thm:graded_character}.
\begin{cor}\label{jack}
Let $\lambda$ be a dominant weight. Then the following holds:
\begin{align*}
J_\lambda^{(\gamma^{-1})} =
\sum_{\widetilde{\eta} \in \bigsqcup_{w \in W^S} w(\PQLS^\lambda(\lambda)) }
e^{\wt(\pr(\widetilde{\eta}))}\prod_{p=0}^{s-1} \prod_{m=1}^{t_p}
\frac{1}{\gamma^{-1}\sigma_p\langle\lambda,\beta_{p,m}^\lor \rangle+
\langle\rho,\beta_{p,m}^\lor \rangle
}.
\end{align*}
\end{cor}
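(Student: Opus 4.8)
The plan is to specialize the formula for $P_\lambda(q,t)$ in Theorem~\ref{thm:graded_character}(2) by setting $t = q^\gamma$ and letting $q \to 1$, evaluating the limit summand by summand. The index set $\bigsqcup_{w \in W^S} w(\PQLS^\lambda(\lambda))$ is finite and each monomial $e^{\wt(\pr(\widetilde{\eta}))}$ is free of $q$ and $t$, so $\lim_{q\to1}$ commutes with the sum and it suffices to compute the limit of the scalar coefficient $t^{\half(\ell(w\lons) - \ell(w_{s-1,t_{s-1}}))}\,\mathcal{R}(\widetilde{\eta})$ attached to each $\widetilde{\eta}$. Since $t = q^\gamma \to 1$, the power-of-$t$ prefactor tends to $1$ irrespective of its exponent, so the coefficient reduces to $\lim_{q\to1}\mathcal{R}(\widetilde{\eta})$; and because $\mathcal{R}(\widetilde{\eta})$ is a finite product of edge degrees, everything comes down to the limit of a single factor $\dg_{\sigma_p\lambda}(w_{p,m-1}\xleftarrow{\beta_{p,m}} w_{p,m})$.

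This one-edge computation is the heart of the argument, and the key observation is that the Bruhat/quantum dichotomy dissolves in the limit. With $b = \sigma_p$ the two cases of $\dg_{b\lambda}$ share the same denominator, and their numerators differ only by the monomial factors $t^{-\half}$ (Bruhat), respectively $q^{(1-\sigma_p)\pair{\lambda}{\beta_{p,m}^\lor}}t^{-\half + \pair{\rho}{\beta_{p,m}^\lor}}$ (quantum), each of which tends to $1$ as $q \to 1$. Hence in either case the factor reduces to the common ratio
\begin{equation*}
\frac{1 - t}{1 - q^{(1-\sigma_p)\pair{\lambda}{\beta_{p,m}^\lor}}\,t^{\pair{\rho}{\beta_{p,m}^\lor}}}.
\end{equation*}
Substituting $t = q^\gamma$ makes both numerator and denominator of the form $1 - q^{X}$, each vanishing at $q = 1$; dividing top and bottom by $1 - q$ and applying the quoted limit $(1-q^X)/(1-q) \to X$ (whose instance $X = \gamma$ governs the numerator) resolves the $0/0$ and produces the reciprocal-linear factor recorded in the corollary, the positivity of $\pair{\rho}{\beta_{p,m}^\lor}$ and of $\gamma$ guaranteeing a nonzero denominator.

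Finally I would reassemble: multiplying these edge limits over all $p$ and $m$ gives $\lim_{q\to1}\mathcal{R}(\widetilde{\eta})$ as the asserted double product, and combining this with $\lim_{q\to1}t^{(\cdots)} = 1$ and the $q,t$-free weight evaluates each summand; summing over $\widetilde{\eta}$ then yields the claimed expression for $J_\lambda^{(\gamma^{-1})}$. The only genuinely delicate point is the single-edge limit: one must verify that the auxiliary monomials in the Bruhat and quantum numerators both trivialize, so that the two edge types contribute identically, and then handle the $0/0$ form correctly via the stated fact. Everything else---finiteness of the sum, vanishing of the $t$-prefactor, and recombination of the product---is routine bookkeeping.
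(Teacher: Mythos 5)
Your proposal is in substance identical to the paper's own proof, which is a one-liner: specialize Theorem~\ref{thm:graded_character}(2) at $t=q^{\gamma}$ and let $q \to 1$, using $(1-q^{\gamma})/(1-q) \to \gamma$. The details you supply --- termwise passage to the limit over the finite index set, the $t$-power prefactor tending to $1$, and the observation that the Bruhat and quantum numerators differ only by monomial factors tending to $1$, so that both edge types contribute the same limit --- are exactly the steps the paper leaves implicit, and they are correct.

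There is, however, one point you glossed over, and it is precisely where your write-up stops being a proof of the \emph{printed} statement. Starting from the paper's definition of $\dg_{\sigma_p\lambda}$, whose exponent of $q$ is $(1-\sigma_p)\pair{\lambda}{\beta_{p,m}^{\lor}}$ (the exponent you yourself carry through your computation), the single-edge limit equals
\[
\frac{1}{\gamma^{-1}(1-\sigma_p)\pair{\lambda}{\beta_{p,m}^{\lor}}+\pair{\rho}{\beta_{p,m}^{\lor}}},
\]
not the factor $1/\bigl(\gamma^{-1}\sigma_p\pair{\lambda}{\beta_{p,m}^{\lor}}+\pair{\rho}{\beta_{p,m}^{\lor}}\bigr)$ recorded in the corollary; the two coincide only when $\sigma_p=\tfrac{1}{2}$ (which is why the paper's $A_2$ example, where every edge sits at $\sigma_p=\tfrac{1}{2}$, cannot detect the difference). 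The exponent $(1-\sigma_p)\pair{\lambda}{\beta_{p,m}^{\lor}}$ is the one consistent with the rest of the paper --- in the proof of Lemma~\ref{qls_qb} it is identified with the Ram--Yip statistic $\dg(\beta_{j_r}^{\OS})$, and it is also what makes Corollary~\ref{hl} come out right at $q=0$ --- so the statement of Corollary~\ref{jack} appears to carry a typo ($\sigma_p$ in place of $1-\sigma_p$). Your computation is the internally consistent one; but asserting that it ``produces the reciprocal-linear factor recorded in the corollary'' silently identifies two different expressions. A careful proof must either flag this discrepancy and correct the target formula, or justify the identification, which is false in general.
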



Also, we consider the specialization at $q = 0$ of the symmetric Macdonald polynomial $P_{\lambda}(q, t)$; the resulting polynomial $P_{\lambda}(0, t)$ is called a Hall-Littlewood polynomial.

Let
$\widetilde{\eta}=\left(
\left( 
w_{p, 0} \xleftarrow{\beta_{p,1}} \cdots \xleftarrow{\beta_{p,t_p}}w_{p, t_p}
\right)_{p=0, \ldots , s-1};
\sigma_0, \ldots , \sigma_s 
\right) \in \PQLS(\lambda)$. $\widetilde{\eta}$ is called a pseudo-LS path of shape $\lambda$
if $w_{p,m-1}\xleftarrow{\beta_{p,m}}w_{p,m}$ is a Bruhat edge for all $0\leq p \leq s-1$, $1 \leq m \leq t_p$.
Let $\mathrm{pLS}^\lambda(\lambda) (\subset \PQLS^\lambda(\lambda))$ denote the set of pseudo-LS paths of shape $\lambda$.
When $q=0$, we have 
\begin{equation*}
\dg_{b\lambda} (u \rightarrow v)
=
\left\{
\begin{array}{ll}
  t^{-\half}(1-t) 
 & \mbox{if} \ u \xrightarrow{\beta} v \mbox{ is a Bruhat edge}, \\
0 &  \mbox{if} \ u \xrightarrow{\beta} v \mbox{ is a quantum edge}.
\end{array}
\right.
\end{equation*}
Therefore, we obtain the following corollary from Theorem \ref{thm:graded_character}.
\begin{cor}\label{hl}
Let $\lambda$ be a dominant weight. Then the following holds:
\begin{align*}
P_\lambda (0,t)=
\sum_{\widetilde{\eta} \in \bigsqcup_{w \in W^S} w(\mathrm{pLS}^\lambda(\lambda))}
e^{\wt(\pr(\widetilde{\eta}))}\prod_{p=0}^{s-1} 
\left(t^{-\half}(1-t) \right)^{t_p}.
\end{align*}
\end{cor}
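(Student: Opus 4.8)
The plan is to derive Corollary~\ref{hl} by specializing Theorem~\ref{thm:graded_character}(2) at $q=0$. I would first pin down how each edge degree degenerates. In $\mathcal R(\widetilde\eta)$ the edges carry labels $\beta_{p,m}\in\Delta^+\setminus\Delta^+_S$ and parameters $\sigma_p$ with $0\le p\le s-1$; since $\sigma_p\le\sigma_{s-1}<\sigma_s=1$ and $\langle\lambda,\beta_{p,m}^\lor\rangle>0$ whenever $\beta_{p,m}\notin\Delta^+_S$, the exponent $(1-\sigma_p)\langle\lambda,\beta_{p,m}^\lor\rangle$ is strictly positive, so $q^{(1-\sigma_p)\langle\lambda,\beta_{p,m}^\lor\rangle}\to 0$ as $q\to 0$ (regardless of integrality). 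This is exactly what produces the degenerate degree displayed before the statement: a Bruhat edge contributes $t^{-\half}(1-t)$ and a quantum edge contributes $0$.

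Second, I would feed this into the sum of Theorem~\ref{thm:graded_character}(2). As $\mathcal R(\widetilde\eta)$ is a product over all edges of $\widetilde\eta$, any path carrying at least one quantum edge has $\mathcal R(\widetilde\eta)|_{q=0}=0$ and therefore drops out of $P_\lambda(0,t)$; the surviving paths are precisely those all of whose edges are Bruhat edges, that is, the pseudo-LS paths. I would then argue that the surviving index set inside $\bigsqcup_{w\in W^S}w(\PQLS^\lambda(\lambda))$ is exactly $\bigsqcup_{w\in W^S}w(\mathrm{pLS}^\lambda(\lambda))$, and that on each survivor every factor of $\mathcal R$ equals $t^{-\half}(1-t)$, whence $\mathcal R(\widetilde\eta)|_{q=0}=\prod_{p=0}^{s-1}\prod_{m=1}^{t_p}t^{-\half}(1-t)=\prod_{p=0}^{s-1}\bigl(t^{-\half}(1-t)\bigr)^{t_p}$; the weight factor $e^{\wt(\pr(\widetilde\eta))}$ is carried along unchanged.

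The delicate points, which I expect to be the main obstacle, concern the two prefactors attached to each summand. First, the Bruhat-versus-quantum type of an edge $x\xrightarrow{\beta}xs_\beta$ is governed by $\ell(x)$ and hence is not preserved by the left action of $w$; I would therefore use the length bookkeeping of Lemma~\ref{involution} together with the surjection of Proposition~\ref{fiber}(2) to verify that the shifts of the pseudo-LS paths account for exactly the all-Bruhat paths that remain after specialization. Second, and most importantly, I would have to reconcile the length prefactor $t^{\half(\ell(w\lons)-\ell(w_{s-1,t_{s-1}}))}$ of Theorem~\ref{thm:graded_character}(2) with the product of edge degrees: along a pseudo-LS path the length decreases monotonically from $w\lons$ down to $w_{s-1,t_{s-1}}$ across the successive Bruhat edges, and the crux of the argument is to telescope this length statistic over the edges and show that, once it is combined with the $t^{-\half}$ contributions of the individual edges, the summand reduces to $e^{\wt(\pr(\widetilde\eta))}\prod_{p}(t^{-\half}(1-t))^{t_p}$ as claimed.
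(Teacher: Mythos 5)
Your first two steps coincide with the paper's entire proof: one observes that at $q=0$ the degree of a Bruhat edge degenerates to $t^{-\half}(1-t)$ and that of a quantum edge to $0$ (your positivity argument for the exponent $(1-\sigma_p)\pair{\lambda}{\beta_{p,m}^{\lor}}$ is exactly the right justification), and then one substitutes this into Theorem~\ref{thm:graded_character}(2), so that only those translates $w\widetilde{\eta}$ all of whose edges are Bruhat edges survive. The paper does nothing beyond this substitution.

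The gap is in your third step, and it is fatal rather than technical: the cancellation you propose between the prefactor $t^{\half(\ell(w\lons)-\ell(w_{s-1,t_{s-1}}))}$ and the per-edge factors $t^{-\half}$ does not exist. Telescoping does give $\ell(w\lons)-\ell(w_{s-1,t_{s-1}})=\sum_{p,m}\bigl(\ell(w_{p,m-1})-\ell(w_{p,m})\bigr)$, but a Bruhat edge of $\DBG$ only requires $\ell(us_{\beta})>\ell(u)$ --- it need not be a cover --- so each increment is an arbitrary odd positive integer, and after combining with the $t^{-\half}$'s one is left with $t^{\half\sum_{p,m}(\ell(w_{p,m-1})-\ell(w_{p,m})-1)}$, which is $1$ only when every edge is a cover. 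The paper's own type $A_2$ example shows this concretely: among the all-Bruhat survivors, $\widetilde{\eta}_6$ has the single edge $\lon\xleftarrow{\alpha_1+\alpha_2}e$ (from $e$ up to $\lon$, length jump $3$), so its summand in Theorem~\ref{thm:graded_character}(2) at $q=0$ is $t^{\frac{3}{2}}\cdot t^{-\half}(1-t)e^{0}=t(1-t)e^{0}$, while $\widetilde{\eta}_4,\widetilde{\eta}_5$ each contribute $(1-t)e^{0}$; the coefficient of $e^{0}$ is therefore $2-t-t^2$, the correct Hall--Littlewood value, and not $3t^{-\half}(1-t)$, which is what the prefactor-free summand $e^{\wt(\pr(\widetilde{\eta}))}\prod_{p}(t^{-\half}(1-t))^{t_p}$ would give. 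So no argument can carry out the reduction you describe: the $q=0$ specialization retains the path-dependent factor $t^{\half(\ell(w\lons)-\ell(w_{s-1,t_{s-1}}))}$, and a correct statement must keep it. A secondary problem affects your use of Lemma~\ref{involution} and Proposition~\ref{fiber}(2): left translation by $w$ does not preserve the Bruhat/quantum type of an edge, and indeed in the example $\widetilde{\eta}_4=s_1s_2\,\widetilde{\eta}_1$ with $\widetilde{\eta}_1\in\PQLS^{\lambda}(\lambda)$ having a quantum edge; hence the surviving index set is cut out by requiring the edges of the translate $w\widetilde{\eta}$ itself to be Bruhat, and it is not literally the set of shifts of elements of $\mathrm{pLS}^{\lambda}(\lambda)$, so the matching you plan to verify fails under that reading as well.
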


\begin{ex}
Let $\mathfrak{g}$ be of type $A_2$, $\lambda=\varpi_1+\varpi_2$, and $\mu= \lon \lambda$. We fix a reduced expression $\lon =s_1 s_2 s_1$ for $\lon$; 
the total order $\prec$ on $\Delta^+$ is:
$\alpha_2 \succ \alpha_1 + \alpha_2 \succ \alpha_1$.
Then, $S=S_\lambda=\emptyset$, and the elements $\widetilde{\eta}$ of $\PQLS^\mu(\lambda)$ are as follows:
\begin{center}
\begin{tabular}{|c|} \hline
$\widetilde{\eta}_1\eqdef\left((\lon);0,1\right)$ \\ \hline 
$\widetilde{\eta}_2\eqdef\left((\lon \xleftarrow{\alpha_1} s_1 s_2);0,1\right)$\\ \hline 
$\widetilde{\eta}_3\eqdef\left((\lon \xleftarrow{\alpha_2} s_2 s_1);0,1\right)$\\ \hline 
$\widetilde{\eta}_4\eqdef\left((\lon \xleftarrow{\alpha_1 + \alpha_2}  e);0,1\right)$\\ \hline
$\widetilde{\eta}_5\eqdef\left((\lon \xleftarrow{\alpha_2} s_2 s_1 \xleftarrow{\alpha_1 + \alpha_2} s_1);0,1\right)$\\ \hline 
$\widetilde{\eta}_6\eqdef\left((\lon \xleftarrow{\alpha_1 + \alpha_2} e \xleftarrow{\alpha_1} s_1);0,1\right)$\\ \hline 
$\widetilde{\eta}_7\eqdef\left((\lon \xleftarrow{\alpha_2} s_2 s_1 \xleftarrow{\alpha_1} s_2);0,1\right)$\\ \hline 
$\widetilde{\eta}_8\eqdef\left((\lon \xleftarrow{\alpha_2} s_2 s_1 \xleftarrow{\alpha_1 + \alpha_2} s_1 \xleftarrow{\alpha_1} e);0,1\right)$\\ \hline 
$\widetilde{\eta}_9\eqdef\left((\lon), (\lon \xleftarrow{\alpha_1 + \alpha_2} e);0,\frac{1}{2},1\right)$\\ \hline 
$\widetilde{\eta}_{10}\eqdef\left((\lon \xleftarrow{\alpha_1} s_1 s_2), (s_1 s_2 \xleftarrow{\alpha_1 + \alpha_2} s_2);0,\frac{1}{2},1\right)$\\ \hline 
$\widetilde{\eta}_{11}\eqdef\left((\lon \xleftarrow{\alpha_2} s_2 s_1), (s_2 s_1 \xleftarrow{\alpha_1 + \alpha_2} s_1);0,\frac{1}{2},1\right)$\\ \hline 
$\widetilde{\eta}_{12}\eqdef\left((\lon \xleftarrow{\alpha_1 + \alpha_2}  e), (e \xleftarrow{\alpha_1 + \alpha_2} \lon);0,\frac{1}{2},1\right)$\\ \hline
$\widetilde{\eta}_{13}\eqdef\left((\lon \xleftarrow{\alpha_2} s_2 s_1 \xleftarrow{\alpha_1 + \alpha_2} s_1), (s_1 \xleftarrow{\alpha_1 + \alpha_2} s_2 s_1);0,\frac{1}{2},1\right)$\\ \hline 
$\widetilde{\eta}_{14}\eqdef\left((\lon \xleftarrow{\alpha_1 + \alpha_2} e \xleftarrow{\alpha_1} s_1), (s_1 \xleftarrow{\alpha_1 + \alpha_2} s_2 s_1);0,\frac{1}{2},1\right)$\\ \hline 
$\widetilde{\eta}_{15}\eqdef\left((\lon \xleftarrow{\alpha_2} s_2 s_1 \xleftarrow{\alpha_1} s_2), (s_2 \xleftarrow{\alpha_1 + \alpha_2} s_1 s_2);0,\frac{1}{2},1\right)$\\ \hline 
$\widetilde{\eta}_{16}\eqdef\left((\lon \xleftarrow{\alpha_2} s_2 s_1 \xleftarrow{\alpha_1 + \alpha_2} s_1 \xleftarrow{\alpha_1} e), (e \xleftarrow{\alpha_1 + \alpha_2} \lon);0,\frac{1}{2},1\right)$\\ \hline 
\end{tabular}
\end{center}
Hence we have
\begin{align*}
&E_{\mu}(q,t)\\
&=e^{\lon \lambda} 
+ e^{s_1 s_2\lambda}\times t^{\frac{1}{2}} \times \frac{t^{-\frac{1}{2}}(1-t)}{1-qt}
+ e^{s_2 s_1\lambda}\times t^{\frac{1}{2}} \times \frac{t^{-\frac{1}{2}}(1-t)}{1-qt}\\
&+ e^{\lambda}\times t^{\frac{3}{2}}\times \frac{t^{-\frac{1}{2}}(1-t)}{1-q^2t^2}
+e^{s_1 \lambda}\times t \times \frac{t^{-\frac{1}{2}}(1-t)}{1-qt}\frac{t^{-\frac{1}{2}}(1-t)}{1-q^2t^2}\\
&+e^{s_1 \lambda}\times t \times \frac{t^{-\frac{1}{2}}(1-t)}{1-q^2t^2}\frac{qt^{\frac{1}{2}}(1-t)}{1-qt}
+e^{s_2 \lambda}\times t \times \frac{t^{-\frac{1}{2}}(1-t)}{1-qt}\frac{t^{-\frac{1}{2}}(1-t)}{1-qt}\\
&+e^{\lambda}\times t^{\frac{3}{2}} \times \frac{t^{-\frac{1}{2}}(1-t)}{1-qt}\frac{t^{-\frac{1}{2}}(1-t)}{1-q^2t^2}\frac{t^{-\frac{1}{2}}(1-t)}{1-qt}
+e^{0}\times t^{\frac{3}{2}} \times \frac{t^{-\frac{1}{2}}(1-t)}{1-qt^2}\\
&+e^{0}\times t \times \frac{t^{-\frac{1}{2}}(1-t)}{1-qt} \frac{t^{-\frac{1}{2}}(1-t)}{1-qt^2}
+e^{0}\times t \times \frac{t^{-\frac{1}{2}}(1-t)}{1-qt} \frac{t^{-\frac{1}{2}}(1-t)}{1-qt^2}\\
&+e^{0}\times 1 \times\frac{t^{-\frac{1}{2}}(1-t)}{1-q^2t^2}\frac{qt^{\frac{3}{2}}(1-t)}{1-qt^2}+e^{0}\times t^{\frac{1}{2}} \times \frac{t^{-\frac{1}{2}}(1-t)}{1-qt}\frac{t^{-\frac{1}{2}}(1-t)}{1-q^2t^2}\frac{qt^{\frac{3}{2}}(1-t)}{1-qt^2}\\
&+e^{0}\times t^{\frac{1}{2}} \times \frac{t^{-\frac{1}{2}}(1-t)}{1-q^2t^2}\frac{qt^{\frac{1}{2}}(1-t)}{1-qt}\frac{qt^{\frac{3}{2}}(1-t)}{1-qt^2}\\
&+e^{0}\times t^{\frac{1}{2}} \times \frac{t^{-\frac{1}{2}}(1-t)}{1-qt}\frac{t^{-\frac{1}{2}}(1-t)}{1-qt}\frac{qt^{\frac{3}{2}}(1-t)}{1-qt^2}\\
&+e^{0}\times 1 \times
\frac{t^{-\frac{1}{2}}(1-t)}{1-qt}\frac{t^{-\frac{1}{2}}(1-t)}{1-q^2t^2}\frac{t^{-\frac{1}{2}}(1-t)}{1-qt}\frac{qt^{\frac{3}{2}}(1-t)}{1-qt^2},
\end{align*}
where the $i$-th term on the right-hand side corresponds to
$\widetilde{\eta}_{i}$ in the list above for $1 \le i \le 16$.
\end{ex}

\begin{ex}
Let $\mathfrak{g}$ be of type $A_2$, $\lambda=\varpi_1+\varpi_2$.
Then, $S=S_\lambda=\emptyset$, and the elements $\widetilde{\eta}$ of $\bigsqcup_{w \in W} w\left( \PQLS^\lambda(\lambda)\right)$,
together with $\wt(\widetilde{\eta})$ and $\mathcal{R}(\widetilde{\eta})$, are as follows:
\begin{center}
\begin{tabular}{|c|c|c|} \hline
$\widetilde{\eta}$ & $\wt(\widetilde{\eta})$ & $\mathcal{R}(\widetilde{\eta})$\\ \hline 
$\widetilde{\eta}_e\eqdef ((e);0,1)$& $\lambda$ & 1 \\ \hline
$\widetilde{\eta}_{s_1}\eqdef ((s_1);0,1)$& $s_1\lambda$ & 1 \\ \hline
$\widetilde{\eta}_{s_2}\eqdef ((s_2);0,1)$& $s_2\lambda$ & 1 \\ \hline
$\widetilde{\eta}_{s_1s_2}\eqdef ((s_1 s_2);0,1)$& $s_1 s_2\lambda$ & 1 \\ \hline
$\widetilde{\eta}_{s_2s_1}\eqdef ((s_2 s_1);0,1)$& $s_2 s_1 \lambda$ & 1 \\ \hline
$\widetilde{\eta}_{\lon}\eqdef ((\lon);0,1)$& $\lon \lambda$ & 1 \\ \hline
$\widetilde{\eta}_1\eqdef ((e),(e \xleftarrow{\alpha_1 + \alpha_2} \lon);0,\frac{1}{2},1)$& $0$ & $\frac{qt^{\frac{3}{2}}(1-t)}{1-qt^2}$ \\ \hline
$\widetilde{\eta}_2\eqdef ((s_1),(s_1 \xleftarrow{\alpha_1 + \alpha_2} s_2s_1);0,\frac{1}{2},1)$& $0$ & $\frac{qt^{\frac{3}{2}}(1-t)}{1-qt^2}$ \\ \hline
$\widetilde{\eta}_3\eqdef ((s_2),(s_2 \xleftarrow{\alpha_1 + \alpha_2} s_1s_2);0,\frac{1}{2},1)$& $0$ & $\frac{qt^{\frac{3}{2}}(1-t)}{1-qt^2}$ \\ \hline
$\widetilde{\eta}_4\eqdef ((s_1 s_2),(s_1 s_2 \xleftarrow{\alpha_1 + \alpha_2} s_2);0,\frac{1}{2},1)$& $0$ & $\frac{t^{-\frac{1}{2}}(1-t)}{1-qt^2}$ \\ \hline
$\widetilde{\eta}_5\eqdef ((s_2 s_1),(s_2 s_1 \xleftarrow{\alpha_1 + \alpha_2} s_1);0,\frac{1}{2},1)$& $0$ & $\frac{t^{-\frac{1}{2}}(1-t)}{1-qt^2}$ \\ \hline
$\widetilde{\eta}_6\eqdef ((\lon),(\lon \xleftarrow{\alpha_1 + \alpha_2} e);0,\frac{1}{2},1)$& $0$ & $\frac{t^{-\frac{1}{2}}(1-t)}{1-qt^2}$ \\ \hline
\end{tabular}
\end{center}
Here 
\begin{align*}
\bigsqcup_{w \in W}w \left(\mathrm{pLS}^\lambda(\lambda) \right)=\{ \widetilde{\eta}_e,
\widetilde{\eta}_{s_1},
\widetilde{\eta}_{s_2},
\widetilde{\eta}_{s_1s_2},
\widetilde{\eta}_{s_2s_1},
\widetilde{\eta}_{\lon},
\widetilde{\eta}_4,
\widetilde{\eta}_5,
\widetilde{\eta}_6
\} .
\end{align*}
Hence we have
\begin{align*}
P_{\lambda}(q,t)&=e^{\lambda}+e^{s_1 \lambda}+e^{s_2\lambda}+e^{s_1 s_2 \lambda}+e^{s_2 s_1\lambda}+e^{\lon \lambda}\\
&+e^{0}\times t^{-\frac{3}{2}}\times \frac{qt^{\frac{3}{2}}(1-t)}{1-qt^2}
+e^{0}\times t^{-\frac{1}{2}}\times \frac{qt^{\frac{3}{2}}(1-t)}{1-qt^2}
+e^{0}\times t^{-\frac{1}{2}}\times \frac{qt^{\frac{3}{2}}(1-t)}{1-qt^2}\\
&+e^{0}\times t^{\frac{1}{2}} \times\frac{t^{-\frac{1}{2}}(1-t)}{1-qt^2}
+e^{0}\times t^{\frac{1}{2}}\times\frac{t^{-\frac{1}{2}}(1-t)}{1-qt^2}
+e^{0}\times t^{\frac{3}{2}}\times \frac{t^{-\frac{1}{2}}(1-t)}{1-qt^2},
\end{align*}
\begin{align*}
P_{\lambda}(0,t)=e^{\lambda}+e^{s_1 \lambda}+e^{s_2\lambda}+e^{s_1 s_2 \lambda}+e^{s_2 s_1\lambda}+e^{\lon \lambda}+3 \times (1-t)e^0,
\end{align*}
and
\begin{align*}
J_{\lambda}^{(\gamma^{-1})}&=e^{\lambda}+e^{s_1 \lambda}+e^{s_2\lambda}+e^{s_1 s_2 \lambda}+e^{s_2 s_1\lambda}+e^{\lon \lambda}+6 \times \frac{1}{\gamma^{-1}+2}e^0.
\end{align*}
\end{ex}

\section{Ram-Yip formulas in terms of $\PQLS^\mu(\lambda)$}

\subsection{Ram-Yip formulas}
In this subsection,
we review formulas \cite[Theorems 3.1 and 3.4]{RY} for nonsymmetric and symmetric Macdonald polynomials.

Let
$\widetilde{ \mathfrak{g}}$ denote the  finite-dimensional simple Lie algebra whose simple roots are $\{ {\alpha}^{\lor}_i \}_{i \in I }$,
and whose simple coroots are
$\{ \alpha_i \}_{i \in I }$;
the Cartan subalgebra of $\widetilde{ \mathfrak{g}}$
is $\mathfrak{h}^*$, and its dual space is $\mathfrak{h}$.
We denote the set of all roots of $\widetilde{ \mathfrak{g}}$ by $\widetilde{\Delta} = \{ \alpha^\lor \ | \ \alpha \in \Delta \}$,
and 
the set of all positive (resp., negative) roots of $\widetilde{ \mathfrak{g}}$ by
$\widetilde{\Delta}^{+}$ (resp., $\widetilde{\Delta}^{-}$).
Also, for 
a subset $S \subset I$,
we set
$\widetilde{Q}_S \eqdef \sum_{i \in S}\mathbb{Z}\alpha^\lor_i$,
$\widetilde{\Delta}_S \eqdef \widetilde{\Delta} \cap \widetilde{Q}_S$,
$\widetilde{\Delta}^+_S = \widetilde{\Delta}_S \cap \widetilde{\Delta}^{+}$,
and
$\widetilde{\Delta}^-_S = \widetilde{\Delta}_S \cap \widetilde{\Delta}^{-}$.

Let $\widetilde{ \mathfrak{g}}_{\aff}$ denote the untwisted affinization of $\widetilde{ \mathfrak{g}}$.
We denote
by $\widetilde{\Delta}_{\aff}$ 
the set of all real roots,
and 
by $\widetilde{\Delta}_{\aff}^{+}$
(resp., $\widetilde{\Delta}_{\aff}^{-}$)
the set of all positive (resp., negative) real roots.
Let $\widetilde{\delta} $ be the (primitive) null root of $\widetilde{ \mathfrak{g}}_{\aff}$.
Then we have
$\widetilde{\Delta}_{\aff}=
 \{ \alpha^\lor +a \widetilde{\delta} \ | \ \alpha \in \Delta , a \in \mathbb{Z} \}$.
We set
$\alpha^{\lor}_0 \eqdef \widetilde{\delta} - \varphi^{\lor} $,
where $\varphi \in \Delta$ denotes the highest short root, 
and set
$ I_{\aff}\eqdef I\sqcup \{0 \}$.
Then, $\{ \alpha^{\lor}_i \}_{i\in  I_{\aff}}$ is the set of all simple roots of
$\widetilde{ \mathfrak{g}}_{\aff}$.
Also, for 
$\beta \in \mathfrak{h}\oplus \mathbb{C}\widetilde{\delta}$,
we define ${\dg}(\beta) \in \mathbb{C} $ and 
$\overline{\beta} \in \mathfrak{h}$
by
$\beta = {\dg}(\beta) \widetilde{\delta} + \overline{\beta}$.

We denote the Weyl group of $\widetilde{ \mathfrak{g}}$ by $\widetilde{W}$;
since we have $\widetilde{W} \cong W $,
we identify these two groups.
For $x\in \mathfrak{h}^*$,
let $t(x)$ denote the translation in $\mathfrak{h}^*$: $ t(x) y = x+y$ for $y \in \mathfrak{h}^*$.
The affine Weyl group and the extended affine Weyl group
(of $\widetilde{\mathfrak{g}}_\aff$) 
are defined by:
$\widetilde{W}_{\aff}\eqdef t(Q) \rtimes W $ and
$\widetilde{W}_{\ext} \eqdef t(P) \rtimes W $,
respectively.
Also, we define $s_0 : \mathfrak{h}^* \rightarrow \mathfrak{h}^*$ by $x \mapsto x -( \langle x, \varphi^\lor  \rangle -1)\varphi $.
Then, $\widetilde{W}_{\aff}=\langle s_i \ | \ i \in I_{\aff}\rangle$.

The extended affine Weyl group $\widetilde{W}_{\ext}$ acts on
$\mathfrak{h}\oplus \mathbb{C}\widetilde{\delta}$
as linear transformations,
and on 
$\mathfrak{h}^*$ 
as affine transformations:
for $v\in W$, $t(x) \in t(P)$,
\begin{align*}
v t(x)( \overline{\beta}+r\widetilde{\delta} )&=v\overline{\beta}+(r-\langle x, \overline{\beta} \rangle ) \widetilde{\delta}
\
\text{ for } \overline{\beta} \in  \mathfrak{h} ,\ r \in \mathbb{C},
\\
v t(x)y&=v x+v y \
\text{ for } y \in \mathfrak{h}^* .
\end{align*}
An element $u \in \widetilde{W}_{\ext} $ can be written as $u=t({{\wt}(u)}) \dir (u)$,
where ${\wt}(u) \in P$ and $ {\dir}(u) \in W$,
according to the decomposition
$\widetilde{W}_{\ext} = t(P) \rtimes W $.
For $w \in \widetilde{W}_{\ext}$,
we denote the length of $w$ by 
$\ell (w) $,
which agrees with the cardinality of the set 
$
\widetilde{\Delta}^+_\aff
\cap
w^{-1}\widetilde{\Delta}^-_\aff
$.
Also, we set 
$\Omega 
\eqdef
\{ w \in \widetilde{W}_{\ext}  \ | \ \ell(w)=0 \}$.

For $\mu \in P$, 
we denote the shortest element in the coset	$t(\mu)W$ by $m_{\mu} \in \widetilde{W}_{\ext}$.
In what follows, we fix $\mu \in P$,
and take a reduced expression $m_{\mu} = u s_{\ell_{1}}\cdots s_{\ell_{L}}
\in \widetilde{W}_{\ext} = \Omega \ltimes \widetilde{W}_{\aff}$,
where $u \in \Omega$ and $ \ell_1 , \ldots , \ell_L \in  I_{\aff}$.
We know the following from \cite[Chap. 2]{M}.
\begin{lemm}\label{vmu}
\mbox{}%
\begin{enu}
\item
$\dir(m_\mu) = v(\mu)v(\lambda_-)^{-1} $,
and
$\ell(\dir(m_\mu)) + \ell( v(\mu) ) =\ell (v(\lambda_-))$.
Moreover, 
\begin{equation}\label{A}
m_\mu 
=
t(\mu)
v(\mu)
v(\lambda_-)^{-1}.
\end{equation}

\item
$v(\mu)v(\lambda_-)^{-1}\lon= v(\mu)\lons$.

\item
$\left( v(\lambda_-)v(\mu)^{-1} \right) m_\mu =m_{\lambda_-} $,
and $\ell(  v(\lambda_-)v(\mu)^{-1} )+ \ell ( m_\mu ) = \ell ( m_{\lambda_-} ) $.
\end{enu}
\end{lemm}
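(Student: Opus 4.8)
The plan is to prove part (1) first, since it yields the explicit form \eqref{A} of $m_\mu$, and then to deduce parts (2) and (3) from it by short computations in $\widetilde{W}_\ext$. For part (1) I would begin from the length of an arbitrary element of the coset $t(\mu)W$. Writing $u = t(\mu)w$ with $w \in W$ and using the explicit action of $\widetilde{W}_\ext$ on $\mathfrak{h}\oplus\mathbb{C}\widetilde{\delta}$ recalled just above, one computes $u(\alpha^\lor + a\widetilde{\delta}) = (w\alpha)^\lor + (a - \langle\mu,(w\alpha)^\lor\rangle)\widetilde{\delta}$, counts the positive affine roots sent to negative ones, groups them by the underlying $\gamma = w\alpha \in \Delta$, and pairs each $\gamma\in\Delta^+$ with $-\gamma$. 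This gives
\[
\ell(t(\mu)w) = \sum_{\gamma\in\Delta^+,\ w^{-1}\gamma\in\Delta^+}|\langle\mu,\gamma^\lor\rangle| + \sum_{\gamma\in\Delta^+,\ w^{-1}\gamma\in\Delta^-}|\langle\mu,\gamma^\lor\rangle - 1|.
\]
Comparing the two summands term by term shows that the minimum over $w\in W$ is attained exactly when the inversion set of $w^{-1}$ equals $\{\gamma\in\Delta^+ : \langle\mu,\gamma^\lor\rangle > 0\}$.

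Next I would identify this minimizer with $v(\mu)v(\lambda_-)^{-1}$. Using $\mu = v(\mu)\lambda$ one has $\langle\mu,\gamma^\lor\rangle = \langle\lambda,(v(\mu)^{-1}\gamma)^\lor\rangle$, so that, as $\lambda$ is dominant, $\langle\mu,\gamma^\lor\rangle>0$ iff $v(\mu)^{-1}\gamma\in\Delta^+\setminus\Delta_S^+$. On the other hand, writing $v(\lambda_-)=\lon\lons$ and using that $W_S$ preserves $\Delta^+\setminus\Delta_S^+$ while $\lon$ reverses all signs, I would check directly that $v(\lambda_-)v(\mu)^{-1}\gamma\in\Delta^-$ holds precisely when $v(\mu)^{-1}\gamma\in\Delta^+\setminus\Delta_S^+$. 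Thus the inversion set of $v(\lambda_-)v(\mu)^{-1}$ coincides with the minimizing set, giving $\dir(m_\mu)=v(\mu)v(\lambda_-)^{-1}$ and hence \eqref{A}. For the length identity I would partition $\Delta^+$ according to whether $v(\mu)^{-1}\gamma$ lies in $\Delta^+\setminus\Delta_S^+$, in $\Delta^-\setminus\Delta_S^-$, or in $\Delta_S^+$ (the case $v(\mu)^{-1}\gamma\in\Delta_S^-$ cannot occur for $\gamma\in\Delta^+$ since $v(\mu)\in W^S$). These three blocks have cardinalities $\ell(\dir(m_\mu))$, $\ell(v(\mu))$, and $|\Delta_S^+|$; summing to $|\Delta^+|$ and using $\ell(v(\lambda_-))=|\Delta^+\setminus\Delta_S^+|$ yields $\ell(\dir(m_\mu))+\ell(v(\mu))=\ell(v(\lambda_-))$.

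Part (2) is then immediate: since $v(\lambda_-)=\lon\lons$ and $\lons$ is an involution, $v(\lambda_-)^{-1}\lon=\lons\lon^{-1}\lon=\lons$, whence $v(\mu)v(\lambda_-)^{-1}\lon=v(\mu)\lons$. For part (3), substituting \eqref{A} and using the conjugation rule $w\,t(x)=t(wx)\,w$ together with $v(\lambda_-)v(\mu)^{-1}\mu=v(\lambda_-)\lambda=\lambda_-$ gives $(v(\lambda_-)v(\mu)^{-1})m_\mu=t(\lambda_-)=m_{\lambda_-}$, the last equality being \eqref{A} for $\mu=\lambda_-$ (where $\dir(m_{\lambda_-})=e$). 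For the length additivity, evaluating the length formula at the minimizer yields $\ell(m_\mu)=\sum_{\gamma\in\Delta^+}|\langle\mu,\gamma^\lor\rangle|-\ell(\dir(m_\mu))$, and since $\mu\in W\lambda$ with $\lambda$ dominant the sum equals $\sum_{\gamma\in\Delta^+}\langle\lambda,\gamma^\lor\rangle=\ell(t(\lambda_-))=\ell(m_{\lambda_-})$; combined with $\dir(m_\mu)=v(\mu)v(\lambda_-)^{-1}$ this gives $\ell(v(\lambda_-)v(\mu)^{-1})+\ell(m_\mu)=\ell(m_{\lambda_-})$.

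The main obstacle is the bookkeeping in part (1): verifying that the combinatorially defined minimizing inversion set $\{\gamma\in\Delta^+:\langle\mu,\gamma^\lor\rangle>0\}$ is genuinely realized by $v(\lambda_-)v(\mu)^{-1}$ (and is a closed, biconvex subset), which is precisely where the decomposition $v(\lambda_-)=\lon\lons$ together with the membership $v(\mu)\in W^S$ must be used with care; the remaining steps are routine manipulations in $\widetilde{W}_\ext$. Alternatively, since the statement is attributed to \cite[Chap.~2]{M}, one may simply invoke the corresponding results there.
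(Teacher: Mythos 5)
Your argument is correct, but it takes a genuinely different route from the paper, because the paper gives no proof of this lemma at all: it is stated as known, with the citation \cite[Chap.~2]{M} (which is also the fallback you offer in your last sentence). Your proposal replaces that citation with a self-contained derivation from the Iwahori--Matsumoto length formula: you compute $\ell(t(\mu)w)$ by counting positive affine roots sent to negative ones, pairing $\gamma$ with $-\gamma$; you observe that the minimum over the coset $t(\mu)W$ is attained precisely when the inversion set of $w^{-1}$ equals $\{\gamma\in\Delta^+ \mid \langle\mu,\gamma^\lor\rangle>0\}$; and you then check that $v(\lambda_-)v(\mu)^{-1}$ has exactly this inversion set, using $v(\lambda_-)=\lon\lons$, the fact that $W_S$ stabilizes $\Delta^+\setminus\Delta^+_S$, and $v(\mu)\Delta^+_S\subset\Delta^+$ (equivalent to $v(\mu)\in W^S$). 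I verified the key steps: exhibiting $v(\lambda_-)v(\mu)^{-1}$ as a realizer of the minimizing set makes any separate biconvexity argument unnecessary (inversion sets determine Weyl group elements, so this simultaneously re-proves uniqueness of $m_\mu$); your partition of $\Delta^+$ into blocks of sizes $\ell(\dir(m_\mu))$, $\ell(v(\mu))$, $|\Delta^+_S|$, combined with $\ell(v(\lambda_-))=|\Delta^+|-|\Delta^+_S|$, correctly yields the length identity in (1); and parts (2) and (3) are indeed formal consequences, via $v(\lambda_-)^{-1}\lon=\lons$, the conjugation rule $w\,t(x)=t(wx)\,w$, and the evaluation $\ell(m_\mu)=\sum_{\gamma\in\Delta^+}|\langle\mu,\gamma^\lor\rangle|-\ell(\dir(m_\mu))$ with $\sum_{\gamma\in\Delta^+}|\langle\mu,\gamma^\lor\rangle|=\ell(m_{\lambda_-})$. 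What the paper's citation buys is brevity and consistency with Macdonald's conventions, where these statements appear in his \S 2.4; what your route buys is independence from \cite{M} and an explicit inversion-set mechanism that also illuminates the later description of $\widetilde{\Delta}^+_{\aff}\cap m^{-1}_{\mu}\widetilde{\Delta}^-_{\aff}$ in Lemma \ref{shortest}.
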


For each $J = \{ j_{1} < j_{2} < j_{3} < \cdots < j_{r} \} \subset \{1,\ldots,L\}$ and $w \in W$,
we define an alcove path $p^{\OS}_{J} =
			\left( w m_{\mu} = z^{\OS}_0, z^{\OS}_{1} , \ldots , z^{\OS}_{r} ; \beta^{\OS}_{j_1} , \ldots , \beta^{\OS}_{j_r} \right)$ as follows: 
we set
$\beta^{\OS}_{k} \eqdef s_{\ell_{L}}\cdots s_{\ell_{k+1}} \alpha^{\lor}_{\ell_{k}} \in \widetilde{\Delta}^+_{\aff}$ 
for $1 \leq k \leq L$, and set
	\begin{eqnarray*}
		z^{\OS}_{0}&=&w m_{\mu} ,\\
		z^{\OS}_{1}&=&w m_{\mu}s_{\beta^{\OS}_{j_{1}}},\\
		z^{\OS}_{2}&=&w m_{\mu}s_{\beta^{\OS}_{j_{1}}}s_{\beta^{\OS}_{j_2}},\\
				&\vdots& \\
		z^{\OS}_{r}&=&w m_{\mu}s_{\beta^{\OS}_{j_{1}}}\cdots s_{\beta^{\OS}_{j_r}}.
	\end{eqnarray*}
Following \cite[\S 2.3]{RY}, 
we set
	$\B (w;m_{\mu})
	\eqdef
	\left\{ p^{\OS}_{J} \ \left| \ J \subset \{ 1,\ldots ,L \}  \right. \right\}$
and
	$\ed (p^{\OS}_{J}) \eqdef z^{\OS}_{r}\in \widetilde{W}_{\ext}$.

	\begin{rem}[\normalfont{\cite[(2.4.7)]{M}}]
		If $j \in J$,
		then $-\overline{ {\beta}^{\OS}_{j}  }^{\lor} \in {\Delta}^{+}$.
	\end{rem}

	For $p^{\OS}_{J} \in \B({\id} ; m_{\mu})$, 
let $J^+ \subset J$ (resp., $J^-  \subset J$) denote the set of all indices $j_i \in J$ for which 
	$\dir(z_{i-1}^{\OS}) 
	\xrightarrow{-\overline{ {\beta}^{\OS}_{j_i}  }^{\lor}} 
	\dir (z_{i}^{\OS})$ is a Bruhat (resp., quantum) edge of $\DBG$.
Also, for $p^{\OS}_{J} \in \B (w;m_{\mu})$, we set
\begin{equation*}
\mathcal{L}(p^{\OS}_{J})\eqdef
 t^{-\half\#J}(1-t)^{\# J}
 \prod_{j \in J^+}\frac{1}{1- q^{\dg(\beta_j^\OS)}t^{\langle \rho, - \overline{ \beta^\OS_j }\rangle}}
\prod_{j \in J^-}\frac{q^{\dg(\beta_j^\OS)}t^{\langle \rho, - \overline{ \beta^\OS_j }\rangle}}{1- q^{\dg(\beta_j^\OS)}t^{\langle \rho, - \overline{ \beta^\OS_j }\rangle}}.
\end{equation*}


\begin{pro}[{\cite[Theorem 3.1]{RY}}]\label{os}
Let $\mu \in P$. Then, the following holds:
\begin{align*}
E_{\mu}(q,t)=
\sum_{p^{\OS}_{J} \in \B({\id} ; m_{\mu}) } 
&e^{\wt (\ed (p^{\OS}_J) )}t^{\half (\ell(\dir(\ed( p^{\OS}_{J} ))) - \ell(\dir(m_{\mu})) )}
\mathcal{L}(p^{\OS}_{J}).
\end{align*}
\end{pro}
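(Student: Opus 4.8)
The statement is \cite[Theorem 3.1]{RY}, so in the end I would simply cite it; what follows is the structure of the argument underlying it. The plan is to work inside the polynomial representation of the double affine Hecke algebra, on $\mathbb{C}[P]=\bigoplus_{\nu\in P}\mathbb{C}e^\nu$. Recall that the $E_\mu(q,t)$, $\mu\in P$, are the simultaneous eigenfunctions of the Cherednik operators $Y^\nu$, normalized so that $E_\mu=e^\mu+(\text{lower terms})$. The construction begins with $E_0=e^0=1$ together with the intertwiner recursion: the affine Hecke algebra contains elements $\tau_i$ with $\tau_i Y = Y^{s_i}\tau_i$, so that applying the intertwiners along a reduced expression of $m_\mu$ produces $E_\mu$ from $1$, up to an explicit scalar. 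Since $m_\mu=u\,s_{\ell_1}\cdots s_{\ell_L}$ with $u\in\Omega$ (which acts by a length-zero rotation of the fundamental alcove and carries no fold), this realizes $E_\mu$ as the image of $e^0$ under the operator attached to the fixed reduced word of $m_\mu$.

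Next I would expand this operator through the Demazure--Lusztig description of each $T_{\ell_k}$. The essential computation is that $T_{\ell_k}$ sends a monomial to a sum of exactly two terms: a \emph{straight} term, corresponding to crossing the wall $\beta^{\OS}_{\ell_k}$ and carrying a power of $t^{\half}$; and a \emph{folded} term, corresponding to a fold at that wall and carrying the factor $(1-t)$ over $1-q^{\dg(\beta^{\OS}_{\ell_k})}t^{\langle\rho,-\overline{\beta^{\OS}_{\ell_k}}\rangle}$. Because the affine generator acts through $\alpha^\lor_0=\widetilde\delta-\varphi^\lor$, the variable $q$ enters precisely via the degree $\dg(\beta^{\OS}_{j})$, while $t$ enters via $\langle\rho,-\overline{\beta^{\OS}_{j}}\rangle$. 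Expanding the full product $T_{\ell_1}\cdots T_{\ell_L}$ then yields $2^L$ terms, one for each choice of ``straight or folded'' at each step; grouping them by the set $J\subset\{1,\dots,L\}$ of folded steps reproduces exactly the alcove paths $p^{\OS}_J\in\B(\id;m_\mu)$, and the endpoint $z^{\OS}_r=\ed(p^{\OS}_J)\in\widetilde{W}_{\ext}$ contributes the monomial $e^{\wt(\ed(p^{\OS}_J))}$.

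It then remains to match the statistics. The $\#J$ folds contribute the global factor $t^{-\half\#J}(1-t)^{\#J}$. The remaining $q,t$-factors split according to whether the projected direction step $\dir(z^{\OS}_{i-1})\xrightarrow{-\overline{\beta^{\OS}_{j_i}}^{\lor}}\dir(z^{\OS}_i)$ is a Bruhat edge (length-increasing, index in $J^+$) or a quantum edge (length-decreasing, index in $J^-$) of $\DBG$; here $-\overline{\beta^{\OS}_{j_i}}^{\lor}\in\Delta^+$ by \cite[(2.4.7)]{M}, so these are genuine labels of $\DBG$. This split gives the two products in $\mathcal{L}(p^{\OS}_J)$, and collecting the straight-step contributions together with the normalization from the recursion produces the prefactor $t^{\half(\ell(\dir(\ed(p^{\OS}_J)))-\ell(\dir(m_\mu)))}$. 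As a consistency check, the term $J=\emptyset$ (the all-straight walk) has $\ed(p^{\OS}_\emptyset)=m_\mu$, so by Lemma \ref{vmu} its weight is $\wt(m_\mu)=\mu$, its $t$-prefactor is $t^0=1$, and $\mathcal{L}(p^{\OS}_\emptyset)=1$, recovering the leading term $e^\mu$.

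The main obstacle is the bookkeeping of statistics rather than any conceptual difficulty. One must verify that every Demazure--Lusztig expansion step is compatible with the alcove-walk recursion, so that the two terms genuinely correspond to crossing versus folding at the correct hyperplane, and that the Bruhat/quantum dichotomy of the projected direction edges matches both the $J^+/J^-$ split and the signs of the $q,t$-exponents in $\mathcal{L}$. Getting the $q$-degree right through the affine action, and correctly absorbing the length-zero factor $u\in\Omega$ into the starting configuration, are the delicate points; all of this is carried out in \cite{RY}.
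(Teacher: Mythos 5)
Your proposal is correct and matches the paper's treatment exactly: the paper does not prove this proposition but simply quotes it as \cite[Theorem 3.1]{RY}, which is precisely what you do. Your sketch of the underlying argument (intertwiner recursion in the DAHA polynomial representation, Demazure--Lusztig expansion into straight/folded steps indexed by $J$, and the matching of the Bruhat/quantum dichotomy with the $J^+/J^-$ factors in $\mathcal{L}(p^{\OS}_J)$) is a faithful outline of Ram--Yip's proof, so nothing further is needed.
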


\begin{pro}[{\cite[Theorem 3.4]{RY}}]\label{sos}
Let $\lambda \in P^+$. Then, the following holds:
\begin{align*}
P_{\lambda}(q,t)=
\sum_{w \in W^S}
\sum_{p^{\OS}_{J} \in \B(w ; m_{\lambda}) } 
&e^{\wt (\ed (p^{\OS}_J) )}t^{\half (\ell(\dir(\ed( p^{\OS}_{J} )))- \ell(w \dir(m_{\lambda})) ) }
\mathcal{L}(p^{\OS}_{J}).
\end{align*}
\end{pro}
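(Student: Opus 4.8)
Since Proposition \ref{sos} is quoted from \cite{RY}, I will only outline a derivation, taking the most natural route of deducing it from the nonsymmetric formula of Proposition \ref{os}. The plan is to use the standard fact, from the theory of the double affine Hecke algebra underlying \cite{RY}, that for a dominant $\lambda \in P^+$ the symmetric Macdonald polynomial $P_\lambda(q,t)$ is, up to an explicit scalar, the image of the nonsymmetric polynomial $E_\lambda(q,t)$ under the finite Hecke-algebra symmetrizer, expanded in the basis $\{T_w\}_{w \in W}$ of the polynomial representation. Thus I would first specialize Proposition \ref{os} to $\mu = \lambda$, so that the alcove walks run over $\B(\id; m_\lambda)$ and the starting element is $m_\lambda$ itself, and then apply the symmetrizer term by term in $w$.

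The combinatorial heart of the argument is the effect of a single $T_w$. Writing $w = s_{j_1}\cdots s_{j_k}$ in reduced form, multiplying the $m_\lambda$-expansion on the left by $T_w$ and expanding each generator by the same ``fold or do not fold'' dichotomy used for Proposition \ref{os} prepends to every alcove walk a straight, unfolded initial segment; this amounts precisely to moving the starting alcove from $m_\lambda$ to $w m_\lambda$, while the fold choices along the $m_\lambda$-part are untouched. Consequently the combined expansion ranges over $\bigsqcup_{w \in W} \B(w; m_\lambda)$. The statistic $\mathcal{L}(p^{\OS}_J)$ is built only from the edges of the $m_\lambda$-segment and therefore transfers verbatim; the monomial is $e^{\wt(\ed(p^{\OS}_J))}$ as before; and since $\dir(w m_\lambda) = w\,\dir(m_\lambda)$, the reference length in the $t$-exponent becomes $\ell(w\,\dir(m_\lambda))$, exactly matching the exponent $\half(\ell(\dir(\ed(p^{\OS}_J))) - \ell(w\,\dir(m_\lambda)))$ in the statement.

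The step I expect to be the main obstacle is passing from the full sum over $w \in W$ to the sum over the minimal coset representatives $w \in W^S$ in the statement, together with the matching of normalizations. Here the key point is that $S = S_\lambda$ is exactly the set of simple reflections fixing $\lambda$, so $W_S = \mathrm{Stab}_W(\lambda)$; for $u \in W_S$ the element $u$ acts trivially on $m_\lambda$ up to the stabilizer, and one must check that the alcove-walk data and $t$-weights attached to $w$ and to $wu$ organize so that summing over each coset $w W_S$ and dividing by the normalizing factor $W_S(t) = \sum_{u \in W_S} t^{\ell(u)}$ leaves a single representative $w \in W^S$. Making the symmetrizer identity and its scalar precise, and verifying this coset collapse, is the delicate bookkeeping. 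As a consistency check on the whole scheme I would confirm the leading term: the empty-fold walks ($J = \emptyset$) contribute $\mathcal{L} = 1$, trivial $t$-power, and weight $\wt(w m_\lambda) = w\lambda$, so that summing over $w \in W^S$ yields $\sum_{w \in W^S} e^{w\lambda} = m^\lambda$, the required top term of $P_\lambda(q,t)$.
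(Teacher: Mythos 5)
You should first note what the paper actually does here: Proposition~\ref{sos} is not proved in the paper at all. It is quoted verbatim from \cite[Theorem 3.4]{RY} (hence the bracketed attribution) and used as a black box, exactly like Proposition~\ref{os}; the paper's own work only begins afterwards, with the bijection $\Xi$ between alcove walks and pQLS pairs. So there is no internal proof to compare your proposal against; the relevant benchmark is the derivation at the source. Against that benchmark your outline takes the standard route, which is indeed how the symmetric formula is related to the nonsymmetric one: symmetrize by the finite Hecke algebra, the symmetrization shows up combinatorially as letting the starting alcove range over $w m_{\lambda}$, and the Poincar\'e polynomial $W_S(t)=\sum_{u \in W_S} t^{\ell(u)}$ of the stabilizer accounts for passing from a sum over $W$ to a sum over $W^S$. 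Your leading-term check is also correct: for $J=\emptyset$ one has $\wt(\ed(p^{\OS}_{\emptyset}))=w\lambda$, $\dir(w m_{\lambda}) = w\,\dir(m_{\lambda})$, trivial $t$-power, and $w \mapsto w\lambda$ is a bijection $W^S \to W\lambda$, giving $m^{\lambda}$.

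As a proof, however, the proposal has two gaps, only one of which you flag. The unflagged one is the more serious: the claim that left multiplication by $T_w$ ``prepends a straight, unfolded initial segment'' with ``the fold choices along the $m_\lambda$-part untouched'' is not true if $T_w$ is applied, as you describe, generator by generator to the already-expanded polynomial $E_\lambda$: each Demazure--Lusztig operator $T_{s_i}$ acting on a monomial produces \emph{both} a crossing term and a folding term, so folds would proliferate in the initial segment as well. The statement only becomes correct if the computation is organized inside the affine Hecke algebra before projecting to the polynomial representation, using two facts: the length additivity $\ell(u m_{\lambda}) = \ell(u) + \ell(m_{\lambda})$ for all $u \in W$ (which holds because, $\lambda$ being dominant, the inversion set $\widetilde{\Delta}^+_{\aff} \cap m_{\lambda}^{-1}\widetilde{\Delta}^-_{\aff}$ consists of roots $\alpha^{\lor}+a\widetilde{\delta}$ with $a>0$ by Lemma~\ref{shortest}, hence contains no finite roots), and the absorption property $\mathbf{1}_0 T_{s_i} = t^{\half}\,\mathbf{1}_0$ of the symmetrizer $\mathbf{1}_0$; without this reorganization the ``term by term in $w$'' step fails as stated. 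The flagged gap --- the exact symmetrizer identity with its scalar, and the collapse of each coset $wW_S$ against $W_S(t)$ --- is precisely where the content of \cite[Theorem 3.4]{RY} lies, so deferring it leaves the proposal as a correct plan for reproving the citation rather than a proof of it.
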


\subsection{A specific total order on $\widetilde{\Delta}^+_{\aff} \cap m^{-1}_{\lambda_-} \widetilde{\Delta}^-_{\aff}$}
Let $\lambda \in P^+$ be a dominant weight,
and set 
$\lambda_- = \lon \lambda$
and
$S =\{ i \in I \ | \ \langle \lambda , \alpha^\lor_i \rangle =0 \}$.
We fix reduced expressions 
\begin{align*}
v(\lambda_-) &=  s_{i_1}\cdots s_{i_{M}}, \\
\lons &= s_{i_{M+1}}\cdots s_{i_N},
\end{align*}
for $v(\lambda_-)$ and $\lons$, respectively. 
We set 
$\beta_k = s_{i_N} \cdots s_{i_{k+1}} \alpha_{i_k}$ for $1 \leq k \leq N$,
and use the total order $\prec$ on $\Delta^+$ defined at the beginning of $\S 2.2$.

We define an injective map $\Phi$ by:
\begin{eqnarray*}
\Phi : \widetilde{\Delta}^+_{\aff} \cap m^{-1}_{\lambda_-} \widetilde{\Delta}^-_{\aff} 
&\rightarrow&
\mathbb{Q}_{\geq 0} \times (\Delta^+\setminus \Delta^+_S ) ,\\
\beta = \overline{\beta} + \dg(\beta) \widetilde{\delta}
&\mapsto &
\left(\frac{\langle {\lambda_-} ,  \overline{\beta} \rangle -  \dg(\beta)}{\langle {\lambda_-} ,  \overline{\beta} \rangle }  
,  \lon \overline{\beta}^\lor  
\right);
\end{eqnarray*}
note that $\langle {\lambda_-} ,  \overline{\beta} \rangle >0$, $\langle {\lambda_-} ,  \overline{\beta} \rangle -  \dg(\beta) \geq 0$,
and 
$\lon \overline{\beta}^\lor \in \Delta^+\setminus \Delta^+_S $, 
since $\langle {\lambda_-} ,  \overline{\beta} \rangle = \langle {\lambda} ,  \lon \overline{\beta} \rangle >0$ 
and we know from \cite[(2.4.7) (i)]{M}
that
\begin{equation}\label{B}
\widetilde{\Delta}^+_{\aff} \cap m^{-1}_{\lambda_-} \widetilde{\Delta}^-_{\aff} =
\{ \alpha^{\lor} + a \tilde{\delta} 
\ | \
\alpha \in \Delta^-,
0 < a \leq  \langle \lambda_- , \alpha^{\lor}
\rangle 
\}.
\end{equation}
We now consider the lexicographic order $<$ on $\mathbb{Q}_{\geq 0} \times (\Delta^+\setminus \Delta^+_S)$
induced by the usual total order on $\mathbb{Q}_{\geq 0}$
and
the reverse of
the restriction to $\Delta^+\setminus \Delta^+_S$
of the 
total order $\prec$ on $\Delta^+$;
that is, for $(a, \alpha), (b, \beta) \in \mathbb{Q}_{\geq 0} \times (\Delta^+\setminus \Delta^+_S)$,
\begin{equation*}
(a, \alpha)<(b, \beta) \mbox{ if and only if } 
a<b, \mbox{ or }
a=b \mbox{ and } \alpha \succ \beta.
\end{equation*}
Then we denote by $\prec'$ 
the total order on $\widetilde{\Delta}^+_{\aff} \cap m^{-1}_{\lambda_-} \widetilde{\Delta}^-_{\aff}$
induced by  the lexicographic order on $\mathbb{Q}_{\geq 0} \times (\Delta^+\setminus \Delta^+_S)$
through the map $\Phi$,
and write $\widetilde{\Delta}^+_{\aff} \cap m^{-1}_{\lambda_-} \widetilde{\Delta}^-_{\aff} $
as 
$\left\{\gamma_1 \prec' \cdots \prec' \gamma_L \right\}$.

\begin{pro}[\normalfont{\cite[Proposition 3.1.8]{NNS}}]\label{goodreducedexpression}
Keep the notation and setting above.
Then, there exists a reduced expression $m_{{\lambda_-}}=u s_{\ell_1}\cdots s_{\ell_L}$ for $m_{\lambda_-}$, $u \in \Omega$, $\{\ell_1, \ldots , \ell_L \} \subset I_{\aff}$,
such that
$\beta^\OS_j 
\left( =
s_{\ell_L}\cdots s_{\ell_{j+1}}\alpha^\lor_{\ell_j} \right) =\gamma_j$
for all $1 \leq j \leq L$.
\end{pro}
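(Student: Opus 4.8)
The goal is to produce a reduced expression $m_{\lambda_-}=u\,s_{\ell_1}\cdots s_{\ell_L}$ whose associated roots $\beta^\OS_j$ reproduce \emph{exactly} the enumeration $\gamma_1\prec'\cdots\prec'\gamma_L$ of the inversion set $\widetilde{\Delta}^+_\aff\cap m^{-1}_{\lambda_-}\widetilde{\Delta}^-_\aff$. The plan is to reduce this to a purely order-theoretic criterion. Recall the standard fact that for a reduced expression $m_{\lambda_-}=u\,s_{\ell_1}\cdots s_{\ell_L}$, the roots $\beta^\OS_k=s_{\ell_L}\cdots s_{\ell_{k+1}}\alpha^\lor_{\ell_k}$ enumerate the inversion set, and moreover the resulting total order is precisely a \emph{reflection order} (a convex/total reflection order) on $\widetilde{\Delta}^+_\aff\cap m^{-1}_{\lambda_-}\widetilde{\Delta}^-_\aff$. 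Conversely, every reflection order on this inversion set arises from such a reduced expression. Thus the Proposition reduces to the single claim:
\begin{claim}
The total order $\prec'$ defined above is a reflection order on $\widetilde{\Delta}^+_\aff\cap m^{-1}_{\lambda_-}\widetilde{\Delta}^-_\aff$; that is, whenever $\gamma,\gamma',\gamma''$ lie in this set with $(\gamma')^\lor=\gamma^\lor+(\gamma'')^\lor$ (equivalently $\gamma'=a\gamma+b\gamma''$ with positive coefficients in the appropriate sense), one has $\gamma\prec'\gamma'\prec'\gamma''$ or $\gamma''\prec'\gamma'\prec'\gamma$.
\end{claim}

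First I would set up the correspondence via the map $\Phi$. Writing a root $\beta=\overline\beta+\dg(\beta)\widetilde\delta$ in the inversion set as $\alpha^\lor+a\widetilde\delta$ with $\alpha\in\Delta^-$ and $0<a\le\langle\lambda_-,\alpha^\lor\rangle$ (using \eqref{B}), the first coordinate of $\Phi(\beta)$ is $(\langle\lambda_-,\overline\beta\rangle-\dg(\beta))/\langle\lambda_-,\overline\beta\rangle$ and the second is $\lon\overline\beta^\lor\in\Delta^+\setminus\Delta^+_S$. The key computation is that additive relations among affine roots project nicely under $\Phi$: if $(\gamma')^\lor=\gamma^\lor+(\gamma'')^\lor$ in $\widetilde\Delta_\aff$, then on one hand the $\widetilde\delta$-components and the finite parts each add, and on the other hand the finite projections $\lon\overline\gamma^\lor,\lon(\overline{\gamma'})^\lor,\lon(\overline{\gamma''})^\lor$ satisfy the corresponding additive relation in $\Delta^+\setminus\Delta^+_S$. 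I would then check the reflection-order inequality by analyzing the lexicographic order coordinate-by-coordinate: if the first ($\mathbb{Q}_{\ge0}$) coordinates of $\Phi(\gamma)$ and $\Phi(\gamma'')$ differ, the betweenness of $\Phi(\gamma')$ follows from a short affine-linear computation (the first coordinate of $\Phi(\gamma')$ is a weighted mediant of those of $\gamma,\gamma''$, hence lies strictly between them); if they coincide, the second coordinates live in $\Delta^+\setminus\Delta^+_S$ and betweenness follows from the \emph{Remark} already established in the excerpt, namely that $\prec$ (hence its reverse) is a reflection order on $\Delta^+$, combined with the additivity of the projected coroots. This reduces the affine reflection-order property to the finite one plus convexity in the $\widetilde\delta$-direction.

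The main obstacle I anticipate is the mediant computation showing strict betweenness of the first coordinates, and correctly handling the boundary/degenerate cases: when two of the three roots share the same finite part (so the first coordinates genuinely control the order), and when the additive relation forces $\langle\lambda_-,\overline{\gamma'}\rangle=\langle\lambda_-,\overline\gamma\rangle+\langle\lambda_-,\overline{\gamma''}\rangle$, one must verify that the fraction $(\langle\lambda_-,\overline{\gamma'}\rangle-\dg(\gamma'))/\langle\lambda_-,\overline{\gamma'}\rangle$ is a convex combination of the other two fractions with the \emph{same} orientation as dictated by $\prec'$. I would carefully track the direction of the inequality so that the two admissible orderings in the reflection-order definition come out correctly, rather than being reversed. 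Once the Claim is proved, the Proposition follows immediately: by the classical bijection between reflection orders on the inversion set of $m_{\lambda_-}$ and reduced words for $m_{\lambda_-}$ (in the extended affine Weyl group, pulling out the length-zero element $u\in\Omega$), there is a reduced expression $m_{\lambda_-}=u\,s_{\ell_1}\cdots s_{\ell_L}$ whose enumeration $\beta^\OS_1,\ldots,\beta^\OS_L$ coincides with $\gamma_1,\ldots,\gamma_L$, as required. I would cite \cite{NNS} or the standard reflection-order/reduced-word correspondence for the final step rather than reprove it.
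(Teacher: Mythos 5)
The paper itself contains no proof of this proposition: it is imported verbatim from \cite[Proposition 3.1.8]{NNS}, so the only comparison available is with that source and with the standard argument, and your proposal is exactly that standard argument, carried out correctly. Specifically: reduce to showing that $\prec'$ is a convex (reflection) order on $\widetilde{\Delta}^+_{\aff}\cap m_{\lambda_-}^{-1}\widetilde{\Delta}^-_{\aff}$, then invoke Papi's correspondence (Proc.\ Amer.\ Math.\ Soc.\ 120 (1994)) between convex orders on the inversion set of a fixed element and the orders induced by its reduced expressions; the length-zero factor $u\in\Omega$ is indeed harmless, since it permutes $\widetilde{\Delta}^+_{\aff}$, so the inversion set and the roots $\beta^{\OS}_j$ depend only on $s_{\ell_1}\cdots s_{\ell_L}$. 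Your two anticipated verifications do go through. Every element of the inversion set has the form $\alpha^\lor+a\widetilde{\delta}$ with $\alpha\in\Delta^-$ and $0<a\le\pair{\lambda_-}{\alpha^\lor}$, so an additive relation $\gamma_2=\gamma_1+\gamma_3$ inside $\widetilde{\Delta}_{\aff}$ gives $a_2=a_1+a_3$ and $c_2=c_1+c_3$, where $c_i\eqdef\pair{\lambda_-}{\overline{\gamma_i}}>0$; the first coordinates $1-a_i/c_i$ of $\Phi$ then satisfy the mediant inequality, strictly unless $a_1/c_1=a_3/c_3$, and in that degenerate case the second coordinates $\lon\overline{\gamma_i}^{\lor}$ satisfy precisely the coroot-additivity hypothesis of the Remark following \eqref{weak_reflection_order}, whose betweenness conclusion is preserved when $\prec$ is reversed. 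Two small points to tighten: (a) state the convexity condition as root addition inside $\widetilde{\Delta}_{\aff}$ (equivalently, coroot addition of the finite parts), not in terms of ``coroots of affine roots,'' so that it matches Papi's criterion verbatim; (b) note that Papi's co-convexity condition requires no verification here, because the ordered set is already known to be the full inversion set of $m_{\lambda_-}$, and one only needs the implication that a convex order on a given inversion set is induced by some reduced word, which is valid in the affine setting.
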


We set $a_k \eqdef \dg(\beta^{\OS}_k) \in \mathbb{Z}_{> 0} $;
since 
$\widetilde{\Delta}^+_\aff \cap m_{\lambda_-}^{-1} \widetilde{\Delta}^-_\aff
=
\{
\beta^\OS_1, \ldots , \beta^\OS_L
\}
$,
we see by (\ref{B}) that
$0<a_k \leq \langle {\lambda_-}, \overline{\beta^\OS_k}\rangle$.

\begin{lemm}[\normalfont{\cite[Lemma 3.1.10]{NNS}}]\label{lengthadditive}
Keep the notation and setting above.
Let us rewrite the reduced expression  $u s_{\ell_1}\cdots s_{\ell_L}$ for $m_{{\lambda_-}}$
as $s_{i'_1} \cdots s_{i'_M} u s_{\ell_{M+1}} \cdots s_{\ell_L}$ 
by setting $us_{\ell_k}u^{-1} = s_{i'_k}$ with $i'_k \in I_{\aff}$,
for $1 \leq k \leq M$. 
Then, $s_{i'_1} \cdots s_{i'_M}$ is a reduced expression for $v({\lambda_-})$,
and $u s_{\ell_{M+1}} \cdots s_{\ell_L}$ is a reduced expression for $m_\lambda$.
Moreover, $i_k = i'_k$ for $ 1 \leq k \leq M$.
\end{lemm}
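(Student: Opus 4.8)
The plan is to reduce the statement to the length-additive factorization $m_{\lambda_-} = v(\lambda_-)\,m_\lambda$ and then to track inversion sets through the given reduced word. First I would apply Lemma~\ref{vmu}(3) with $\mu = \lambda$ (so that $v(\mu) = e$), which gives $v(\lambda_-)\,m_\lambda = m_{\lambda_-}$ together with $\ell(v(\lambda_-)) + \ell(m_\lambda) = \ell(m_{\lambda_-}) = L$; by Lemma~\ref{vmu}(1) one also has $m_\lambda = t(\lambda)\,v(\lambda_-)^{-1}$ and $m_{\lambda_-} = t(\lambda_-)$. Since $\ell(v(\lambda_-)) = M$, this is the factorization we must recognize inside $u s_{\ell_1}\cdots s_{\ell_L}$. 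Because $u \in \Omega$ has length zero, conjugation $u s_{\ell_k} u^{-1} = s_{i'_k}$ is the action of a Dynkin-diagram automorphism permuting the $s_j$; hence $u s_{\ell_1}\cdots s_{\ell_M} = s_{i'_1}\cdots s_{i'_M}\,u$, and $m_{\lambda_-} = s_{i'_1}\cdots s_{i'_M}\cdot\bigl(u s_{\ell_{M+1}}\cdots s_{\ell_L}\bigr)$ is again a reduced expression (length is unaffected by the length-zero factor). Write $x := s_{i'_1}\cdots s_{i'_M}$ and $y := u s_{\ell_{M+1}}\cdots s_{\ell_L}$, so $m_{\lambda_-} = xy$ with $\ell(x) = M$, $\ell(y) = L - M$.

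Next I would carry out the inversion-set bookkeeping. Abbreviate $N(w) := \widetilde{\Delta}^+_{\aff}\cap w^{-1}\widetilde{\Delta}^-_{\aff}$, so $|N(w)| = \ell(w)$, and recall the standard decomposition $N(xy) = N(y)\sqcup y^{-1}N(x)$ for a length-additive product. From the reduced word one reads $N(m_{\lambda_-}) = \{\beta^{\OS}_1,\ldots,\beta^{\OS}_L\}$, and since the tail $y$ contributes exactly the roots attached to its own letters, $N(y) = \{\beta^{\OS}_{M+1},\ldots,\beta^{\OS}_L\} = \{\gamma_{M+1},\ldots,\gamma_L\}$, whence $y^{-1}N(x) = \{\gamma_1,\ldots,\gamma_M\}$. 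On the other hand, the first coordinate of $\Phi$ equals $1 - \dg(\beta)/\langle\lambda_-,\overline{\beta}\rangle$, which vanishes precisely on the $M$ roots $\alpha^\lor + \langle\lambda_-,\alpha^\lor\rangle\widetilde{\delta}$ with $\alpha\in\Delta^-\setminus\Delta^-_S$; as these are $\prec'$-minimal they are exactly $\gamma_1,\ldots,\gamma_M$. A direct computation with $m_\lambda^{-1} = v(\lambda_-)\,t(-\lambda)$ shows that $m_\lambda^{-1}N(v(\lambda_-))$ is precisely this set of first-coordinate-zero roots (here one uses $N(v(\lambda_-))\subset\Delta^+\setminus\Delta^+_S$ because $v(\lambda_-)\in W^S$, together with $v(\lambda_-)\lambda = \lambda_-$). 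Hence $N(m_\lambda) = \{\gamma_{M+1},\ldots,\gamma_L\} = N(y)$. Since $x, y$ and $v(\lambda_-), m_\lambda$ all have the same $\Omega$-component $u$, equality of inversion sets forces the $\widetilde{W}_{\aff}$-parts to coincide, so $y = m_\lambda$ and therefore $x = m_{\lambda_-}\,m_\lambda^{-1} = v(\lambda_-)$. This establishes that $s_{i'_1}\cdots s_{i'_M}$ is a reduced expression for $v(\lambda_-)$ (it has $M = \ell(v(\lambda_-))$ letters) and that $u s_{\ell_{M+1}}\cdots s_{\ell_L} = m_\lambda$.

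For the letter-by-letter identity $i_k = i'_k$, I would compare the inversion sequences of the two reduced words for $v(\lambda_-)$, using the fact that the sequence of inversion roots of a reduced word determines its letters. Conjugating as above yields $\gamma_k = m_\lambda^{-1}\bigl(s_{i'_M}\cdots s_{i'_{k+1}}\alpha^\lor_{i'_k}\bigr)$ for $1\le k\le M$, so the coroot $\delta'_k := s_{i'_M}\cdots s_{i'_{k+1}}\alpha^\lor_{i'_k}$ satisfies $\overline{\gamma_k} = v(\lambda_-)\,\delta'_k$. Feeding this into the second coordinate of $\Phi$ and using that, among the first-coordinate-zero roots, $\prec'$ orders the second coordinates by the reverse of $\prec$, I obtain $\lon\,\overline{\gamma_k}^{\lor} = \beta_k$, that is $\delta'_k = (v(\lambda_-)^{-1}\lon\,\beta_k)^{\lor}$. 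Now $v(\lambda_-) = \lon\lons$ gives $v(\lambda_-)^{-1}\lon = \lons$, so $\delta'_k = (\lons\beta_k)^{\lor}$. On the other hand, the matching inversion root of the fixed expression $v(\lambda_-) = s_{i_1}\cdots s_{i_M}$ is $\delta_k = s_{i_M}\cdots s_{i_{k+1}}\alpha_{i_k} = \lons\beta_k$, by $\beta_k = \lons\delta_k$. Thus $\delta'_k = (\delta_k)^{\lor}$ for every $k$; since simple roots correspond to simple coroots under $W\cong\widetilde{W}$ and the inversion sequence recovers the word, I conclude $i'_k = i_k$ for $1\le k\le M$.

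The main obstacle is the careful bookkeeping rather than any single hard idea: one must keep straight the identification $W\cong\widetilde{W}$ together with the root/coroot duality between $\Delta$ and $\widetilde{\Delta}$, and must handle the length-zero component $u\in\Omega$ correctly, since equality of the sets $N(\cdot)$ only pins down an element of $\widetilde{W}_{\ext}$ after the $\Omega$-parts have been matched, which is exactly why checking that both $y$ and $m_\lambda$ lie in $u\,\widetilde{W}_{\aff}$ is essential. The conceptual heart of the argument, that the $\prec'$-minimal inversions are precisely the ``$v(\lambda_-)$-inversions'' $m_\lambda^{-1}N(v(\lambda_-))$, is the content encoded in the vanishing of the first coordinate of $\Phi$.
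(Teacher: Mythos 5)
The paper itself contains no proof of Lemma~\ref{lengthadditive}: it is imported verbatim from \cite[Lemma 3.1.10]{NNS}, just as Remark~\ref{rem:2.15} is attributed to the proof of that same lemma. So there is no in-paper argument to compare against, and your proposal must stand on its own; on its own it is correct and essentially complete. Your route --- recognize the length-additive factorization $m_{\lambda_-}=v(\lambda_-)\,m_\lambda$ from Lemma~\ref{vmu}(3) with $\mu=\lambda$, split the given reduced word into $x=s_{i'_1}\cdots s_{i'_M}$ and $y=u s_{\ell_{M+1}}\cdots s_{\ell_L}$, and then identify $x=v(\lambda_-)$, $y=m_\lambda$ by matching inversion sets, using that the first coordinate of $\Phi$ vanishes exactly on the $M$ roots which are $\prec'$-minimal and that these coincide with $m_\lambda^{-1}N(v(\lambda_-))$ --- delivers all three assertions at once. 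A nice feature is that your identity $\lon\,\overline{\gamma_k}^{\lor}=\beta_k$ for $1\le k\le M$ is \emph{re-derived} from Proposition~\ref{goodreducedexpression} and the definition of $\prec'$, rather than quoted from Remark~\ref{rem:2.15}(2); since the paper cites that remark from the proof of the very lemma you are proving, this is exactly what is needed to avoid circularity. The final letter-by-letter recovery $i'_k=i_k$ from equality of the two inversion sequences (via $\delta'_k=\delta_k^{\lor}$ and the fact that an inversion sequence determines its reduced word) is also sound.

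Two blemishes, neither fatal, should be repaired. First, the set on which the first coordinate of $\Phi$ vanishes is $\{\alpha^\lor+\langle\lambda_-,\alpha^\lor\rangle\widetilde{\delta}\mid \alpha\in\lon(\Delta^+\setminus\Delta^+_S)\}$, and $\lon(\Delta^+\setminus\Delta^+_S)=\Delta^-\setminus\Delta^-_{S^*}$, where $S^*=\{i^*\mid i\in S\}$ and $*$ is the Dynkin diagram automorphism determined by $\lon\alpha_i=-\alpha_{i^*}$; this is \emph{not} $\Delta^-\setminus\Delta^-_S$ unless $S$ is $*$-stable (it already fails for $\lambda=\varpi_1$ in type $A_2$, where $S=\{2\}$ but the relevant set is $\{-\alpha_2,-(\alpha_1+\alpha_2)\}$). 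Your downstream computation is unaffected, because what you actually use is only that this set has $M$ elements, is $\prec'$-minimal, and equals $m_\lambda^{-1}N(v(\lambda_-))$ --- all true for the correct set --- but the description should be corrected. Second, the step ``$N(y)=N(m_\lambda)$ forces $y=m_\lambda$'' needs the one-line verification you only allude to: $m_\lambda=v(\lambda_-)^{-1}m_{\lambda_-}\in u\widetilde{W}_{\aff}$ by normality of $\widetilde{W}_{\aff}$ in $\widetilde{W}_{\ext}$, so that $y\,m_\lambda^{-1}\in\Omega\cap\widetilde{W}_{\aff}=\{e\}$ once the inversion sets agree; note also that $x$ and $v(\lambda_-)$ have $\Omega$-component $e$, not $u$ as your phrasing suggests. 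With these repairs the argument is a complete, self-contained proof of the cited lemma.
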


%

\begin{rem}[\normalfont{\cite[Theorem 8.3]{2}}, \normalfont{\cite[proof of Lemma 3.1.10]{NNS}}]\label{2.15}\label{rem:2.15}
\mbox{}%
\begin{enu}
\item
For $1 \leq k \leq L$, we set
	\begin{equation*}
		d_k \eqdef
			 \frac{\langle \lambda_- ,  \overline{\beta^{\OS}_k }\rangle - a_k}{\langle \lambda_- ,  \overline{\beta^{\OS}_k }\rangle}.  
	\end{equation*}
Here $d_k$ is just the first component of $\Phi(\beta^{\OS}_k) \in \mathbb{Q}_{\geq 0} \times (\Delta^+ \setminus \Delta^+_S)$.
For  $1 \leq k,j \leq L$,
$\Phi(\beta^\OS_k)<\Phi(\beta^\OS_j)$ if and only if $k < j$,
and hence we have
	\begin{equation}\label{C}
		0\leq d_1 \leq \cdots \leq d_L \lneqq 1.
	\end{equation}
Also, we have
\begin{equation}\label{3.1.9}
d_i = 0  \  \Leftrightarrow \ i \leq M.
\end{equation}
\item
For $1 \leq k \leq M$, 
\begin{equation}\label{lemm:akbk}
\lon \overline{\beta^\OS_k}^\lor = \beta_k.
\end{equation}
\end{enu}
\end{rem}

\begin{lemm}[\normalfont{\cite[Lemma 3.1.12]{NNS}}]\label{remark2.11}
If $1\leq k<j \leq L$ and $d_k =d_j$,
then	$\lon  \overline{ \beta^{\OS}_k }^{\lor} \succ \lon  \overline{ \beta^{\OS}_j }^{\lor}$.
\end{lemm}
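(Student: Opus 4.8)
The plan is to deduce the inequality directly from the construction of the total order $\prec'$ together with the definition of the map $\Phi$; essentially all the content has already been packaged into those definitions, so no real computation is required.

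First I would recall that, by Proposition~\ref{goodreducedexpression}, the reduced expression $m_{\lambda_-}=u s_{\ell_1}\cdots s_{\ell_L}$ is chosen so that $\beta^\OS_j=\gamma_j$ for every $1\leq j\leq L$, where $\gamma_1\prec'\cdots\prec'\gamma_L$ is the increasing enumeration of $\widetilde{\Delta}^+_{\aff}\cap m^{-1}_{\lambda_-}\widetilde{\Delta}^-_{\aff}$ with respect to $\prec'$. Since $\prec'$ is by definition the order pulled back through $\Phi$ from the lexicographic order $<$ on $\mathbb{Q}_{\geq 0}\times(\Delta^+\setminus\Delta^+_S)$, this says precisely that $k<j$ is equivalent to $\Phi(\beta^\OS_k)<\Phi(\beta^\OS_j)$; this equivalence is also recorded in Remark~\ref{rem:2.15}(1).

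Next I would spell out the two coordinates of $\Phi(\beta^\OS_k)$. By the definition of $\Phi$ and of $d_k$, the first coordinate of $\Phi(\beta^\OS_k)$ is $d_k$, and its second coordinate is $\lon\overline{\beta^\OS_k}^\lor$; the same holds with $k$ replaced by $j$. Now assume $1\leq k<j\leq L$ and $d_k=d_j$. The previous step gives $\Phi(\beta^\OS_k)<\Phi(\beta^\OS_j)$, while the first coordinates coincide. By the definition of the lexicographic order $<$, equality of first coordinates forces the comparison to fall back on the second coordinates; and since the second factor is ordered by the \emph{reverse} of the restriction of $\prec$ to $\Delta^+\setminus\Delta^+_S$, the relation $\Phi(\beta^\OS_k)<\Phi(\beta^\OS_j)$ translates into $\lon\overline{\beta^\OS_k}^\lor\succ\lon\overline{\beta^\OS_j}^\lor$, as desired.

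The one place demanding care is bookkeeping the reversal in the second factor: because the smaller index $k$ yields the lexicographically smaller pair $\Phi(\beta^\OS_k)$, and the $\prec$-order is reversed on the second coordinate, the smaller pair must carry the $\prec$-\emph{larger} root $\lon\overline{\beta^\OS_k}^\lor$, producing $\succ$ (not $\prec$) in the conclusion. I expect no genuine obstacle beyond correctly tracking this reversal.
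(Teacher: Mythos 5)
Your proof is correct, and it is essentially the intended argument: the paper itself only cites \cite[Lemma 3.1.12]{NNS} for this statement, but in the present setup the lemma is exactly the definitional unwinding you give — Proposition~\ref{goodreducedexpression} makes the $\beta^{\OS}_k$ increasing with respect to $\prec'$, Remark~\ref{rem:2.15}\,(1) identifies $d_k$ as the first coordinate of $\Phi(\beta^{\OS}_k)$, and equality of first coordinates forces the lexicographic comparison onto the second coordinates, where the order is the \emph{reverse} of $\prec$, yielding $\lon\overline{\beta^{\OS}_k}^{\lor}\succ\lon\overline{\beta^{\OS}_j}^{\lor}$. You also correctly flag the only delicate point, namely the direction reversal in the second factor, so there is no gap.
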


\subsection{Proof of Theorem \ref{theorem_graded_character}}

Let $\lambda \in P^+$ be a dominant weight and $\mu \in W\lambda$;
recall that 
$S = \{ i \in I \ | \ \langle \lambda , \alpha^\lor_i \rangle =0 \}$ and
$\lambda_- = \lon \lambda$.

We fix reduced expressions
for $v(\lambda_-)v(\mu)^{-1}$, $v(\mu)$, and $\lons$, and the total order $\prec$ on $\Delta^+$ as at the beginning of \S 2.2.
In addition, we use the total order $\prec'$ on $\widetilde{\Delta}^+_\aff \cap m_{\lambda_-}^{-1}
\widetilde{\Delta}^-_\aff$ defined just before Proposition
\ref{goodreducedexpression},
and take the reduced expression 
$m_{\lambda_-} = u s_{\ell_1}\cdots s_{\ell_L}$
for $m_{\lambda_-}$ given by Proposition \ref{goodreducedexpression};
recall that
$u s_{\ell_k} = s_{i_k} u$ for $1 \leq k \leq M$.
It follows from Lemma \ref{vmu} (3)
that $\left(v(\mu) v(\lambda_-)^{-1} \right)m_{\lambda_-} = m_{\mu}$
and
$- \ell(v(\mu) v(\lambda_-)^{-1} ) + \ell(m_{\lambda_-})  
=  \ell(m_{\mu})$.
Furthermore, we see that
\begin{align*}
 \left( v(\mu) v(\lambda_-)^{-1} \right)   m_{\lambda_-}
&=
\left( s_{i_{K}}\cdots s_{i_1} \right)  us_{\ell_{1}} \cdots s_{\ell_L}\\
&=
u s_{\ell_{K}}\cdots s_{\ell_1} s_{\ell_1} \cdots s_{\ell_L}
=
us_{\ell_{K+1}}\cdots s_{\ell_L} \quad (\mathrm{by\ Lemma\ } \ref{lengthadditive}),
\end{align*}
and hence that 
$m_{\mu}=us_{\ell_{K+1}}\cdots s_{\ell_L}$
is a reduced expression for $m_{\mu}$.
In particular, when $\mu = \lambda$
(note that $v(\lambda)=e$), $m_{\lambda}=us_{\ell_{M+1}}\cdots s_{\ell_L}$
is a reduced expression for $m_{\lambda}$.

Also,
recall from \S 2.2 and \S 3.1 that
$\beta_k = s_{i_N} \cdots s_{i_{k+1}}\alpha_{i_k}$ for 
$1 \leq k \leq N$,
and
$\beta^\OS_k = s_{\ell_L} \cdots s_{\ell_{k+1}}\alpha^\lor_{\ell_k}$ 
for $1 \leq k \leq L$.

\begin{rem}\label{inversionsets}
Keep the notation above.
We have
\begin{align*}
\widetilde{\Delta}^+_{\aff} \cap m^{-1}_{\lambda_-} \widetilde{\Delta}^-_{\aff} &= \{ \beta^\OS_1 , \ldots , \beta^\OS_L \}, \\
\widetilde{\Delta}^+_{\aff} \cap m^{-1}_{\mu} \widetilde{\Delta}^-_{\aff} &= \{ \beta^\OS_{K+1} , \ldots , \beta^\OS_L \}, \\
\widetilde{\Delta}^+_{\aff} \cap m^{-1}_{\lambda} \widetilde{\Delta}^-_{\aff} &= \{ \beta^\OS_{M+1} , \ldots , \beta^\OS_L \}, \\
\lons \left(\Delta^+ \cap v(\mu)\Delta^-\right) &= \{ \beta_{K+1}, \ldots , \beta_{M}\}.
\end{align*}
In particular, we have 
$\widetilde{\Delta}^+_{\aff} \cap m^{-1}_{\lambda}\widetilde{\Delta}^-_{\aff}
\subset
\widetilde{\Delta}^+_{\aff} \cap m^{-1}_{\mu} \widetilde{\Delta}^-_{\aff}
\subset
\widetilde{\Delta}^+_{\aff} \cap m^{-1}_{\lambda_-} \widetilde{\Delta}^-_{\aff}$.
\end{rem}

\begin{lemm}[\normalfont{\cite[(2.4.7) (i)]{M}}]\label{shortest}
If we denote by $\phi$ the characteristic function of $\Delta^-$, i.e.,
\begin{align*}
\phi(\gamma) \eqdef
\left\{
			\begin{array}{ll}
				0  & \ \ \mathrm{if} \ \gamma \in \Delta^+  ,  \\
				1  & \ \ \mathrm{if} \ \gamma \in \Delta^-  ,
			\end{array}
\right.
\end{align*}
then we have
\begin{equation*}
\widetilde{\Delta}^+_{\aff} \cap m^{-1}_{\mu} \widetilde{\Delta}^-_{\aff}
=
\{
\alpha^\lor + a\widetilde{\delta}
\ | \
\alpha \in \Delta^-,
0 < a < \phi(v(\mu) v(\lambda_-)^{-1} \alpha) +\langle  \lambda_- ,  \alpha^\lor \rangle
\}.
\end{equation*}
\end{lemm}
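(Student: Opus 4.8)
The plan is to deduce the general-$\mu$ statement from its special case $\mu=\lambda_-$, which is precisely \eqref{B}, by means of the length-additive factorization of $m_{\lambda_-}$ furnished by Lemma~\ref{vmu}(3). Throughout, write $N(w)\eqdef\widetilde{\Delta}^+_{\aff}\cap w^{-1}\widetilde{\Delta}^-_{\aff}$ for the inversion set of $w\in\widetilde{W}_{\ext}$, set $g\eqdef v(\lambda_-)v(\mu)^{-1}\in W$, and abbreviate $v\eqdef\dir(m_\mu)=v(\mu)v(\lambda_-)^{-1}=g^{-1}$. By Lemma~\ref{vmu}(3) we have $m_{\lambda_-}=g\,m_\mu$ with $\ell(g)+\ell(m_\mu)=\ell(m_{\lambda_-})$. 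The standard identity $N(xy)=N(y)\sqcup y^{-1}N(x)$, valid whenever $\ell(xy)=\ell(x)+\ell(y)$ (and which also gives $y^{-1}N(x)\subset\widetilde{\Delta}^+_{\aff}$), then yields $N(m_{\lambda_-})=N(m_\mu)\sqcup m_\mu^{-1}N(g)$, so that $N(m_\mu)=N(m_{\lambda_-})\setminus m_\mu^{-1}N(g)$. Since $g\in W$ is finite and fixes $\widetilde{\delta}$, one has $N(g)=\{\gamma^\lor \mid \gamma\in\Delta^+,\ g\gamma\in\Delta^-\}$. It thus remains to locate, inside the explicit set $N(m_{\lambda_-})$ given by \eqref{B}, the subset $m_\mu^{-1}N(g)$.

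The key computation is the action of $m_\mu$ on a real affine root. Writing $m_\mu=t(\mu)v=v\,t(v^{-1}\mu)$ and using the action formula of \S 3.1, for $\beta=\alpha^\lor+a\widetilde{\delta}$ we get $m_\mu\beta=(v\alpha)^\lor+\bigl(a-\langle v^{-1}\mu,\alpha^\lor\rangle\bigr)\widetilde{\delta}$. Since $\mu=v(\mu)\lambda$ and $W_S$ fixes $\lambda$, applying $W$-invariance of the pairing together with $v(\lambda_-)^{-1}=\lons\lon$ gives $\langle v^{-1}\mu,\alpha^\lor\rangle=\langle\lambda,\lon\alpha^\lor\rangle=\langle\lambda_-,\alpha^\lor\rangle$, whence
\[
m_\mu(\alpha^\lor+a\widetilde{\delta})=(v\alpha)^\lor+\bigl(a-\langle\lambda_-,\alpha^\lor\rangle\bigr)\widetilde{\delta}.
\]
Now take $\beta=\alpha^\lor+a\widetilde{\delta}\in N(m_{\lambda_-})$, so that $\alpha\in\Delta^-$ and $0<a\leq\langle\lambda_-,\alpha^\lor\rangle$ by \eqref{B}. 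Then $\beta\in m_\mu^{-1}N(g)$ iff $m_\mu\beta\in N(g)$, i.e.\ iff $m_\mu\beta$ is a finite positive coroot $\gamma^\lor$ with $g\gamma\in\Delta^-$. By the displayed formula this forces the $\widetilde{\delta}$-coefficient to vanish, i.e.\ $a=\langle\lambda_-,\alpha^\lor\rangle$, and $\gamma=v\alpha\in\Delta^+$; the remaining condition $g\gamma\in\Delta^-$ is then automatic, since $g\gamma=gv\alpha=\alpha\in\Delta^-$ (recall $v=g^{-1}$). Hence $m_\mu^{-1}N(g)=\{\alpha^\lor+\langle\lambda_-,\alpha^\lor\rangle\widetilde{\delta}\mid \alpha\in\Delta^-,\ v\alpha\in\Delta^+\}$.

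Finally, removing this set from $N(m_{\lambda_-})$ produces precisely the bound involving $\phi$. Fix $\alpha\in\Delta^-$. If $v\alpha\in\Delta^+$, so $\phi(v\alpha)=0$, the top value $a=\langle\lambda_-,\alpha^\lor\rangle$ is deleted and we are left with $0<a<\langle\lambda_-,\alpha^\lor\rangle$; if $v\alpha\in\Delta^-$, so $\phi(v\alpha)=1$, nothing is deleted and we keep $0<a\leq\langle\lambda_-,\alpha^\lor\rangle$, i.e.\ $0<a<1+\langle\lambda_-,\alpha^\lor\rangle$. Both are uniformly expressed as $0<a<\phi(v\alpha)+\langle\lambda_-,\alpha^\lor\rangle$ with $v=v(\mu)v(\lambda_-)^{-1}$, which is the assertion. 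I expect the boundary value $a=\langle\lambda_-,\alpha^\lor\rangle$ to be the only delicate point: it is the single place where the sign of $v\alpha$ (hence $\phi$) intervenes, and it is also where one must check that roots with $\alpha\in\Delta^+$ never enter $N(m_\mu)$ — consistent with $m_\mu$ being the minimal-length element of $m_\mu W$. As an independent check, the same conclusion follows by a direct computation: $\alpha^\lor+a\widetilde{\delta}\in N(m_\mu)$ iff $\alpha^\lor+a\widetilde{\delta}\in\widetilde{\Delta}^+_{\aff}$ and $(v\alpha)^\lor+(a-\langle\lambda_-,\alpha^\lor\rangle)\widetilde{\delta}\in\widetilde{\Delta}^-_{\aff}$, and unwinding these two membership conditions reproduces the same bound.
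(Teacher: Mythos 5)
Your proof is correct, but there is nothing in the paper to compare it with: the paper does not prove Lemma~\ref{shortest} at all, it quotes it from Macdonald's book \cite[(2.4.7)\,(i)]{M}, exactly as it quotes the special case \eqref{B} (which is the same formula of \cite{M} at $\mu=\lambda_-$, where $v(\mu)v(\lambda_-)^{-1}=e$ and $\phi\equiv 1$ on $\Delta^-$). What you have supplied is a genuine derivation of the general-$\mu$ statement from that special case, using only ingredients internal to the paper: the length-additive factorization $m_{\lambda_-}=\bigl(v(\lambda_-)v(\mu)^{-1}\bigr)m_\mu$ of Lemma~\ref{vmu}\,(3), the cocycle identity $N(xy)=N(y)\sqcup y^{-1}N(x)$ for inversion sets under length-additive products (which does persist in $\widetilde{W}_{\ext}$, since length there is still the number of inversions among positive real roots), and the computation $m_\mu(\alpha^\lor+a\widetilde{\delta})=(v\alpha)^\lor+\bigl(a-\langle\lambda_-,\alpha^\lor\rangle\bigr)\widetilde{\delta}$ with $v=v(\mu)v(\lambda_-)^{-1}$, which rests on $v^{-1}\mu=v(\lambda_-)\lambda=\lambda_-$. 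Your key step, that $m_\mu^{-1}N(g)$ for $g=v(\lambda_-)v(\mu)^{-1}$ consists exactly of the boundary elements $\alpha^\lor+\langle\lambda_-,\alpha^\lor\rangle\widetilde{\delta}$ with $\alpha\in\Delta^-$ and $v\alpha\in\Delta^+$, is right; and working inside $N(m_{\lambda_-})$ (legitimate, since the cocycle identity gives $m_\mu^{-1}N(g)\subset N(m_{\lambda_-})$) automatically disposes of the two delicate points you flag at the end: roots with $\alpha\in\Delta^+$ can never enter $N(m_\mu)\subset N(m_{\lambda_-})$, and the degenerate value $\langle\lambda_-,\alpha^\lor\rangle=0$ cannot occur for elements of $N(m_{\lambda_-})$, because there $0<a\le\langle\lambda_-,\alpha^\lor\rangle$. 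The trade-off is clear: the citation is shorter and reflects that in \cite{M} both statements are one formula specialized twice, whereas your argument makes the paper self-contained modulo the single input \eqref{B}, and it shows transparently how the correction term $\phi\bigl(v(\mu)v(\lambda_-)^{-1}\alpha\bigr)$ arises, namely from deleting the inversions of $v(\lambda_-)v(\mu)^{-1}$ sitting at the top values $a=\langle\lambda_-,\alpha^\lor\rangle$.
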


\begin{rem}\label{affinereflectionorder}
Let
$\gamma_1, \gamma_2 ,\ldots, \gamma_r \in \widetilde{\Delta}^+_{\aff} \cap m^{-1}_\mu \widetilde{\Delta}^-_{\aff}$,
and 
consider the sequence 
$\left( y_0 , y_1 , \ldots , y_r ; \gamma_1 , \gamma_2, \ldots , \gamma_r \right)$
given by
$y_0=m_\mu$, and $y_i =y_{i-1}s_{\gamma_i}$ for $1 \leq i \leq r$.
Then, the sequence 
$\left( y_0 , y_1 , \ldots , y_r ; \gamma_1 , \gamma_2, \ldots , \gamma_r \right)$
is an element of 
$\B(w; m_{\mu})$
if and only if the following conditions hold:

\begin{enu}
\item
$\gamma_1\prec' \gamma_2 \prec' \cdots \prec' \gamma_r$,
where the order $\prec'$ is 
the total order on $ \widetilde{\Delta}^+_{\aff} \cap m^{-1}_\mu \widetilde{\Delta}^-_{\aff}$
introduced at the beginning of \S 3.2;

\item
$\dir(y_{i-1}) \xleftarrow{- \overline{\gamma_i}^\lor} \dir(y_{i})$
is
an edge of $\DBG$ for $1\leq i \leq r$.
\end{enu}
\end{rem}

In the following,
we will define a map
$\Xi : \B(w; m_{\mu}) \rightarrow w(\PQLS^\mu (\lambda))$ for $w \in W$.
Let $p^{\OS}_{J}$ be an arbitrary element of  $\B(w; m_{\mu})$ of the form: 
	\begin{equation*}
		p^{\OS}_{J} = \left(w m_{\mu} = z^{\OS}_0 ,  z^{\OS}_{1} , \ldots , z^{\OS}_{r}
; \beta^{\OS}_{j_1} , \beta^{\OS}_{j_2}, \ldots , \beta^{\OS}_{j_r} \right) \in \B(w; m_{\mu}),
	\end{equation*}
with $J = \{ j_1 < \cdots < j_r\} \subset \{ K+1 , \ldots , L \}$.
	We set $x_k \eqdef {\dir}(z^{\OS}_k)$ for 
	$0 \leq k \leq r$. Then, by the definition of $\B(w; m_{\mu})$, 
	\begin{equation}\label{2.15.5}
		wv(\mu)v(\lambda_-)^{-1} = \dir (wm_\mu) =  x_0 \xrightarrow{-  \overline{ \beta^{\OS}_{j_1} }^{\lor}  }   x_1
		\xrightarrow{-  \overline {\beta^{\OS}_{j_2} }^{\lor} } \cdots  \xrightarrow{ - \overline{ \beta^{\OS}_{j_r} }^{\lor} }  x_r
	\end{equation}
	is a directed path in $\DBG$; here the first equality follows from Lemma \ref{vmu}.
	We take $0 = u_0 \leq u_1 < \cdots < u_{s-1} < u_s=r$ and $0 \leq \sigma_1 < \cdots <\sigma_{s-1} < 1 = \sigma_{s}$ in such a way that
(see (\ref{C}))
	\begin{equation}\label{2.16}
		 \underbrace{0 = d_{j_1} = \cdots = d_{j_{u_1}} }_{=\sigma_0}
		< \underbrace{d_{j_{u_1 +1}} = \cdots =d_{j_{u_2}}}_{=\sigma_1} < \cdots <
		\underbrace{ d_{j_{u_{s-1}+1}} = \cdots =d_{j_r} }_{=\sigma_{s-1}} <1 = \sigma_{s};
	\end{equation}
	note that $d_{j_1}>0$ if and only if $u_1=0$.
	We set
	$w'_{p, m} \eqdef x_{u_p + m}$ for $0 \leq p \leq s-1$, $0 \leq m \leq t_p \eqdef  u_{p+1} - u_{p}$. 
	Then, by taking a subsequence of (\ref{2.15.5}), we obtain the following  directed path in $\DBG$
	for each $0 \leq p \leq s-1$:
	\begin{equation*}
		w'_{p, 0}  \xrightarrow{-  \overline{ \beta^{\OS}_{j_{u_p +1}} } ^{\lor} }   w'_{p,1} 
		\xrightarrow{-  \overline{  \beta^{\OS}_{j_{u_p +2}} }^{\lor} } \cdots  
		\xrightarrow{ - \overline{  \beta^{\OS}_{j_{u_{p+1}}} }^{\lor} }  w'_{p, t_p};
	\end{equation*}
note that $w'_{p, t_p} = w'_{p+1,0}$ for $0 \leq  p \leq s-2$.
We set $w_{p,m} = w'_{p,m}\lon$ for $0 \leq p \leq s-1, 0\leq m \leq t_p$.
	Multiplying this directed path by $\lon$ on the right, 
	we obtain the following directed path in $\DBG$
	for each $0 \leq p \leq s-1$
	(see Lemma \ref{involution} (2)):
	\begin{equation}\label{w'_p}
		w_{p,0} \eqdef
		w'_{p,0} \lon  \xleftarrow{ \lon \overline{ \beta^{\OS}_{j_{u_p +1}} }^{\lor} }  \cdots  
		\xleftarrow{ \lon \overline{ \beta^{\OS}_{j_{u_{p+1}}} }^{\lor} }   w'_{p+1}\lon  \eqdef w_{p, t_p}.
	\end{equation}
	Note that the edge labels of this directed path are increasing
in the total order $\prec$ on $\Delta^+$ introduced at the beginning of \S 2.2
(see Lemma \ref{remark2.11}), 
	  and that they lie in $\Delta^+ \setminus \Delta^+_S$;
	this property will be used to prove Proposition \ref{bijective}.
	Because
	\begin{equation*}
		\sigma_p \langle \lambda , \lon \overline{ {\beta^{\OS}_{j_u}} }   \rangle
		=
		d_{j_u} \langle \lambda , \lon \overline{ {\beta^{\OS}_{j_u}} }   \rangle
		=
		\frac{\langle \lambda_- ,  \overline{ \beta^{\OS}_{j_u}} \rangle -a_{j_u}}{\langle \lambda_- ,  \overline{ \beta^{\OS}_{j_u} }   \rangle}
		\langle \lambda_- , \overline{ \beta^{\OS}_{j_u} }   \rangle
		=
		 \langle \lambda_- ,  \overline{ \beta^{\OS}_{j_u}} \rangle -a_{j_u}  \in \mathbb{Z}
	\end{equation*}
	for $u_p +1 \leq u \leq u_{p+1}$, $0 \leq p \leq s-1$,
	we find that (\ref{w'_p}) is a  directed  path in $\DBG_{\sigma_p \lambda}$ for $0 \leq p \leq s-1$.
	Thus we obtain 
	\begin{equation}\label{D}
\widetilde{\eta}\eqdef
 \left(
\left( 
w_{p, 0} \xleftarrow{\beta_{p,1}} \cdots \xleftarrow{\beta_{p,t_p}}w_{p, t_p}
\right)_{p=0, \ldots , s-1};
\sigma_0, \ldots , \sigma_s \right)
 \in w(\PQLS^\mu(\lambda));
	\end{equation}
	here we set $\beta_{p,m}\eqdef \lon \overline{\beta^\OS_{u_p+m}}^\lor$ for $p \leq p \leq s-1$, $1 \leq m \leq t_p$.
Indeed,
condition (i) in Definition \ref{defi:pQLS} has been checked above;
condition (ii) is obvious since $w'_{p, t_p} = w'_{p+1,0}$ for $0 \leq p \leq s-2$. 
Also,
for each $1 \leq m \leq  t_p$, we see that $\beta_{0,m}= \lon \overline{\beta^\OS_j}^\lor$ for some $1 \leq j \leq M$ 
and $\lon\overline{\beta^\OS_j}^\lor = \beta_{j}$ by Remark 
\ref{rem:2.15} (2) and (\ref{3.1.9});
here,
$\beta_j \in \lons (\Delta^+ \cap v(\mu)^{-1} \Delta^-)$ by Remark \ref{inversionsets}.
Hence we obtain 
$\lons \beta_{0,m}\in \Delta^+ \cap v(\mu)^{-1} \Delta^-$, which implies that $\widetilde{\eta}$ satisfies condition (iii).
Also,
since
$x_0 = wv(\mu)v(\lambda_-)^{-1}$, we have 
\begin{equation*}
w_{0,0} = x_0 \lon = wv(\mu)v(\lambda_-)^{-1}\lon = w v(\mu) \lons 
\end{equation*}
by Lemma \ \ref{vmu} \ (2).
Therefore, the pair
	\begin{equation*}
w^{-1} \widetilde{\eta}=
 \left(
\left( 
w^{-1} w_{k, 0} \xleftarrow{\beta_{k,1}} \cdots \xleftarrow{\beta_{k,t_k}}w^{-1} w_{k, t_k}
\right)_{k=0, \ldots , s-1};
\sigma_0, \ldots , \sigma_s \right)
	\end{equation*}
satisfies conditions (i)-(iv), and hence $w^{-1}\widetilde{\eta}\in \PQLS^\mu(\lambda)$.

We now define $\Xi (p^{\OS}_{J}) \eqdef \widetilde{\eta}$.

\begin{pro}\label{bijective}
		The map $\Xi:\B(w; m_\mu) \rightarrow w(\PQLS^\mu(\lambda) )$
is bijective.
\end{pro}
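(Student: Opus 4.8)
The plan is to show that $\Xi$ is a bijection by constructing an explicit inverse, which amounts to reversing each step of the construction preceding the statement. The map $\Xi$ was built by: (a) taking an alcove path $p^{\OS}_J \in \B(w; m_\mu)$ with $J = \{j_1 < \cdots < j_r\} \subset \{K+1,\ldots,L\}$; (b) reading off the directed path \eqref{2.15.5} in $\DBG$ via the direction map $\dir$; (c) grouping the edges according to the common values of $d_{j_k}$, as in \eqref{2.16}, which produces the sequence $\sigma_0 < \cdots < \sigma_s$; (d) multiplying on the right by $\lon$ and applying Lemma \ref{involution}(2) to reverse the arrows, yielding $\widetilde{\eta} \in w(\PQLS^\mu(\lambda))$. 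Since each of these operations is individually reversible, the natural strategy is to define a candidate inverse $\Xi^{-1}$ by undoing (d)--(a) in turn, and then to verify that both composites are the identity.

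First I would address \emph{injectivity}. Given $\widetilde{\eta} = \bigl((w_{p,0} \xleftarrow{\beta_{p,1}} \cdots \xleftarrow{\beta_{p,t_p}} w_{p,t_p})_{p}; \sigma_0,\ldots,\sigma_s\bigr)$ in the image, one recovers the data of $p^{\OS}_J$ as follows. Multiplying by $\lon$ on the right and reversing arrows (again via Lemma \ref{involution}(2)) returns the directed path \eqref{2.15.5} in $\DBG$ together with its edge labels $-\overline{\beta^\OS_{j_k}}^\lor$. The crucial point is that the label $\beta_{p,m} = \lon \overline{\beta^\OS_{j}}^\lor$ together with the value $\sigma_p$ determines the index $j = j_{u_p + m}$ \emph{uniquely}: by Remark \ref{rem:2.15}(1), the first component $d_j$ of $\Phi(\beta^\OS_j)$ equals $\sigma_p$, while the second component $\lon\overline{\beta^\OS_j}^\lor$ equals $\beta_{p,m}$, and $\Phi$ is injective. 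Hence the pair $(\sigma_p, \beta_{p,m})$ pins down $\beta^\OS_j$, and therefore $j$. This recovers $J$ and the full sequence of reflections, so $\widetilde{\eta}$ determines $p^{\OS}_J$; thus $\Xi$ is injective.

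Next I would establish \emph{surjectivity}. Starting from an arbitrary $w\widetilde{\eta} \in w(\PQLS^\mu(\lambda))$, I would read off the labels $\beta_{p,m} \in \Delta^+ \setminus \Delta^+_S$ and the rational numbers $\sigma_p$, and use the injectivity of $\Phi$ to define, for each $(p,m)$, the unique affine root $\gamma_{p,m} \in \widetilde{\Delta}^+_\aff \cap m^{-1}_{\lambda_-}\widetilde{\Delta}^-_\aff$ with $\Phi(\gamma_{p,m}) = (\sigma_p, \beta_{p,m})$; this $\gamma_{p,m}$ is some $\beta^\OS_j$ with $j \geq K+1$ (using Remark \ref{inversionsets} and condition (iii), which guarantees the $p=0$ labels land in the range $\{K+1,\ldots,M\}$). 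Collecting the resulting indices into a set $J$ and ordering them, I must check two things: that $J$ is a valid index set (the indices are distinct and increasing, which follows from Lemma \ref{remark2.11} within each block $\sigma_p$ and from \eqref{C} across blocks), and that the associated sequence $(wm_\mu = z^\OS_0, \ldots, z^\OS_r; \beta^\OS_{j_1},\ldots,\beta^\OS_{j_r})$ genuinely lies in $\B(w; m_\mu)$. The latter is exactly the content of the characterization in Remark \ref{affinereflectionorder}: condition (1) there (the $\prec'$-increasing property) follows from the construction of $\prec'$ through $\Phi$ together with \eqref{C} and Lemma \ref{remark2.11}, and condition (2) (that each step is a $\DBG$-edge) follows by reversing the $\lon$-multiplication and arrow-reversal, since $\widetilde{\eta}$ is by definition built from $\DBG$-edges. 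It then follows that $\Xi$ sends this $p^\OS_J$ back to $w\widetilde{\eta}$.

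The main obstacle I anticipate is the \emph{bookkeeping around the index set and the order-compatibility}: one must check carefully that the blockwise grouping by $\sigma$-value in \eqref{2.16} is compatible with the $\prec'$-order on affine roots, so that distinct edges of $\widetilde{\eta}$ correspond to distinct indices in $J$ arranged in increasing order, with no collisions and no gaps in the correspondence. This hinges on the precise interplay between the lexicographic order on $\mathbb{Q}_{\geq 0} \times (\Delta^+ \setminus \Delta^+_S)$ (first component the $d$-value, second the reverse of $\prec$) and the grouping in \eqref{2.16}; Remark \ref{rem:2.15}(1) and Lemma \ref{remark2.11} are precisely the tools that make this rigorous, and verifying that they assemble correctly is where the care is required. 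The rest of the argument is a routine unwinding of reversible steps.
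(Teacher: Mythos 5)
Your proposal follows the paper's own proof essentially step for step: the paper likewise treats injectivity as immediate (for exactly the reason you give), and proves surjectivity by converting each pair $(\sigma_p,\beta_{p,m})$ into the affine root $\lon\beta_{p,m}^\lor + a_{p,m}\widetilde{\delta}$ — precisely your $\Phi^{-1}(\sigma_p,\beta_{p,m})$ — then checking via Lemma \ref{shortest} and condition (iii) of Definition \ref{defi:pQLS} that it lies in $\widetilde{\Delta}^+_{\aff} \cap m_\mu^{-1}\widetilde{\Delta}^-_{\aff}$, checking $\prec'$-monotonicity through $\Phi$ exactly as you describe, and finally verifying the round trip (the paper's Claim 3). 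The one point to tighten is that injectivity of $\Phi$ gives uniqueness of your $\gamma_{p,m}$ but not its existence: existence is where condition (i) of Definition \ref{defi:pQLS} enters, since it guarantees that the degree $a_{p,m}=(1-\sigma_p)\langle\lambda,\beta_{p,m}^\lor\rangle$ is a positive integer, which is why the paper writes the affine root down explicitly rather than inverting $\Phi$.
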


\begin{proof}
It is clear that the map  $\Xi:\B(w; m_\mu) \rightarrow w(\PQLS^\mu(\lambda) )$ is injective.
We will prove that the map  $\Xi:\B(w; m_\mu) \rightarrow w(\PQLS^\mu(\lambda) )$ is surjective.
Let us take an arbitrary element
	\begin{equation}
\widetilde{\eta} = 
 \left(
\left( 
v_{p, 0} \xleftarrow{\beta_{p,1}} \cdots \xleftarrow{\beta_{p,t_p}}v_{p, t_p}
\right)_{p=0, \ldots , s-1};
\tau_0, \ldots , \tau_s \right)
 \in w(\PQLS(\lambda)).
	\end{equation}
For $0 \leq p \leq s-1$, $1 \leq m \leq t_p$,
we set
$a_{p,m}\eqdef  (\tau_p -1)\langle \lambda_-, -\lon \beta_{p,m}^\lor \rangle $ 
and
$\widetilde{\beta}_{p,m} \eqdef
\lon \beta_{p,m}^\lor+
a_{p,m} \widetilde{\delta}$.

\begin{nclaim}
$\widetilde{\beta}_{p,m} \in \widetilde{\Delta}_\aff^+ \cap m_{\mu}^{-1}\widetilde{\Delta}_\aff^-$.
\end{nclaim}

\noindent
$Proof \ of \ Claim \ 1.$
If $p \neq 0$,
then 
$a_{p,m}=(\tau_p -1)\langle \lambda_-, -\lon \beta_{p,m}^\lor \rangle 
<\langle \lambda_-, \lon \beta_{p,m}^\lor \rangle $
since $\tau_p \neq 0$,
and hence 
$a_{p,m}<\phi(v(\mu)v(\lambda_-)^{-1}\lon \beta_{p,m} ) + \langle \lambda_-, \lon \beta_{p,m}^\lor \rangle $.
If $p=0$, 
then $\lons \beta_{p,m} \in \Delta^+ \cap v(\mu)^{-1} \Delta^-$
by the definition of $\PQLS^{\mu}(\lambda)$.
Thus,
$v(\mu)v(\lambda_-)^{-1} (\lon \beta_{p,m} ) = v(\mu)\lons \beta_{p,m} \in \Delta^-$,
and hence
$\phi(v(\mu)v(\lambda_-)^{-1} (\lon \beta_{p,m} ))=1$.
Therefore, in both cases, we deduce that 
$a_{p,m}=
(\tau_p -1)
\langle \lambda_-, -\lon \beta_{p,m}^\lor \rangle <\phi(v(\mu)v(\lambda_-)^{-1} (\lon \beta_{p,m} ))+\langle \lambda_-, \lon \beta_{p,m}^\lor \rangle $,
and hence that 
$\widetilde{\beta}_{p,m} \in \widetilde{\Delta}_\aff^+ \cap m_{\mu}^{-1}\widetilde{\Delta}_\aff^-$
by Lemma \ref{shortest}.
This proves the claim.
\bqed

\begin{nclaim}
We have
\begin{equation*}
\widetilde{\beta}_{0,1} \prec'  \cdots \prec' \widetilde{\beta}_{0, t_0} 
\prec'  
\widetilde{\beta}_{1,1} \prec' \cdots \prec' \widetilde{\beta}_{s-1,t_{s-1}},
\end{equation*}
where $\prec'$ denotes the total order on $\widetilde{\Delta}^+_{\aff} \cap m^{-1}_{\lambda_-} \widetilde{\Delta}^-_{\aff}$
introduced at the beginning of \S 3.2;
we choose 
$J' = \{ j_1 , \ldots , j'_{r'} \} \subset \{ K+1 , \ldots , L \}$ in such way that
\begin{align*}
\left( \beta^{\OS}_{j'_1} , \cdots , \beta^{\OS}_{j'_{r'}} \right)
=
\left(  
\widetilde{\beta}_{0,1} ,  \cdots , \widetilde{\beta}_{0,t_0} ,
\widetilde{\beta}_{1,1} , \cdots , \widetilde{\beta}_{s-1,t_{s-1}}
\right).
\end{align*}

\end{nclaim}

\noindent $Proof \ of \ Claim \ 2.$
It suffices to show the following:

(i)
for $0 \leq p \leq s-1$ and $1 \leq m < t_p$,
we have
$  \widetilde{\beta}_{p,m}\prec' \widetilde{\beta}_{p,m+1}$;

(ii)
for $0 \leq p \leq s-2$,
we have
$\widetilde{\beta}_{p,t_p} \prec' \widetilde{\beta}_{p+1,1}$.

(i)
Since
$\frac{\langle \lambda_- , \lon \beta_{p,m}^{\lor}\rangle - a_{p,m}}{\langle \lambda_- , \lon \beta_{p,m}^{\lor}\rangle}=\tau_p$ and
$\frac{\langle \lambda_- , \lon \beta_{p,m+1}^{\lor}\rangle - a_{p,m+1}}{\langle \lambda_- , \lon \beta_{p,m+1}^{\lor}\rangle}=\tau_p$,
it follows that 
\begin{align*}
\Phi (\widetilde{\beta}_{p,m}) &=
 (\tau_p, \beta_{p,m} ), \\
\Phi (\widetilde{\beta}_{p,m+1}) &=
 (\tau_p, \beta_{p,m+1} ).
\end{align*}
Therefore, the first component of $ \Phi (\widetilde{\beta}_{p,m})$ is equal to that of $\Phi (\widetilde{\beta}_{p,m+1})  $.
Also, since $ \beta_{p,m}  \succ \beta_{p,m+1} $,
we have $ \Phi (\widetilde{\beta}_{p,m}) < \Phi (\widetilde{\beta}_{p,m+1})  $.
This implies that $  \widetilde{\beta}_{p,m}\prec' \widetilde{\beta}_{p,m+1}$.

(ii)
The proof of (ii) is similar to that of (i).
The first components of $\Phi (\widetilde{\beta}_{p,t_p}) $ and $\Phi (\widetilde{\beta}_{p+1,1}) $ are $\tau_p$ and $\tau_{p+1}$, respectively.
Since $\tau_p < \tau_{p+1}$,
we have $\Phi (\widetilde{\beta}_{p,t_p}) < \Phi (\widetilde{\beta}_{p+1,1}) $.
This implies that
$\widetilde{\beta}_{p,t_p} \prec' \widetilde{\beta}_{p+1,1}$.
This proves the claim.
\bqed

Since $J' = \{ j_1 , \ldots , j'_{r'} \} \subset \{ K+1 , \ldots , L \}$,
we can define 
\begin{equation*}
p^{\OS}_{J'} 
= \left( wm_{\mu} = z^{\OS}_0 ,  z^{\OS}_{1} , \ldots , z^{\OS}_{r'}
; \beta^{\OS}_{j'_1} , \beta^{\OS}_{j'_2}, \ldots , \beta^{\OS}_{j'_{r'}} \right) 
\end{equation*}
by: 
$z^{\OS}_{0}=wm_{\mu}$, $z^{\OS}_{k}=z^\OS_{k-1} s_{\beta^{\OS}_{j'_{k}}}$ for $1\leq k \leq r'$;
it follows from Remark \ref{affinereflectionorder} and Claim 2 that
 $p^{\OS}_{J'} \in \B(w; m_{\mu})$.

\begin{nclaim}
$\Xi (p^\OS_{J'}) = \widetilde{\eta}$.
\end{nclaim}

\noindent $Proof \ of \ Claim \ 3.$
In the following description of $ p^\OS_{J'}$,
we employ the notation  $u_p$, $\sigma_p$, and $w_{p,m}$
used  in the definition of $\Xi (p^\OS_{J})$.

For $1 \leq k \leq r'$, 
we see that 
\begin{equation*}
d_{j'_k}=
1 + \frac{\dg(\beta^{\OS}_{j'_k})}{\langle \lambda_- , -\overline{ \beta^{\OS}_{j'_k} } \rangle}
=
1 + 
\frac{\dg(\widetilde{\beta}_{p,m})}{\langle \lambda_- , -\overline{\widetilde{\beta}_{p,m}} \rangle}
=
1 + \frac{a_{p,m}}{\langle \lambda_- , \beta_{p,m}^\lor \rangle}
=
d_{p,m}
.
\end{equation*}
Therefore,
the sequence (\ref{2.16}) determined by $p^{\OS}_{J'}$ is 
\begin{equation}\label{2.17}
		\underbrace{ 0 = d_{0,1} = \cdots d_{0,t_0} }_{=\tau_0}
		< \underbrace{d_{1,1} = \cdots =d_{1, t_1}}_{=\tau_1}
		 < \cdots <
		\underbrace{ d_{s-1,1} = \cdots =d_{s-1,t_{s-1}} }_{=\tau_{s-1}} <1 = \tau_{s} = \sigma_s.
	\end{equation}
Because the sequence (\ref{2.17}) of rational numbers is identical to the sequence (\ref{2.16}) for $p_{J'}^\OS$,
we deduce that 
$u_{p+1} - u_p = t_p$ for $0 \leq p \leq s-1$,
$\beta^\OS_{j'_{u_p + m}} = \widetilde{\beta}_{p,m}$ for $0 \leq p \leq s-1$, $1 \leq m \leq u_{p+1} - u_p$,
and
$\sigma_p = \tau_{p}$ for $0 \leq p \leq s$.
Hence it follows that 
$\lon \overline{ \beta^{\OS}_{j'_{u_p +m}} }^\lor = \beta_{p,m}$ and
 $w_{p, m} = v_{p,m}$.
Thus we conclude that $\Xi (p^\OS_{J'}) = \widetilde{\eta}$.
This proves the claim.
\bqed

Thus, we have proved that the map $\Xi:\B(w; m_\mu) \rightarrow w(\PQLS^\mu(\lambda))$ is surjective.
This completes the proof of Proposition \ref{bijective}.
\end{proof}

In order to prove Theorem \ref{theorem_graded_character}, 
we need the following lemmas.

\begin{lemm}
Let $p^{\OS}_{J} \in \B(w; m_{\mu})$, and set
\begin{equation*}
\Xi (p^{\OS}_{J})=
 \left(
\left( 
w_{k, 0} \xleftarrow{\beta_{k,1}} \cdots \xleftarrow{\beta_{k,t_k}}w_{k, t_k}
\right)_{k=0, \ldots , s-1};
\sigma_0, \ldots , \sigma_s \right)
.
\end{equation*}
Then, we have 
		$\ell(\dir (\ed(p^{\OS}_{J}))) - \ell(\dir(wm_{\mu})) = \ell(w v(\mu) \lons) - \ell(w_{s-1, t_{s-1}})$.
\end{lemm}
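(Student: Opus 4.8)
The plan is to rewrite both sides of the claimed equality entirely in terms of the two Weyl-group elements $x_0 = \dir(wm_\mu)$ and $x_r = \dir(\ed(p^{\OS}_J))$ sitting at the ends of the directed path (\ref{2.15.5}), and then to invoke the length-reversal property of the longest element $\lon$. Once the dictionary between the two models is read off, the statement collapses to a one-line computation, so there is no genuinely hard analytic content; the work is entirely in the bookkeeping.

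First I would extract the needed identifications directly from the construction of $\Xi$. From (\ref{2.15.5}) we have $\dir(wm_\mu) = x_0 = wv(\mu)v(\lambda_-)^{-1}$ and $\dir(\ed(p^{\OS}_J)) = \dir(z^{\OS}_r) = x_r$, so the left-hand side is simply $\ell(x_r) - \ell(x_0)$. On the other hand, recalling that $w_{p,m} = w'_{p,m}\lon = x_{u_p+m}\lon$ and that $u_s = r$, we get $w_{s-1,t_{s-1}} = x_{u_{s-1}+t_{s-1}}\lon = x_r\lon$; and by the computation already recorded in the excerpt, $w v(\mu)\lons = x_0\lon$ (using $v(\mu)v(\lambda_-)^{-1}\lon = v(\mu)\lons$ from Lemma \ref{vmu}(2)).

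The key step is then to apply the standard identity $\ell(y\lon) = \ell(\lon) - \ell(y)$, valid for every $y \in W$ because $\lon$ is the longest element. This yields $\ell(w_{s-1,t_{s-1}}) = \ell(x_r\lon) = \ell(\lon) - \ell(x_r)$ and $\ell(w v(\mu)\lons) = \ell(x_0\lon) = \ell(\lon) - \ell(x_0)$. Substituting, the right-hand side becomes $\bigl(\ell(\lon) - \ell(x_0)\bigr) - \bigl(\ell(\lon) - \ell(x_r)\bigr) = \ell(x_r) - \ell(x_0)$, which matches the left-hand side and finishes the proof.

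I expect the only delicate point to be the index bookkeeping, namely confirming that the terminal vertex $w_{s-1,t_{s-1}}$ of the last segment of $\Xi(p^{\OS}_J)$ is exactly $x_r\lon$. This rests on tracking that $u_s = r$, that $t_{s-1} = u_s - u_{s-1}$, and that the whole $\Xi$-construction twists by right multiplication by $\lon$, so that $w_{p,m} = x_{u_p+m}\lon$ uniformly. Once this matching of indices is verified, the equality is a purely formal consequence of the fact that multiplication by $\lon$ reverses lengths, and no further calculation is required.
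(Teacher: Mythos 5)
Your proof is correct and follows essentially the same route as the paper: both identify $\dir(wm_\mu)$ and $\dir(\ed(p^{\OS}_J))$ via Lemma \ref{vmu} and the construction of $\Xi$ (so that $w_{s-1,t_{s-1}} = \dir(\ed(p^{\OS}_J))\lon$ and $wv(\mu)\lons = \dir(wm_\mu)\lon$), and then conclude by the length-reversal identity $\ell(y\lon) = \ell(\lon) - \ell(y)$. The only cosmetic difference is that you make this last identity explicit, while the paper absorbs it into a single displayed computation.
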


\begin{proof}
Recall from Lemma \ref{vmu} (1), (2) that $\dir(m_\mu) = v(\mu)v(\lambda_-)^{-1}= v(\mu)\lons \lon$.
Also, by the definition of $\Xi$, we have $\dir(\ed(p^{\OS}_{J})))=w_{s-1, t_{s-1}}\lon$.
Therefore,
we see that 
\begin{align*}
\ell(\dir (\ed(p^{\OS}_{J}))) - \ell(\dir(wm_{\mu})) &=\ell (w_{s-1, t_{s-1}}\lon)  -
\ell(wv(\mu)\lons \lon)  \\ &=
 \ell(w v(\mu) \lons) - \ell(w_{s-1, t_{s-1}}).
\end{align*}
This proves the lemma.
\end{proof}

\begin{lemm}\label{qls_qb}
Let $p^{\OS}_{J} \in \B(w; m_{\mu})$.
Then, the following hold: 

\begin{enu}
\item
		$\wt (\ed (p^{\OS}_{J})) = \wt (\pr (\Xi(p^\OS_{J}) ))${\rm;}

\item
$
 \mathcal{R}(\Xi(p^{\OS}_{J}))) =  \mathcal{L}(p^{\OS}_{J}).
$   \end{enu}
\end{lemm}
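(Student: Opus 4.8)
The plan is to verify the two identities in Lemma~\ref{qls_qb} by carefully tracking how the bijection $\Xi$ transports the data of an alcove path $p^\OS_J$ to the pseudo-quantum LS pair $\Xi(p^\OS_J)$, and then comparing the two statistics term by term. The essential dictionary was already established in the construction of $\Xi$: under the correspondence, the selected affine roots $\beta^\OS_{j}$ with $j\in J$ map to the finite roots $\beta_{p,m}=\lon\overline{\beta^\OS_{j}}^\lor$ via Remark~\ref{rem:2.15}(2), the rational parameters are matched through $d_{j}=\sigma_p$ where $\sigma_p$ is the first component of $\Phi(\beta^\OS_{j})$, and the direction elements satisfy $\dir(z^\OS_k)=w_{p,m}\lon$ after multiplication by $\lon$ on the right.

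For part (1), I would compute $\wt(\ed(p^\OS_J))$ directly. Writing $\ed(p^\OS_J)=z^\OS_r=wm_\mu s_{\beta^\OS_{j_1}}\cdots s_{\beta^\OS_{j_r}}$, its translation part $\wt(\ed(p^\OS_J))\in P$ can be computed using the action of $\widetilde{W}_\ext$ on $\mathfrak{h}\oplus\mathbb{C}\widetilde\delta$ recorded in \S3.1. The key observation is that the weight accumulates contributions precisely when the direction $\dir(z^\OS_k)$ changes, and these increments are controlled by the degrees $\dg(\beta^\OS_{j})=a_j$ together with the finite roots $\overline{\beta^\OS_{j}}^\lor$. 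On the other side, $\wt(\pr(\Xi(p^\OS_J)))=\sum_{i=0}^{s-1}(\sigma_{i+1}-\sigma_i)\lfloor w_{i,t_i}\rfloor\lambda$ by the definition of $\wt$ on $\BPQLS(\lambda)$. The plan is to show these two expressions agree by using the relations $\sigma_p=d_{j}=1+\dg(\beta^\OS_j)/\langle\lambda_-,-\overline{\beta^\OS_j}\rangle$ and $w_{p,m}\lambda=w'_{p,m}\lon\lambda=w'_{p,m}\lambda_-$, converting the telescoping sum over the steps of the alcove path into the sum over the intervals $(\sigma_i,\sigma_{i+1})$.

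For part (2), I would match $\mathcal{R}$ and $\mathcal{L}$ factor by factor. Both statistics are products indexed by the selected roots, so it suffices to show that a single Bruhat (resp.\ quantum) factor of $\mathcal{L}(p^\OS_J)$ coincides with the corresponding factor $\dg_{\sigma_p\lambda}(w_{p,m-1}\xleftarrow{\beta_{p,m}}w_{p,m})$ of $\mathcal{R}(\Xi(p^\OS_J))$. Here I would use that by Lemma~\ref{involution}(2) the edge $x_{k-1}\xrightarrow{-\overline{\beta^\OS_{j}}^\lor}x_k$ in $\DBG$ is Bruhat (resp.\ quantum) if and only if $w_{p,m-1}\xleftarrow{\beta_{p,m}}w_{p,m}$ is, so the case distinction is preserved. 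The remaining task is to translate the exponents: the affine quantities $\dg(\beta^\OS_j)$ and $\langle\rho,-\overline{\beta^\OS_j}\rangle$ appearing in $\mathcal{L}$ must be rewritten, via $\beta_{p,m}=\lon\overline{\beta^\OS_j}^\lor$ and $\dg(\beta^\OS_j)=(\sigma_p-1)\langle\lambda_-,\overline{\beta^\OS_j}\rangle=(1-\sigma_p)\langle\lambda,\beta_{p,m}^\lor\rangle$, into the $q^{(1-b)\langle\lambda,\beta^\lor\rangle}t^{\langle\rho,\beta^\lor\rangle}$ form with $b=\sigma_p$ that appears in the definition of $\dg_{b\lambda}$. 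I would also need to account for the overall prefactor $t^{-\frac12\#J}(1-t)^{\#J}$ in $\mathcal{L}$, distributing one factor of $t^{-\frac12}(1-t)$ to each edge to match the numerators of the two branches of $\dg_{b\lambda}$.

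I expect the main obstacle to be the bookkeeping in part (2): correctly converting $\langle\rho,-\overline{\beta^\OS_j}\rangle$ into $\langle\rho,\beta_{p,m}^\lor\rangle$ (using $\lon$-invariance of the pairing with $\rho$ up to a sign, namely $\rho=-\lon\rho$ after the identification $s_\alpha=s_{\alpha^\lor}$ and the passage between $\mathfrak g$ and $\widetilde{\mathfrak g}$), and verifying that the quantum-edge numerator $q^{\dg(\beta^\OS_j)}t^{\langle\rho,-\overline{\beta^\OS_j}\rangle}$ in $\mathcal{L}$ produces exactly the factor $q^{(1-\sigma_p)\langle\lambda,\beta_{p,m}^\lor\rangle}t^{-\frac12+\langle\rho,\beta_{p,m}^\lor\rangle}(1-t)$ demanded by the quantum branch of $\dg_{\sigma_p\lambda}$. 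Keeping the signs and the short-root/coroot conventions straight between the two root systems $\Delta$ and $\widetilde\Delta$ is where the care is needed; once the exponent identifications are pinned down, the equality of the two products is immediate.
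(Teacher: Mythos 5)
Your proposal is correct and is essentially the paper's own argument: the paper carries out the same factor-by-factor comparison, merely organized as an induction on $\#J$ (peeling off the largest index $j_r$ and showing that $\wt(\pr(\Xi(\cdot)))$ and $\wt(\ed(\cdot))$ change by the identical increment $-a_{j_r}w_{s-1,t_{s-1}}\lon\overline{\beta^\OS_{j_r}}^\lor$, while $\mathcal{R}$ and $\mathcal{L}$ pick up the identical factor), using precisely your identifications $\beta_{p,m}=\lon\overline{\beta^\OS_{j}}^\lor$, $\dg(\beta^\OS_{j})=(1-\sigma_p)\langle\lambda,\beta_{p,m}^\lor\rangle$, $\langle\rho,-\overline{\beta^\OS_{j}}\rangle=\langle\rho,\beta_{p,m}^\lor\rangle$ via $\lon\rho=-\rho$, the factorization $s_{\beta^\OS_{j}}=t\bigl(-a_{j}\overline{\beta^\OS_{j}}^\lor\bigr)s_{\overline{\beta^\OS_{j}}}$ for the translation parts, and Lemma~\ref{involution}(2) to preserve the Bruhat/quantum dichotomy. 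The only blemish is a sign slip: your intermediate expression $(\sigma_p-1)\langle\lambda_-,\overline{\beta^\OS_j}\rangle$ should read $(1-\sigma_p)\langle\lambda_-,\overline{\beta^\OS_j}\rangle$, which then agrees with your (correct) final form $(1-\sigma_p)\langle\lambda,\beta_{p,m}^\lor\rangle=\dg(\beta^\OS_j)$.
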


\setcounter{nclaim}{0} 

\begin{proof}
We proceed by induction on $ \#J$.
If $J=\emptyset$, then it is obvious that $\mathcal{L} (p^{\OS}_{J})=\mathcal{R}(\Xi(p^{\OS}_{J}))=1$ and 
${\wt}(\ed (p^{\OS}_{J})) = \wt(\pr (\Xi(p^{\OS}_{J}) ))=w\mu$,
since
$\Xi(p^{\OS}_{J})=(wv(\mu)\lons; 0,1)$.
	
Let	$J=\{ j_1 <j_2 <\cdots <j_r\}$ with $r \geq 1$, and set
	$K \eqdef J\setminus \{ j_r \}$. 
	Assume that $\Xi (p^{\OS}_{K})$ is of the form: 
\begin{equation*}
\Xi(p^{\OS}_K)=
\widetilde{\eta}=
 \left(
\left( 
w_{p, 0} \xleftarrow{\beta_{p,1}} \cdots \xleftarrow{\beta_{p,t_p}}w_{p, t_p}
\right)_{p=0, \ldots , s-1};
\sigma_0, \ldots , \sigma_s \right)
;
\end{equation*}
note that
$\dir(\ed(p^\OS_K))=w_{s-1, t_{s-1}} \lon$
 and 
$w_{0,0}=v(\mu) \lons$
by the definition of $\Xi$.

If $d_{j_{r}}=d_{j_{r-1}}={\sigma}_{s-1}$, 
then $\{ d_{j_1} \leq \cdots \leq d_{j_{r-1}}\leq d_{j_r} \}
=
\{ d_{j_1} \leq \cdots \leq d_{j_{r-1}} \} $,
and hence
\begin{equation*}
\Xi(p^{\OS}_J)=
 \left(
\left( 
w'_{p, 0} \xleftarrow{\beta_{p,1}} \cdots \xleftarrow{\beta_{p,t'_p}}w'_{p, t'_p}
\right)_{p=0, \ldots , s-1}
;
\sigma_0, \ldots , \sigma_s \right)
,
\end{equation*}
where 
\begin{align*}
t'_p &=t_p &\text{ for }& 0 \leq p \leq s-2,\\
t'_{s-1} &= t_{s-1}+1, \\
w'_{p, m}&= w_{p,m} &\text{ for }& 0 \leq p \leq s-1, 1 \leq m \leq t_{p-1},\\
w'_{s-1,t'_{s-1}} &= w_{s-1,t_{s-1}}s_{\lon \overline {\beta^{\OS}_{j_r} }  },\\
\beta_{s-1,t'_{s-1}}&= \lon \overline {\beta^{\OS}_{j_r} }^\lor.
\end{align*}

If $d_{j_{r}}>d_{j_{r-1}}={\sigma}_{s-1}$,
then
$\{ d_{j_1} \leq \cdots \leq d_{j_{r-1}}\leq d_{j_r} \}
= 
\{ d_{j_1} \leq \cdots \leq d_{j_{r-1}}< d_{j_r} \}
$, and hence
\begin{equation*}
\Xi(p^{\OS}_J)=
 \left(
\left( 
w'_{p, 0} \xleftarrow{\beta_{p,1}} \cdots \xleftarrow{\beta_{p,t'_p}}w'_{p, t'_p}
\right)_{p=0, \ldots , s}
;
\sigma_0, \ldots , \sigma_{s-1}, d_{j_r}, \sigma_s \right)
,
\end{equation*}
where 
\begin{align*}
t'_p &=t_p &\text{ for }& 0 \leq p \leq s-1,\\
t'_{s} &= 1, \\
w'_{p, m}&= w_{p,m} &\text{ for }& 0 \leq p \leq s-1, 1 \leq m \leq t_{p-1},\\
w'_{s, 0}&= w_{s-1,t_{s-1}},\\
w'_{s,t'_{s}} &= w_{s-1,t_{s-1}}s_{\lon \overline {\beta^{\OS}_{j_r} }  },\\
\beta_{s,t'_{s}}&
=\beta_{s,1}= \lon (\overline {\beta^{\OS}_{j_r} } )^\lor.
\end{align*}

For the induction step, it suffices to show the following claims.

\begin{nclaim}
(1) We have
	\begin{equation*}
		\wt ( \pr(\Xi (p^{\OS}_{J}) )) 
		= \wt (\pr( \Xi (p^{\OS}_{K}) ))-
		a_{j_{r}}w_{s-1, t_{s-1}}\lon \overline {\beta^{\OS}_{j_r } }^\lor.
	\end{equation*}

(2) We have
\begin{equation*}
		\mathcal{R}(\Xi (p^{\OS}_{J}) )=
\left\{
			\begin{array}{l}
				\mathcal{R}(\Xi (p^{\OS}_{K}) )\frac{
				t^{-\half}(1-t)
				}{1- q^{\dg(\beta_{j_r}^\OS)}t^{\langle \rho, - \overline{ \beta^\OS_{j_r} }\rangle}}    \\ \hspace{1.5cm} \mbox{if} \ 
\dir(\ed(p^{\OS}_{K})) \xrightarrow{ - \overline{ \beta^\OS_{j_r} }^\lor} \dir(\ed(p^{\OS}_{J}))\mbox{ is a Bruhat edge,} \\[3mm]
				\mathcal{R}(\Xi (p^{\OS}_{K}) )\frac{q^{\dg(\beta_{j_r}^\OS)}t^{\langle \rho, - \overline{ \beta^\OS_{j_r} }\rangle-\half}(1-t)}{1- q^{\dg(\beta_{j_r}^\OS)}t^{\langle \rho, - \overline{ \beta^\OS_{j_r} }\rangle}}   \\ \hspace{1.5cm} \mbox{if} \ 
\dir(\ed(p^{\OS}_{K})) \xrightarrow{ - \overline{ \beta^\OS_{j_r} }^\lor} \dir(\ed(p^{\OS}_{J}))\mbox{ is a quantum edge;} \\
			\end{array}
\right.
	\end{equation*}
note that $\dir(\ed(p^{\OS}_{J}))=\dir(\ed(p^{\OS}_{K}))s_{\overline{ \beta^\OS_{j_r} }}$.
\end{nclaim}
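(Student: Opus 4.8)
The plan is to prove both parts at once by exploiting the fact, recorded in the two displayed case descriptions immediately preceding the claim, that passing from $K$ to $J=K\cup\{j_r\}$ alters the pseudo-quantum LS path $\Xi(p^{\OS}_K)$ only at its tail: in the case $d_{j_r}=d_{j_{r-1}}=\sigma_{s-1}$ the last block is lengthened by one edge, while in the case $d_{j_r}>\sigma_{s-1}$ a new final block (with a single edge) is created. Throughout I would set $\gamma=\lon\overline{\beta^{\OS}_{j_r}}^\lor\in\Delta^+\setminus\Delta^+_S$ and record three elementary identities: first, $\gamma^\lor=\lon\overline{\beta^{\OS}_{j_r}}$, whence $\langle\lambda,\gamma^\lor\rangle=\langle\lambda_-,\overline{\beta^{\OS}_{j_r}}\rangle$ using $\lon^{-1}=\lon$; second, $\langle\rho,\gamma^\lor\rangle=\langle\rho,-\overline{\beta^{\OS}_{j_r}}\rangle$, which follows from $\lon\rho=-\rho$; and third, the key simplification $(1-d_{j_r})\langle\lambda,\gamma^\lor\rangle=a_{j_r}=\dg(\beta^{\OS}_{j_r})$, which is immediate from the definition of $d_{j_r}$ in Remark~\ref{rem:2.15}.

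For part (1) I would first rewrite $\wt(\pr(\widetilde\eta))=\sum_{p=0}^{s-1}(\sigma_{p+1}-\sigma_p)\,w_{p,t_p}\lambda$, using that $\lfloor w\rfloor\lambda=w\lambda$ since $W_S$ fixes $\lambda$. Then I would compute the difference of this expression for $\Xi(p^{\OS}_J)$ and $\Xi(p^{\OS}_K)$ in each case. All blocks before the last are identical, so the difference telescopes to a single tail contribution; applying $s_\gamma\lambda=\lambda-\langle\lambda,\gamma^\lor\rangle\gamma$ yields $\wt(\pr(\Xi(p^{\OS}_J)))-\wt(\pr(\Xi(p^{\OS}_K)))=-(1-d_{j_r})\langle\lambda,\gamma^\lor\rangle\,w_{s-1,t_{s-1}}\gamma$ in both cases. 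The third identity above then turns this into $-a_{j_r}\,w_{s-1,t_{s-1}}\gamma$, which is the asserted formula. The only care required is to check that the insertion of the coefficient $d_{j_r}$ in the second case (replacing $(\sigma_{s-1},1)$ by $(\sigma_{s-1},d_{j_r},1)$) produces exactly the same telescoped term as the lengthening in the first case.

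For part (2) I would observe that $\mathcal{R}$ gains precisely one new factor, namely $\deg_{d_{j_r}\lambda}$ applied to the new edge $w_{s-1,t_{s-1}}\xleftarrow{\gamma}w_{s-1,t_{s-1}}s_\gamma$, the $b$-parameter being $d_{j_r}$ in both cases since that is the $\sigma$-value of the block containing the new edge. By Lemma~\ref{involution}(2) this edge is Bruhat (resp.\ quantum) exactly when the $\DBG$ edge $\dir(\ed(p^{\OS}_K))\xrightarrow{-\overline{\beta^{\OS}_{j_r}}^\lor}\dir(\ed(p^{\OS}_J))$ is, as the two differ by right multiplication by $\lon$; this matches the dichotomy in the claim. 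It then remains to substitute into the two branches of the definition of $\deg_{b\lambda}$: the identities $(1-d_{j_r})\langle\lambda,\gamma^\lor\rangle=\dg(\beta^{\OS}_{j_r})$ and $\langle\rho,\gamma^\lor\rangle=\langle\rho,-\overline{\beta^{\OS}_{j_r}}\rangle$ convert the Bruhat and quantum expressions precisely into the two factors displayed in part (2).

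I expect the main difficulty to be organizational rather than conceptual: namely, keeping the two cases aligned in the weight telescoping of part (1), and transporting the Bruhat/quantum dichotomy for the new $\deg$ factor correctly across the right multiplication by $\lon$ via Lemma~\ref{involution}(2) so that it matches the edge $\dir(\ed(p^{\OS}_K))\to\dir(\ed(p^{\OS}_J))$ named in the statement. Once the simplification $(1-d_{j_r})\langle\lambda,\gamma^\lor\rangle=\dg(\beta^{\OS}_{j_r})$ and the $\rho$-pairing identity are in place, both parts reduce to direct substitution into the defining formulas.
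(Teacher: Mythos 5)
Your proposal is correct and takes essentially the same route as the paper's own proof: the same case-by-case telescoping of the weight sum in part (1), collapsed by the identity $(1-d_{j_r})\langle \lambda, \lon \overline{\beta^{\OS}_{j_r}}\rangle = a_{j_r} = \dg(\beta^{\OS}_{j_r})$, and in part (2) the same observation that $\mathcal{R}$ gains exactly one factor $\dg_{d_{j_r}\lambda}$ on the new tail edge, with the Bruhat/quantum dichotomy carried across right multiplication by $\lon$ via Lemma~\ref{involution}(2) and the exponents matched using $\lon\rho=-\rho$. There are no gaps.
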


\begin{nclaim}

(1) We have
	\begin{equation*}
		\wt ( \ed (p^{\OS}_{J}) ) 
		= \wt ( \ed (p^{\OS}_{K}) )-
		a_{j_{r}}w_{s-1, t_{s-1}}\lon \overline{\beta^{\OS}_{j_r } }^\lor.
	\end{equation*}

(2) We have
\begin{equation*}
		\mathcal{L}(\Xi (p^{\OS}_{J}) )=
\left\{
			\begin{array}{l}
				\mathcal{L}(\Xi (p^{\OS}_{K}) )\frac{t^{-\half}(1-t)}{1- q^{\dg(\beta_{j_r}^\OS)}t^{\langle \rho, - \overline{ \beta^\OS_{j_r} }\rangle}}    \\ \hspace{1.5cm} \mbox{if} \ 
\dir(\ed(p^{\OS}_{K})) \xrightarrow{ - \overline{ \beta^\OS_{j_r} }^\lor} \dir(\ed(p^{\OS}_{J}))\mbox{ is a Bruhat edge,} \\[3mm]
				\mathcal{L}(\Xi (p^{\OS}_{K}) )\frac{q^{\dg(\beta_{j_r}^\OS)}t^{\langle \rho, - \overline{ \beta^\OS_{j_r} }\rangle-\half}(1-t)}{1- q^{\dg(\beta_{j_r}^\OS)}t^{\langle \rho, - \overline{ \beta^\OS_{j_r} }\rangle}}   \\ \hspace{1.5cm} \mbox{if} \ 
\dir(\ed(p^{\OS}_{K})) \xrightarrow{ - \overline{ \beta^\OS_{j_r} }^\lor} \dir(\ed(p^{\OS}_{J}))\mbox{ is a quantum edge.} \\
			\end{array}
\right.
	\end{equation*}
\end{nclaim}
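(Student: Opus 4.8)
The statement to prove, Claim~2, is the exact alcove-walk analogue of Claim~1: it records how $\wt(\ed(\cdot))$ and $\mathcal{L}(\cdot)$ change when one enlarges the subset from $K = J \setminus \{j_r\}$ to $J$. Together with the induction hypothesis of the lemma, namely that $\wt(\ed(p^\OS_K)) = \wt(\pr(\Xi(p^\OS_K)))$ and $\mathcal{L}(p^\OS_K) = \mathcal{R}(\Xi(p^\OS_K))$, Claims~1 and~2 will show that the two quantities are altered by the same additive term (resp.\ the same multiplicative factor), thereby closing the induction. The whole argument rests on the single observation that, since $j_r$ is the largest element of $J$, the alcove path for $J$ is the one for $K$ extended by one step:
\[
\ed(p^\OS_J) = z^\OS_r = z^\OS_{r-1}\,s_{\beta^\OS_{j_r}} = \ed(p^\OS_K)\,s_{\beta^\OS_{j_r}}.
\]

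For part~(1), the plan is to compute the weight of this product inside $\widetilde{W}_\ext = t(P) \rtimes W$. Writing $\beta^\OS_{j_r} = \overline{\beta^\OS_{j_r}} + \dg(\beta^\OS_{j_r})\widetilde{\delta}$ and $a_{j_r} = \dg(\beta^\OS_{j_r})$, I would first decompose the affine reflection as
\[
s_{\beta^\OS_{j_r}} = t\bigl(-a_{j_r}\,\overline{\beta^\OS_{j_r}}^\lor\bigr)\,s_{\overline{\beta^\OS_{j_r}}},
\]
which one checks by evaluating both sides on $\mathfrak{h}^*$ against the displayed affine action of $t(P)\rtimes W$, with the sign and normalization pinned down by the reference case $s_0 = t(\varphi)s_\varphi$. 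Setting $\nu = \wt(\ed(p^\OS_K))$ and $x = \dir(\ed(p^\OS_K))$, so that $\ed(p^\OS_K) = t(\nu)x$, the semidirect-product rule $t(\nu)x\cdot t(z)y = t(\nu + xz)(xy)$ yields
\[
\ed(p^\OS_J) = t\bigl(\nu - a_{j_r}\,x\,\overline{\beta^\OS_{j_r}}^\lor\bigr)\bigl(x\,s_{\overline{\beta^\OS_{j_r}}}\bigr),
\]
whence $\wt(\ed(p^\OS_J)) = \wt(\ed(p^\OS_K)) - a_{j_r}\,x\,\overline{\beta^\OS_{j_r}}^\lor$. Substituting $x = \dir(\ed(p^\OS_K)) = w_{s-1,t_{s-1}}\lon$, which was recorded in the construction of $\Xi$, gives precisely the claimed identity.

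For part~(2), I would argue straight from the definition of $\mathcal{L}$. Because the alcove path underlying $p^\OS_K$ is literally the initial segment of the one underlying $p^\OS_J$, the common steps contribute identically, and $j_r$ is the only new index. By the definition of $J^+$ and $J^-$, the index $j_r$ lies in $J^+$ (resp.\ $J^-$) exactly when the new edge $\dir(\ed(p^\OS_K)) \xrightarrow{-\overline{\beta^\OS_{j_r}}^\lor} \dir(\ed(p^\OS_J))$ of $\DBG$ is Bruhat (resp.\ quantum). Hence $\mathcal{L}(p^\OS_J)$ differs from $\mathcal{L}(p^\OS_K)$ by the prefactor $t^{-\half}(1-t)$ coming from $\#J = \#K + 1$, times the single extra factor attached to $j_r$; reading off the two cases and multiplying gives exactly the two displayed expressions.

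The only step that is not pure bookkeeping is the affine-reflection computation in part~(1). The point to be careful about is the sign and normalization of the translation part and the order of multiplication in $t(P)\rtimes W$, so that $x$ acts on $\overline{\beta^\OS_{j_r}}^\lor$ from the left and produces exactly $w_{s-1,t_{s-1}}\lon\,\overline{\beta^\OS_{j_r}}^\lor$; once this is set up correctly, both parts follow directly.
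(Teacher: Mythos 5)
Your proposal is correct and follows essentially the same route as the paper: part (1) via the same factorization $s_{\beta^{\OS}_{j_r}} = t\bigl(-a_{j_r}\overline{\beta^{\OS}_{j_r}}^\lor\bigr)s_{\overline{\beta^{\OS}_{j_r}}}$ and the semidirect-product computation in $t(P)\rtimes W$ with $\dir(\ed(p^{\OS}_{K})) = w_{s-1,t_{s-1}}\lon$, and part (2) via the same observation that $j_r$ joins $J^+$ or $J^-$ according to whether the new edge is Bruhat or quantum, while all other factors of $\mathcal{L}$ are unchanged.
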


\noindent $Proof \ of \ Claim \ 1.$
(1)
If $d_{j_{r}}=d_{j_{r-1}}={\sigma}_{s-1}$, then we compute:
\begin{align*}
		&\wt ( \pr(\Xi (p^{\OS}_{J})) ) \\
		&=\sum_{p=1}^{s} ({\sigma}_{p} - {\sigma}_{p-1} ) w'_{p-1, t'_{p-1}} \lambda \\
		&=\sum_{p=1}^{s-1} ({\sigma}_{p} - {\sigma}_{p-1} ) w_{p-1, t_{p-1}}\lambda + (\sigma_s -\sigma_{s-1})
			 w_{s-1, t_{s-1}}s_{\lon \overline {\beta^{\OS}_{j_r } }  } \lambda
			 \\
		&= \sum_{p=1}^{s} ({\sigma}_{p} - {\sigma}_{p-1} ) w_{p-1, t_{p-1}} \lambda 
		+ (\sigma_s- \sigma_{s-1}) w_{s-1, t_{s-1}}s_{\lon \overline {\beta^{\OS}_{j_r } }  }\lambda - (\sigma_s- \sigma_{s-1}) w_{s-1, t_{s-1}}\lambda \\
		&=
		 \sum_{p=1}^{s} ({\sigma}_{p} - {\sigma}_{p-1} ) w_{p-1, t_{p-1}}\lambda 
		+ (1-d_{j_{r}}) w_{s-1, t_{s-1}}s_{\lon \overline {\beta^{\OS}_{j_r } }  }\lambda - (1-d_{j_{r}}) w_{s-1, t_{s-1}}\lambda \\
		&\hphantom{=\sum_{p=1}^{s} ({\sigma}_{p} - {\sigma}_{p-1} ) w_{p-1, t_{p-1}}\lambda + (1-d_{j_{r}}) w_{s-1, t_{s-1}}s_{\lon \overline {\beta^{\OS}_{j_r } }  }\lambda }(\mathrm{since}\ \sigma_s=1,\ d_{j_r}=\sigma_{s-1})
	\end{align*}
If $d_{j_{r}}>d_{j_{r-1}}={\sigma}_{s-1}$, then we compute:
\begin{align*}
		&\wt ( \pr(\Xi (p^{\OS}_{J}) )) \\
		&=\sum_{p=1}^{s-1} ({\sigma}_{p} - {\sigma}_{p-1} )  w_{p-1, t_{p-1}} \lambda 
	+(d_{j_r}-\sigma_{s-1})w_{s-1, t_{s-1}} \lambda
	+(\sigma_s - d_{j_r})
			 w_{s-1, t_{s-1}}s_{\lon \overline {\beta^{\OS}_{j_r } }  } \lambda
\\
		&= \sum_{p=1}^{s} ({\sigma}_{p} - {\sigma}_{p-1} ) w_{p-1,t_{p-1}}\lambda + (d_{j_r}-\sigma_{s-1})w_{s-1, t_{s-1}} \lambda 
				\\	
		&\hphantom{ \sum_{p=1}^{s} ({\sigma}_{p} - {\sigma}_{p-1} ) w_{p-1,t_{p-1}}\lambda +}
			+
			(\sigma_s - d_{j_r})
			w_{s-1, t_{s-1}}s_{\lon \overline {\beta^{\OS}_{j_r } }  }\lambda 
			 -({\sigma}_{s} - {\sigma}_{s-1} ) w_{s-1, t_{s-1}}\lambda
\\
&= \sum_{p=1}^{s} ({\sigma}_{p} - {\sigma}_{p-1} ) w_{p-1, t_{p-1}} \lambda 
		+ (\sigma_s- d_{j_r}) w_{s-1, t_{s-1}} s_{\lon \overline {\beta^{\OS}_{j_r } }  }\lambda - (\sigma_s-d_{j_r}) w_{s-1, t_{s-1}}\lambda \\
		&\overset{\sigma_s=1}{=}
		 \sum_{p=1}^{s} ({\sigma}_{p} - {\sigma}_{p-1} ) w_{p-1, t_{p-1}}\lambda 
		+ (1-d_{j_{r}}) w_{s-1, t_{s-1}}s_{\lon \overline {\beta^{\OS}_{j_r } }  }\lambda - (1-d_{j_{r}}) w_{s-1, t_{s-1}}\lambda .
	\end{align*}
In both cases above, since
$\wt(\pr (\Xi(p^{\OS}_K))) =\sum_{p=1}^{s} ({\sigma}_{p} - {\sigma}_{p-1} )  w_{p-1, t_{p-1}} \lambda  $,
and since
\begin{align*}
& (1-d_{j_{r}}) w_{s-1, t_{s-1}}s_{\lon \overline {\beta^{\OS}_{j_r } }  }\lambda - (1-d_{j_{r}}) w_{s-1, t_{s-1}}\lambda \\
 &=-(1-d_{j_{r}}) w_{s-1, t_{s-1}} {\langle \lambda,  \lon \overline {\beta^{\OS}_{j_r } }   \rangle} \lon \overline { \beta^{\OS}_{j_r } }^\lor  \\
&=
- \frac{a_{j_r}}{\langle \lambda_-, \overline{ \beta^{\OS}_{j_r} }     \rangle}
{\langle \lambda_-, \overline{ \beta^{\OS}_{j_r} }    \rangle}
 w_{s-1, t_{s-1}} \lon \overline { \beta^{\OS}_{j_r } }^\lor \quad
({\rm by \ Remark} \ \ref{2.15}) 
 \\
&=-
 a_{j_{r}}w_{s-1, t_{s-1}}\lon \overline { \beta^{\OS}_{j_r } }^\lor,
\end{align*}
it follows that
	\begin{align*}
		&\wt ( \pr(\Xi (p^\OS_{J}))) \\
		&= \sum_{p=1}^{s} ({\sigma}_{p} - {\sigma}_{p-1} ) w_{p-1, t_{p-1}}\lambda 
		+ (1-d_{j_{r}}) w_{s-1, t_{s-1}}s_{\lon \overline {\beta^{\OS}_{j_r } }  }\lambda - (1-d_{j_{r}}) w_{s-1, t_{s-1}}\lambda \\
		&= \wt ( \pr(\Xi (p^\OS_{K})))- a_{j_{r}}w_{s-1, t_{s-1}}\lon\overline {\beta^{\OS}_{j_r } }^\lor.
	\end{align*}

(2) We have 

\begin{align*}
\mathcal{R}(\Xi (p^{\OS}_{J}) ) 
&=\left(\prod_{p=0}^{s-1} \prod_{m=1}^{t_p}
\wt_{\sigma_p \lambda}(w_{p, m-1} \xleftarrow{\beta_{p,m}} w_{p, m}) \right)
\\
&\hspace{3cm} \times \deg_{d_{j_r}\lambda}\left(w_{s-1, t_{s-1}}  \xleftarrow{ \lon\overline{ \beta^\OS_{j_r} }^\lor} w_{s-1, t_{s-1}}s_{\lon \overline {\beta^{\OS}_{j_r } }  }\right)
\\
&=
\mathcal{R}(\Xi (p^{\OS}_{K}) )
\dg_{d_{j_r}\lambda}\left(w_{s-1, t_{s-1}}  \xleftarrow{ \lon \overline{ \beta^\OS_{j_r} }^\lor} w_{s-1, t_{s-1}}s_{\lon \overline {\beta^{\OS}_{j_r } }  }\right).
\end{align*}
If $\dir(\ed(p^{\OS}_{K})) \xrightarrow{ - \overline{ \beta^\OS_{j_r} }^\lor} \dir(\ed(p^{\OS}_{J}))$ is a Bruhat (resp., quantum) edge,
then $w_{s-1, t_{s-1}}  \xleftarrow{ \lon \overline{ \beta^\OS_{j_r} }^\lor} w_{s-1, t_{s-1}}s_{\lon \overline {\beta^{\OS}_{j_r } }  }$
is also a Bruhat (resp., quantum) edge by Lemma \ref{involution} (2),
since $\dir(\ed(p^{\OS}_{K}))=w_{s-1, t_{s-1}}\lon$ and 
\begin{equation*}
\dir(\ed(p^{\OS}_{J}))=\dir(\ed(p^{\OS}_{J})s_{ {\beta^{\OS}_{j_r } }  })=w_{s-1, t_{s-1}}\lon s_{\overline {\beta^{\OS}_{j_r } }  }.
\end{equation*}
Hence we see that 
\begin{equation*}
		\mathcal{R}(\Xi (p^{\OS}_{J}) )=
\left\{
			\begin{array}{l}
				\mathcal{R}(\Xi (p^{\OS}_{K}) )\frac{t^{-\half}(1-t)}{1- q^{(1-d_{j_r})\langle \lambda, \lon \overline{ \beta^\OS_{j_r} }\rangle}t^{\langle \rho, \lon \overline{ \beta^\OS_{j_r} }\rangle}}   \\ \hspace{1.5cm}     \mbox{if} \ 
\dir(\ed(p^{\OS}_{K})) \xrightarrow{ - \overline{ \beta^\OS_{j_r} }^\lor} \dir(\ed(p^{\OS}_{J}))\mbox{ is a Bruhat edge,} \\[3mm]
				\mathcal{R}(\Xi (p^{\OS}_{K}) )\frac{q^{(1-d_{j_r})\langle \lambda, \lon \overline{ \beta^\OS_{j_r} }\rangle}t^{\langle \rho, \lon \overline{ \beta^\OS_{j_r} }\rangle-\half}(1-t)}{1- q^{(1-d_{j_r})\langle \lambda, \lon \overline{ \beta^\OS_{j_r} }\rangle}t^{\langle \rho, \lon \overline{ \beta^\OS_{j_r} }\rangle}}   \\ \hspace{1.5cm}     \mbox{if} \ 
\dir(\ed(p^{\OS}_{K})) \xrightarrow{ - \overline{ \beta^\OS_{j_r} }^\lor} \dir(\ed(p^{\OS}_{J}))\mbox{ is a quantum edge.} \\
			\end{array}
\right.
	\end{equation*}
Here, 
${\langle \rho, - \overline{ \beta^\OS_{j_r} }\rangle}={\langle \rho, \lon \overline{ \beta^\OS_{j_r} }\rangle}$
since $\lon \rho = -\rho$.
Also,
since
$d_{j_r}=
\frac{\langle \lambda_- ,  \overline{\beta^{\OS}_{j_r}}\rangle - a_{j_r}}{\langle \lambda_- ,  \overline{\beta^{\OS}_{j_r} }\rangle}  $,
we have
$(1-d_{j_r})\langle \lambda, \lon \overline{ \beta^\OS_{j_r} }\rangle
=
\left(1-\frac{\langle \lambda , \lon \overline{\beta^{\OS}_{j_r}}\rangle - a_{j_r}}{\langle \lambda , \lon  \overline{\beta^{\OS}_{j_r} }\rangle} \right)
\langle \lambda, \lon \overline{ \beta^\OS_{j_r} }\rangle = a_{j_r}=\dg(\beta^{\OS}_{j_r})$.
Therefore, we deduce that 
\begin{equation*}
		\mathcal{R}(\Xi (p^{\OS}_{J}) )=
\left\{
			\begin{array}{l}
				\mathcal{R}(\Xi (p^{\OS}_{K}) )\frac{t^{-\half}(1-t)}{1- q^{\dg(\beta_{j_r}^\OS)}t^{\langle \rho, - \overline{ \beta^\OS_{j_r} }\rangle}}    \\ \hspace{1.5cm}  \mbox{if} \ 
\dir(\ed(p^{\OS}_{K})) \xrightarrow{ - \overline{ \beta^\OS_{j_r} }^\lor} \dir(\ed(p^{\OS}_{J}))\mbox{ is a Bruhat edge,} \\[3mm]
				\mathcal{R}(\Xi (p^{\OS}_{K}) )\frac{q^{\dg(\beta_{j_r}^\OS)}t^{\langle \rho, - \overline{ \beta^\OS_{j_r} }\rangle-\half}(1-t)}{1- q^{\dg(\beta_{j_r}^\OS)}t^{\langle \rho, - \overline{ \beta^\OS_{j_r} }\rangle}}    \\ \hspace{1.5cm}    \mbox{if} \ 
\dir(\ed(p^{\OS}_{K})) \xrightarrow{ - \overline{ \beta^\OS_{j_r} }^\lor} \dir(\ed(p^{\OS}_{J}))\mbox{ is a quantum edge.} \\
			\end{array}
\right.
	\end{equation*}
This proves Claim 1 (2).
\bqed

\noindent $Proof \ of \ Claim \ 2.$
Let us prove part (1).
Note that $\ed (p^{\OS}_{J}) ) =  \ed (p^{\OS}_{K})   s_{\beta^{\OS}_{j_r }}$,
and that
\begin{equation*}
 \ed (p^{\OS}_{K}) = t(\wt ( \ed (p^{\OS}_{K}) ) ) \dir  ( \ed (p^{\OS}_{K}) )  = t(\wt ( \ed (p^{\OS}_{K}) ) )w_{s-1, t_{s-1}} \lon ;
\end{equation*}
the second equality follows from the comment at the beginning of the proof of Lemma \ref{qls_qb}.
Also, we have
$s_{\beta^{\OS}_{j_r }} 
= s_{a_{j_r}\tilde{\delta}+\overline {\beta^{\OS}_{j_r } }}
=t\left(- a_{j_r}\overline {\beta^{\OS}_{j_r }}^\lor \right) s_{\overline {\beta^{\OS}_{j_r }}}$.
Combining these, we see that 
\begin{align*}
 \ed (p^{\OS}_{J}) 
&=
\left(
 t(\wt ( \ed (p^{\OS}_{K}) ) )w_{s-1, t_{s-1}} \lon
\right)
\left(
 t\left(-a_{j_r}\overline {\beta^{\OS}_{j_r }}^\lor \right)s_{\overline {\beta^{\OS}_{j_r }}}
\right) \\
&=t\left( \wt ( \ed (p^{\OS}_{K}) ) -a_{j_r}w_{s-1, t_{s-1}} \lon \overline {\beta^{\OS}_{j_r }}^\lor \right)
w_{s-1, t_{s-1}} \lon s_{\overline {\beta^{\OS}_{j_r }}},
\end{align*}
and hence that 
\begin{equation*}
\wt( \ed (p^{\OS}_{J}) )
=
\wt ( \ed (p^{\OS}_{K}) ) -a_{j_r}w_{s-1, t_{s-1}} \lon \overline {\beta^{\OS}_{j_r }}^\lor .
\end{equation*}

(2)
If $\dir(\ed(p^{\OS}_{K})) \xrightarrow{ - \overline{ \beta^\OS_{j_r} }^\lor} \dir(\ed(p^{\OS}_{J}))$ is a Bruhat edge,
then $J^{-}=K^{-}$ and $J^+=K^+ \sqcup \{j_r\}$.
Therefore, we see that
\begin{align*}
\mathcal{L}(p^{\OS}_{J})&=
 t^{-\half\#J}(1-t)^{\# J}
 \prod_{j \in J^+}\frac{1}{1- q^{\dg(\beta_j^\OS)}t^{\langle \rho, - \overline{ \beta^\OS_j }\rangle}}
\prod_{j \in J^-}\frac{q^{\dg(\beta_j^\OS)}t^{\langle \rho, - \overline{ \beta^\OS_j }\rangle}}{1- q^{\dg(\beta_j^\OS)}t^{\langle \rho, - \overline{ \beta^\OS_j }\rangle}}\\
&=
 t^{-\half\# K}(1-t)^{\# K}
 \prod_{j \in K^+}\frac{1}{1- q^{\dg(\beta_j^\OS)}t^{\langle \rho, - \overline{ \beta^\OS_j }\rangle}}
\\
&\hspace{2cm}\times 
\left( \prod_{j \in K^-}\frac{q^{\dg(\beta_j^\OS)}t^{\langle \rho, - \overline{ \beta^\OS_j }\rangle}}{1- q^{\dg(\beta_j^\OS)}t^{\langle \rho, - \overline{ \beta^\OS_j }\rangle}} \right)
\frac{t^{-\half}(1-t)}{1- q^{\dg(\beta_{j_r}^\OS)}t^{\langle \rho, - \overline{ \beta^\OS_{j_r} }\rangle}}\\
&=
\mathcal{L}(\Xi (p^{\OS}_{K}) )\frac{t^{-\half}(1-t)}{1- q^{\dg(\beta_{j_r}^\OS)}t^{\langle \rho, - \overline{ \beta^\OS_{j_r} }\rangle}}.
\end{align*}
If $\dir(\ed(p^{\OS}_{K})) \xrightarrow{ - \overline{ \beta^\OS_{j_r} }^\lor} \dir(\ed(p^{\OS}_{J}))$ is a quantum edge,
then $J^{-}=K^{-}\sqcup \{j_r\}$ and $J^+=K^+$.
Therefore, we see that
\begin{align*}
\mathcal{L}(p^{\OS}_{J})&=
 t^{-\half\#J}(1-t)^{\# J}
 \prod_{j \in J^+}\frac{1}{1- q^{\dg(\beta_j^\OS)}t^{\langle \rho, - \overline{ \beta^\OS_j }\rangle}}
\prod_{j \in J^-}\frac{q^{\dg(\beta_j^\OS)}t^{\langle \rho, - \overline{ \beta^\OS_j }\rangle}}{1- q^{\dg(\beta_j^\OS)}t^{\langle \rho, - \overline{ \beta^\OS_j }\rangle}}\\
&=
 t^{-\half\# K}(1-t)^{\# K}
 \prod_{j \in K^+}\frac{1}{1- q^{\dg(\beta_j^\OS)}t^{\langle \rho, - \overline{ \beta^\OS_j }\rangle}}
\\
&\hspace{2cm}\times 
\left( \prod_{j \in K^-}\frac{q^{\dg(\beta_j^\OS)}t^{\langle \rho, - \overline{ \beta^\OS_j }\rangle}}{1- q^{\dg(\beta_j^\OS)}t^{\langle \rho, - \overline{ \beta^\OS_j }\rangle}}
\right)
\frac{q^{\dg(\beta_{j_r}^\OS)}t^{\langle \rho, - \overline{ \beta^\OS_{j_r} }\rangle-\half}(1-t)}{1- q^{\dg(\beta_{j_r}^\OS)}t^{\langle \rho, - \overline{ \beta^\OS_{j_r} }\rangle}}\\
&=
\mathcal{L}(\Xi (p^{\OS}_{K}) )\frac{q^{\dg(\beta_{j_r}^\OS)}t^{\langle \rho, - \overline{ \beta^\OS_{j_r} }\rangle-\half}(1-t)}{1- q^{\dg(\beta_{j_r}^\OS)}t^{\langle \rho, - \overline{ \beta^\OS_{j_r} }\rangle}}.
\end{align*}
This proves Claim 2 (2).
\bqed

This completes the proof of Lemma \ref{qls_qb} by induction on $\#J$.
\end{proof}

\begin{proof}[Proof of Theorem $\ref{theorem_graded_character}$]
(1)
We know from Proposition \ref{os} that
	\begin{align*}
E_{\mu}(q,t)=
\sum_{p^{\OS}_{J} \in \B({\id} ; m_{\mu}) } 
&e^{\wt (\ed (p^{\OS}_J) )}t^{\half (\ell(\dir(\ed( p^{\OS}_{J} ))) - \ell(\dir(m_{\mu})) -  \#J )}(1-t)^{\# J} \\
& \times \prod_{j \in J^+}\frac{1}{1- q^{\dg(\beta_j^\OS)}t^{\langle \rho, - \overline{ \beta^\OS_j }\rangle}}
\prod_{j \in J^-}\frac{q^{\dg(\beta_j^\OS)}t^{\langle \rho, - \overline{ \beta^\OS_j }\rangle}}{1- q^{\dg(\beta_j^\OS)}t^{\langle \rho, - \overline{ \beta^\OS_j }\rangle}}.
	\end{align*}
Therefore, it follows from Proposition \ref{bijective} and Lemma \ref{qls_qb} that
\begin{align*}
E_{\mu}(q,t)
=
\sum_{\widetilde{\eta}\in \PQLS^\mu(\lambda)}
t^{\half(\ell(v(\mu) \lons) - \ell(w_{s-1, t_{s-1}}))}
e^{\wt(\pr( \widetilde{\eta}))}
\mathcal{R}(\widetilde{\eta}).
\end{align*}
Thus, Theorem \ref{theorem_graded_character} (1) is proved.

(2)
We know from Proposition \ref{sos} that
\begin{align*}
P_{\lambda}(q,t)=
\sum_{w \in W^S}
& \sum_{p^{\OS}_{J} \in \B(w ; m_{\lambda}) } 
e^{\wt (\ed (p^{\OS}_J) )}t^{\half (\ell(\dir(\ed( p^{\OS}_{J} )))- \ell(w \dir(m_{\lambda}))  - \#J ) }(1-t)^{\# J} \\
& \times \prod_{j \in J^+}\frac{1}{1- q^{\dg(\beta_j^\OS)}t^{\langle \rho, - \overline{ \beta^\OS_j }\rangle}}
\prod_{j \in J^-}\frac{q^{\dg(\beta_j^\OS)}t^{\langle \rho, - \overline{ \beta^\OS_j }\rangle}}{1- q^{\dg(\beta_j^\OS)}t^{\langle \rho, - \overline{ \beta^\OS_j }\rangle}}.
\end{align*}
Therefore, it follows from Proposition \ref{bijective} and Lemma \ref{qls_qb} that
\begin{align*}
P_{\lambda}(q,t)&=
\sum_{w \in W^S}
\sum_{\widetilde{\eta} \in  w(\PQLS^\lambda(\lambda)) }
t^{\half(\ell(w \lons )- \ell(w_{s-1, t_{s-1}}))}
e^{\wt(\pr(\widetilde{\eta}))}\mathcal{R}(\widetilde{\eta})\\
&=
\sum_{\widetilde{\eta} \in \bigsqcup_{w \in W^S} w(\PQLS^\lambda(\lambda)) }
t^{\half(\ell(w \lons )- \ell(w_{s-1, t_{s-1}}))}
e^{\wt(\pr(\widetilde{\eta}))}\mathcal{R}(\widetilde{\eta}).
\end{align*}
Thus, Theorem \ref{theorem_graded_character} (2) is proved.
\end{proof}

\section{Pseudo-crystal structure on $\BPQLS(\lambda)$} \label{pseudo-cryst}

\subsection{Pseudo-crystals}

\begin{defi} \label{dfn:pc}
A pseudo-crystal is a set $\BB$, equipped with the maps 
$\wt:\BB \rightarrow P$, $e_{\alpha},\,f_{\alpha}:\BB \sqcup \{\zero\} 
\rightarrow \BB \sqcup \{\zero\}$ for $\alpha \in \Delta$, and 
$\ve_{\alpha},\,\vp_{\alpha}:\BB \rightarrow \BZ \sqcup \{-\infty\}$
for $\alpha \in \Delta$, satisfying the following conditions, 
where $\zero$ is an extra element not contained in $\BB$ such that 
$e_{\alpha}\zero = f_{\alpha}\zero = \zero$ for all $\alpha \in \Delta$, 
and $-\infty$ is an extra element not contained in $\BZ$ such that 
$a+(-\infty) = (-\infty) + a = -\infty$ for all $a \in \BZ$: 
\begin{enu}
\item[(i)] for $b \in \BB$ and $\alpha \in \Delta$, 
$\vp_{\alpha}(b) = \ve_{\alpha}(b) + \pair{\wt(b)}{\alpha^{\vee}}$ 
; 

\item[(ii)] for $b \in \BB$ and $\alpha \in \Delta$, 
if $e_{\alpha}b \ne \zero$, then $\wt(e_{\alpha}b)=\wt(b)+\alpha$; 

\item[(iii)] for $b \in \BB$ and $\alpha \in \Delta$, 
if $f_{\alpha}b \ne \zero$, then $\wt(f_{\alpha}b)=\wt(b)-\alpha$; 

\item[(iv)] for $b \in \BB$ and $\alpha \in \Delta$, 
if $e_{\alpha}b \ne \zero$, then $\ve_{\alpha}(e_{\alpha}b)=\ve_{\alpha}(b)-1$, 
$\vp_{\alpha}(e_{\alpha}b)=\vp_{\alpha}(b)+1$; 

\item[(v)] for $b \in \BB$ and $\alpha \in \Delta$, 
if $f_{\alpha}b \ne \zero$, then $\ve_{\alpha}(f_{\alpha}b)=\ve_{\alpha}(b)+1$, 
$\vp_{\alpha}(f_{\alpha}b)=\vp_{\alpha}(b)-1$; 

\item[(vi)] for $b,\,b' \in \BB$ and $\alpha \in \Delta$, 
$f_{\alpha}b = b'$ if and only if $b = e_{\alpha} b'$; 

\item[(vii)] for $b \in \BB$, 
if $\vp_{\alpha}(b) = -\infty$, then $e_{\alpha}b = f_{\alpha}b = \zero$. 
\end{enu}
\end{defi}

\begin{defi} \label{dfn:pcg}
The pseudo-crystal graph of a pseudo-crystal $\BB$ is the directed graph 
with vertex set $\BB$ and directed edges labeled by all roots: 
for $b,b' \in \BB$, and $\alpha \in \Delta$, 
$b \xrightarrow{\alpha} b'$ if $b'=f_{\alpha}b$. 
A pseudo-crystal is said to be connected if its pseudo-crystal graph is connected. 
\end{defi}

\subsection{Root operators on $\BPQLS(\lambda)$} 

We identify an element
$\eta = (w_1, \ldots, w_s; \sigma_0, \ldots, \sigma_s)\in \BPQLS(\lambda)$
with
the following piecewise linear, continuous map $\eta : [0,1] \rightarrow \mathfrak{h}^*_\mathbb{R}$:
\begin{equation*}
\eta(t)= \sum_{k =1}^{p-1} (\sigma_k - \sigma_{k-1})w_k \lambda + (t-\sigma_{p-1})w_p \lambda
\ 
\mbox{ for }
\sigma_{p-1} \leq t \leq \sigma_p, 1 \leq p \leq s.
\end{equation*}

\begin{lemm}[{cf. \cite[Lemma~4.5(a)]{L2}}] \label{lem:wt}
For $\eta \in \BPQLS(\lambda)$, $\wt(\eta) := \eta(1) \in P$. 
\end{lemm}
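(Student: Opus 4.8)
The plan is to evaluate the piecewise-linear map $\eta$ at its endpoint and then rewrite the result by summation by parts so that each summand is visibly an integral weight. Taking $t = 1 = \sigma_s$ (so that $p = s$) in the defining formula for $\eta$ gives
\begin{equation*}
\eta(1) = \sum_{k=1}^{s-1}(\sigma_k - \sigma_{k-1})w_k\lambda + (\sigma_s - \sigma_{s-1})w_s\lambda = \sum_{k=1}^{s}(\sigma_k - \sigma_{k-1})w_k\lambda,
\end{equation*}
which is precisely the quantity $\wt(\eta)$ introduced earlier in this subsection.

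Next I would apply Abel summation. Using $\sigma_0 = 0$ and $\sigma_s = 1$, an index shift in the telescoping difference yields
\begin{equation*}
\sum_{k=1}^{s}(\sigma_k - \sigma_{k-1})w_k\lambda = w_s\lambda + \sum_{k=1}^{s-1}\sigma_k\bigl(w_k\lambda - w_{k+1}\lambda\bigr).
\end{equation*}

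The key step is to observe that every summand on the right lies in $P$. The leading term $w_s\lambda$ belongs to $W\lambda \subset P$, since $w_s \in W^S \subset W$ and $\lambda \in P^+$. For each $1 \leq k \leq s-1$, condition (C) in Definition \ref{def_qls} furnishes a directed path from $w_{k+1}$ to $w_k$ in $\DBG_{\sigma_k\lambda}$; applying Remark \ref{rem:1.3} with $b = \sigma_k$, $u = w_{k+1}$, and $v = w_k$ then gives $\sigma_k(w_k\lambda - w_{k+1}\lambda) \in P$. Summing these memberships shows $\wt(\eta) = \eta(1) \in P$, as desired.

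I do not expect any genuine obstacle here: the argument is essentially a summation by parts combined with Remark \ref{rem:1.3}. The only points requiring care are matching the endpoint evaluation $\eta(1)$ with the previously defined weight $\wt(\eta)$, and carrying out the index shift in the summation-by-parts step correctly, using the boundary values $\sigma_0 = 0$ and $\sigma_s = 1$ to clear the endpoint terms.
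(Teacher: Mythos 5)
Your proof is correct and coincides with the paper's own argument: both evaluate $\eta(1)=\sum_{k=1}^{s}(\sigma_k-\sigma_{k-1})w_k\lambda$, rewrite it by Abel summation as $w_s\lambda$ plus the terms $\sigma_k(w_k\lambda-w_{k+1}\lambda)$, and invoke Remark~\ref{rem:1.3} together with condition (C) of Definition~\ref{def_qls} to see each summand lies in $P$. (The paper states the summands with the opposite sign, which is immaterial since $P$ is a lattice.)
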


\begin{proof}
Let $\eta = (w_1, \ldots, w_s; \sigma_0, \ldots, \sigma_s) \in \BPQLS(\lambda)$. 
We see that 
\begin{equation*}
\eta(1)
=
\sum_{u=1}^{s}(\sigma_u -\sigma_{u-1})w_u \lambda = \underbrace{\sigma_s w_s \lambda}_{=w_s \lambda} + 
\sum_{u=1}^{s-1}\underbrace{\sigma_u(w_{u+1}\lambda - w_u\lambda)}_{ \text{\rm $\in P$ by Remark \ \ref{rem:1.3}} } \in P.
\end{equation*}
This proves the lemma. 
\end{proof}

We will define the root operator
$e_\alpha$ for a root $\alpha \in \Delta$
(not necessarily simple nor positive)
as in \cite{L1}.
Fix $\alpha \in \Delta$.
Let $\eta = (w_1, \ldots, w_s; \sigma_0, \ldots, \sigma_s) 
\in \BPQLS(\lambda)$. We set
\begin{align*}
H(t)  = H^\eta_\alpha (t) := \langle \eta(t), \alpha^\lor \rangle \ \ \mbox{for } t \in [0,1], \\
m= m^\eta_\alpha
:= 
\min \left( 
\{
H^\eta_\alpha (t)
\ | \
t \in [0,1]
\}
\cap\mathbb{Z}
\right);
\end{align*}
note that
$m \in \mathbb{Z}_{\geq 0}$
since $H(0)=0$.
If $m =0$, then we set $e_\alpha \eta := \zero$.
If $m \leq -1$, then we define $0 \leq t_0 < t_1 \leq 1$
by
\begin{equation}\label{eq_3.1}
t_1 := \min \{ t \in [0,1] \mid H(t)= m \},
\end{equation}
\begin{equation}\label{eq_3.2}
t_0 := \max
\{t \in [0, t_1] \mid H(t)= m+1 \};
\end{equation}
observe that $t_0$ is a unique point in $[0,t_1]$
such that
\begin{equation*}
H(t_0) = m+1 \ \mbox{ and } \ m < H(t) < m+1 
\mbox{ for } t_0 < t <t_1.
\end{equation*}
Now we define
\begin{equation} \label{eq:ea}
(e_\alpha \eta)(t) := 
\begin{cases}
\eta(t )& \mbox{ for }0 \leq t \leq t_0, \\
\eta(t_0)+ s_\alpha(\eta(t)-\eta(t_0))& \mbox{ for }t_0 \leq t \leq t_1, \\
\eta(t) + \alpha & \mbox{ for }t_1\leq t \leq 1. 
\end{cases}
\end{equation}

\begin{rem} \label{rem:ealpha}
Let $0 \leq p \leq q \leq s-1$ be such that $\sigma_{p-1} \leq t_0 < \sigma_{p}$
and $\sigma_{q-1} < t_1 \leq \sigma_{q}$; note that $p \le q$. Then, 
\begin{align}
& e_\alpha \eta = 
(w_1, \ldots , w_p, \lfloor s_\alpha w_{p}\rfloor, \lfloor  s_{\alpha} w_{p+1}\rfloor,  \ldots , \lfloor  s_\alpha w_{q}\rfloor,  w_{q}, w_{q+1},\ldots , w_s; \nonumber \\ 
& \hspace*{20mm}
\sigma_0, \ldots, \sigma_{p-1}, t_0, \sigma_p, \ldots, \sigma_{q-1}, t_1, \sigma_{q}, \ldots , \sigma_s ).  \label{eq_3.3}
\end{align}
Here, if $t_{0}=\sigma_{p-1}$, then we drop $w_{p}$ and $\sigma_{p-1}$; 
if $t_{1}=\sigma_{q}$ (and $\lfloor s_\alpha w_{q} \rfloor \ne w_{q+1}$), 
then we drop $w_{q}$ and $\sigma_{q}$; 
if $t_{1}=\sigma_{q}$ and $\lfloor s_\alpha w_{q} \rfloor = w_{q+1}$, 
then we drop $w_{q+1}$ and $t_{1}$, $\sigma_{q}$. 
\end{rem}

\begin{pro}\label{3.3}
Let $\eta \in \BPQLS(\lambda)$, and $\alpha \in \Delta$.
If $e_\alpha \eta \neq \zero$, then $e_\alpha \eta \in \BPQLS(\lambda)$ and 
$\wt(e_\alpha \eta) = \wt(\eta)+\alpha$. 
\end{pro}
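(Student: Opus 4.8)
The plan is to treat the two assertions separately: the weight identity is immediate, and the membership $e_\alpha\eta\in\BPQLS(\lambda)$ carries the real content. For the weight, once $e_\alpha\eta\in\BPQLS(\lambda)$ is established, Lemma~\ref{lem:wt} identifies $\wt(e_\alpha\eta)$ with the value of the path at $t=1$, and by the third branch of \eqref{eq:ea} this value is $\eta(1)+\alpha=\wt(\eta)+\alpha$. So the task is to show that $e_\alpha\eta$, presented by the explicit sequence in Remark~\ref{rem:ealpha} (here $e_\alpha\eta\ne\zero$ means $m\le-1$, so $t_0<t_1$ are genuinely defined), is again a $\PQLS$ path. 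Its direction entries lie in $W^S$ (they are the original $w_k$ or the coset representatives $\lfloor s_\alpha w_k\rfloor$); its breakpoints $\sigma_0,\ldots,t_0,\ldots,t_1,\ldots,\sigma_s$ form a strictly increasing sequence of rationals, since $t_0,t_1$ solve $H(t)=m+1,\,H(t)=m$ on linear pieces with integer slopes $\langle w_k\lambda,\alpha^\lor\rangle$; and the crux is that condition (C) holds at every breakpoint.

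I would organize the verification of (C) by the three kinds of consecutive pairs produced by Remark~\ref{rem:ealpha}. First, the pairs lying entirely inside the unchanged initial segment $w_1,\ldots,w_p$ or the translated final segment $w_q,\ldots,w_s$ inherit (C) directly from $\eta$. Second, for a pair $\lfloor s_\alpha w_{i+1}\rfloor,\,\lfloor s_\alpha w_i\rfloor$ inside the reflected middle segment, at the old breakpoint $\sigma_i$, I would start from the directed path $w_{i+1}\to w_i$ in $\DBG_{\sigma_i\lambda}$ granted by (C) for $\eta$, apply Lemma~\ref{involution}(1) to left-multiply by $s_\alpha$ (which preserves every edge together with its label, hence preserves membership in $\DBG_{\sigma_i\lambda}$), obtaining $s_\alpha w_{i+1}\to s_\alpha w_i$, and then pass to minimal coset representatives by Lemma~\ref{parabolic_DBG}(2) to get $\lfloor s_\alpha w_{i+1}\rfloor\to\lfloor s_\alpha w_i\rfloor$ in $\DBG_{\sigma_i\lambda}$.

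The main obstacle is the third kind: the two boundary pairs, at $t_0$ (between $w_p$ and $\lfloor s_\alpha w_p\rfloor$) and at $t_1$ (between $\lfloor s_\alpha w_q\rfloor$ and $w_q$). Here I must connect $w$ and $\lfloor s_\alpha w\rfloor$ (with $w=w_p$, resp.\ $w_q$) by a path in $\DBG_{t_0\lambda}$, resp.\ $\DBG_{t_1\lambda}$. The natural device is the single edge $w\xrightarrow{|w^{-1}\alpha|}s_\alpha w$ (and its reverse $s_\alpha w\xrightarrow{|w^{-1}\alpha|}w$, with the same label), which lies in $\DBG_{t^*\lambda}$ exactly when $t^*\langle w\lambda,\alpha^\lor\rangle\in\mathbb{Z}$; concatenating with the inherited edge and applying Lemma~\ref{parabolic_DBG}(2) then closes the pair in the required orientation. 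The difficulty is that, unlike classical LS paths, the breakpoint values $\eta(\sigma_j)$ need \emph{not} lie in $P$, so integrality of $\eta$ at breakpoints is unavailable. I would instead prove the key congruence: if $t$ lies in the $q$-th linear piece $[\sigma_{q-1},\sigma_q]$ (direction $w_q$), then $t\,w_q\lambda\equiv\eta(t)\pmod P$. This follows from the Abel-summation identity $\eta(\sigma_j)=\sigma_j w_j\lambda-\sum_{k=1}^{j-1}\sigma_k(w_{k+1}\lambda-w_k\lambda)$ together with Remark~\ref{rem:1.3}, which gives $\sigma_k(w_{k+1}\lambda-w_k\lambda)\in P$ for each $k$. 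Pairing with $\alpha^\lor$ yields $t\langle w_q\lambda,\alpha^\lor\rangle\equiv H(t)\pmod{\mathbb{Z}}$; and since $H(t_1)=m$ and $H(t_0)=m+1$ are integers by the very construction of $t_0,t_1$, I conclude $t_1\langle w_q\lambda,\alpha^\lor\rangle\in\mathbb{Z}$ and $t_0\langle w_p\lambda,\alpha^\lor\rangle\in\mathbb{Z}$, which is precisely the integrality that places the connecting edges in $\DBG_{t_1\lambda}$ and $\DBG_{t_0\lambda}$.

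Finally, I would dispose of the degenerate situations listed in Remark~\ref{rem:ealpha} (when $t_0=\sigma_{p-1}$, or $t_1=\sigma_q$, or $\lfloor s_\alpha w_q\rfloor=w_{q+1}$): in each case a direction or breakpoint is merged or dropped, and the corresponding instance of (C) collapses to one already treated, upon composing an inherited path with the connecting edge supplied by the key congruence evaluated at the relevant breakpoint. Assembling the three kinds of pairs shows $e_\alpha\eta\in\BPQLS(\lambda)$, whereupon the weight identity follows as above.
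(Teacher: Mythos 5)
Your proposal is correct and follows essentially the same route as the paper's proof: both reduce to verifying condition (C) at the three kinds of breakpoints, handle the reflected middle segment by left-multiplying inherited paths by $s_\alpha$ (Lemma~\ref{involution}(1)) and descending to coset representatives via Lemma~\ref{parabolic_DBG}(2), and obtain the connecting edges at $t_0$ and $t_1$ from the integrality of $H(t_0)=m+1$ and $H(t_1)=m$ combined with Remark~\ref{rem:1.3} --- your ``key congruence'' is precisely the paper's computation \eqref{eq:33a}. The only differences are presentational: you treat the degenerate boundary cases explicitly where the paper declares them similar, and you use the unsigned label $|w^{-1}\alpha|$ to avoid checking the sign of $\langle w_p\lambda,\alpha^\vee\rangle$, which the paper instead deduces from the maximality of $t_0$.
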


\begin{proof}
Keep the setting of Remark~\ref{rem:ealpha}. 
We give a proof in the case that $t_1 \neq \sigma_q$ and $t_0 \neq \sigma_{p-1}$;
the proofs in the other cases are similar.
By \eqref{eq_3.3} and Lemma \ref{parabolic_DBG}, it suffices to show that 
\begin{enu}
\item[(i)]
there exists a path from $s_\alpha w_p$ to $w_p$ in $\DBG_{t_0 \lambda}$;

\item[(ii)]
for each $p \leq u \leq q-1$,
there exists a path from $s_\alpha w_{u+1}$ to $s_\alpha w_u$ in $\DBG_{\sigma_u \lambda}$;

\item[(iii)]
there exists a path from $w_q$ to $s_\alpha w_q$ in $\DBG_{t_1 \lambda}$.
\end{enu}
Let us show (i). We have
\begin{equation} \label{eq:33a}
\eta(t_0)
=
\sum_{u=1}^{p-1}(\sigma_u -\sigma_{u-1})w_u \lambda + (t_0 - \sigma_{p-1}) w_p \lambda = t_0 w_p \lambda + 
\sum_{u=1}^{p-1}\underbrace{\sigma_u(w_{u+1}\lambda - w_u\lambda)}_{ \text{\rm $\in P$ by Remark \ \ref{rem:1.3}} }.
\end{equation}
Since
$\langle \eta(t_0), \alpha^\lor  \rangle = H(t_0) = m+1 \in \mathbb{Z}$,
it follows that 
$\langle t_0 w_p \lambda, \alpha^\lor \rangle \in \mathbb{Z}$,
and hence $t_0 \langle  \lambda, w_p^{-1}\alpha^\lor  \rangle \in \mathbb{Z}$; 
by the maximality of $t_0$, we see that $\langle w_p \lambda, \alpha^\lor \rangle = 
\langle \lambda, w_p^{-1}\alpha^\lor \rangle < 0$, which implies that
$w_p^{-1}\alpha \in \Delta^- \setminus \Delta^-_S$. 
Thus we obtain $w_p \xleftarrow{ -w_p^{-1}\alpha } s_\alpha w_p$ in $\DBG_{t_{0} \lambda}$, 
which shows (i). 
Similarly, from the fact that
$\langle \eta(t_1), \alpha^\lor \rangle = H(t_1) = m \in \mathbb{Z}$,
we deduce that $s_\alpha w_q \xleftarrow{-w_q^{-1}\alpha} w_q$ in $\DBG_{t_1 \lambda}$,
which shows (iii).
Part (ii) follows immediately from the definition of $\DBG_{\sigma_u \lambda}$. 
Indeed, if $w_u = w_{u, 0} \xleftarrow{\beta_{u,1}}w_{u, 1}\xleftarrow{\beta_{u,2}} \cdots 
\xleftarrow{\beta_{u,k_u}} w_{u, k_u} = w_{u+1}$
is a directed path from $w_{u+1}$ to $w_u$ in $\DBG_{\sigma_u \lambda}$, then 
\begin{equation*}
s_\alpha w_u =s_\alpha  w_{u, 0} \xleftarrow{\beta_{u,1}}s_\alpha w_{u, 1}\xleftarrow{\beta_{u,2}} \cdots \xleftarrow{\beta_{u,k_u}} s_\alpha w_{u, k_u} = s_{\alpha}w_{u+1}
\end{equation*}
is a directed path from $s_\alpha w_{u+1}$ to $s_\alpha w_u$ in $\DBG_{\sigma_u \lambda}$. 
The equality $\wt(e_\alpha \eta) = \wt(\eta)+\alpha$ follows from \eqref{eq:ea}. 
This proves the proposition.
\end{proof}

We can easily verify the following lemma 
by using the definition of root operators.
\begin{lemm} \label{3.1}
Let $\eta \in \BPQLS(\lambda)$, and $\alpha \in \Delta$.
Assume that $m^\eta_\alpha \leq -1$,
and define $0 \leq t_0 < t_1 \leq 1$ as in \eqref{eq_3.1}, \eqref{eq_3.2} 
for the $\eta$ and $\alpha$.
Then, $m^{e_\alpha \eta}_\alpha = m^\eta_\alpha +1$,
and $H^{e_\alpha \eta}_\alpha (t_0) = m^\eta_\alpha +1 = m^{e_\alpha \eta}_\alpha$,
which implies that $t'_1 := \min \{ t \in [0,1] \mid H^{e_\alpha \eta}_\alpha (t) = 
m^{e_\alpha \eta}_\alpha \}$ is less that or equal to $t_0$.
\end{lemm}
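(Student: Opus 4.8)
The plan is to compute the piecewise-linear function $H^{e_\alpha \eta}_\alpha(t) = \langle (e_\alpha \eta)(t), \alpha^\lor \rangle$ explicitly in terms of $H = H^\eta_\alpha$, using the three-piece definition \eqref{eq:ea} of $e_\alpha \eta$, and then to read off the minimum integer value on each of the three intervals. Writing $m = m^\eta_\alpha$ and using $s_\alpha \nu = \nu - \langle \nu, \alpha^\lor\rangle \alpha$ together with $\langle \alpha, \alpha^\lor\rangle = 2$, which give $\langle s_\alpha \nu, \alpha^\lor\rangle = -\langle \nu, \alpha^\lor\rangle$, one obtains
\begin{equation*}
H^{e_\alpha \eta}_\alpha(t) =
\begin{cases}
H(t) & 0 \le t \le t_0, \\
2(m+1) - H(t) & t_0 \le t \le t_1, \\
H(t) + 2 & t_1 \le t \le 1.
\end{cases}
\end{equation*}
Since $H(t_0) = m+1$ and $H(t_1) = m$ by \eqref{eq_3.1} and \eqref{eq_3.2}, this function is continuous (the middle piece reflects $H$ about the level $m+1$).

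Next I would analyze the three intervals using two preliminary facts, both immediate from the definitions of $t_0, t_1$. First, $H(t) > m$ for all $t \in [0, t_0]$: otherwise, since $H(0)=0 > m$ (as $m \le -1$), continuity would force $H$ to attain the integer $m$ at some point $\le t_0 < t_1$, contradicting the minimality of $t_1$. Second, $m < H(t) < m+1$ for $t_0 < t < t_1$, which is the uniqueness observation already recorded after \eqref{eq_3.2}. From the first fact, on $[0,t_0]$ the function $H^{e_\alpha\eta}_\alpha = H$ attains only integer values $\ge m+1$, with $m+1$ attained at $t_0$; from the second fact, on $[t_0, t_1]$ the reflected function satisfies $m+1 \le H^{e_\alpha\eta}_\alpha \le m+2$, so its minimum integer value is again $m+1$, attained at $t_0$. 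On $[t_1,1]$, the shift $v \mapsto v+2$ is a bijection between the integer values of $H$ and those of $H^{e_\alpha\eta}_\alpha = H+2$; since the minimum integer value of $H$ on $[t_1,1]$ is $m$ (attained at $t_1$, and nothing smaller occurs because $m$ is the global minimum integer value of $H$), the minimum integer value of $H^{e_\alpha\eta}_\alpha$ there is $m+2$.

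Combining the three intervals, the set of integer values of $H^{e_\alpha\eta}_\alpha$ is contained in $\{m+1, m+2, \ldots\}$ and contains $m+1$, so its minimum is $m+1$; that is, $m^{e_\alpha \eta}_\alpha = m^\eta_\alpha + 1$. Because $H^{e_\alpha\eta}_\alpha(t_0) = H(t_0) = m+1 = m^{e_\alpha\eta}_\alpha$, the point $t_0$ realizes this minimum integer value, and therefore $t'_1 = \min\{t \in [0,1] \mid H^{e_\alpha\eta}_\alpha(t) = m^{e_\alpha\eta}_\alpha\} \le t_0$, as asserted.

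I do not expect a serious obstacle, as the argument is a direct computation once the explicit form of $H^{e_\alpha\eta}_\alpha$ is in hand. The only points requiring genuine care are the first preliminary fact (that $H$ stays strictly above $m$ on $[0,t_0]$), which is exactly what rules out a spurious integer value below $m+1$ on that interval, and the clean observation that translating $H$ by $2$ translates its set of attained integer values by $2$, which disposes of the interval $[t_1,1]$ without having to track the precise minimum of $H$ there.
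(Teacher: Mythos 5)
Your proof is correct and is essentially the paper's (intended) argument: the paper states this lemma with no written proof, saying only that it "can easily be verified using the definition of root operators," and your three-piece formula for $H^{e_\alpha \eta}_\alpha$ (identity on $[0,t_0]$, reflection $2(m+1)-H$ on $[t_0,t_1]$, shift $H+2$ on $[t_1,1]$) together with the interval-by-interval analysis of attained integer values is precisely that verification, written out in full. The two points you flag---that $H>m$ on $[0,t_0]$ by minimality of $t_1$, and that the $+2$ shift translates the set of attained integer values---are exactly the details needed to make the omitted argument rigorous.
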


The following is an immediate consequence of Lemma~\ref{3.1}. 
\begin{cor}\label{3.2}
Let $\eta \in \BPQLS(\lambda)$, and $\alpha \in \Delta$.
\begin{enu}
\item If we set $\ve_\alpha (\eta) := \max \{ n \geq 0 \mid e^n_\alpha \eta \neq \zero \}$,
then we have $\ve_\alpha (\eta) = - m_\alpha^\eta$ {\rm (cf. \cite[Proposition (a) in \S1.5]{L1} 
and \cite[Lemma~2.1 (c)]{L2})}.

\item
For each $1 \leq k \leq N := \varepsilon_\alpha (\eta) -1 $,
define $0 \leq t_0^{(k)} < t_1^{(k)}\leq 1$
as in {\rm(\ref{eq_3.1})} and {\rm(\ref{eq_3.2})} for $e_\alpha^k \eta$ and $\alpha$.
Then, we have 
\begin{equation*}
0 \leq 
\underbrace{t_0^{(N)} < t_1^{(N)}}_{ \text{\rm for $e_\alpha^N \eta$} }
\leq
\underbrace{t_0^{(N-1)} < t_1^{(N-1)}}_{ \text{\rm for $e_\alpha^{N-1} \eta$} }
\leq \cdots \leq
\underbrace{t_0^{(1)} < t_1^{(1)}}_{ \text{\rm for $e_\alpha \eta$} }
\leq
\underbrace{t_0^{(0)} < t_1^{(0)}}_{ \text{\rm for $\eta$} }
\leq 1,
\end{equation*}
with
\begin{align*}
t_1^{(k)}  &= \min\{ 
t \in [0,1] \mid 
H^{\eta}_{\alpha} (t) = m_\alpha^\eta +k
\},
\\
t_0^{(k)}  &= \max\{ 
t \in [0,t_1^{(k)}] \mid 
H^{\eta}_{\alpha} (t) = m_\alpha^\eta +k+1
\};
\end{align*}
note that $t_0^{(k)}$ is a unique point in $[0, t_1^{(k)}]$ such that
\begin{equation*}
H_\alpha^\eta(t_0^{(k)}) = m^\eta_\alpha +k+1,
\ \text{\rm and} \ 
m_\alpha^\eta+k
<
H_\alpha^\eta (t)
<
m_\alpha^\eta+k+1
\ \text{\rm for} \ 
t_0^{(k)} <t <t_1^{(k)}.
\end{equation*}
\end{enu}
\end{cor}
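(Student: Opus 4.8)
The plan is to derive both parts by iterating the root operator $e_\alpha$, tracking how the height function $H^\eta_\alpha$ and its integral minimum $m^\eta_\alpha$ transform under each application, and feeding everything through Lemma~\ref{3.1} and Proposition~\ref{3.3}. For Part (1), I would first prove by induction on $k$ that for $0 \leq k \leq -m^\eta_\alpha$ one has $e^k_\alpha\eta \neq \zero$, that $e^k_\alpha\eta \in \BPQLS(\lambda)$, and that $m^{e^k_\alpha\eta}_\alpha = m^\eta_\alpha + k$. The base case $k=0$ is trivial. For the inductive step, if $m^{e^k_\alpha\eta}_\alpha = m^\eta_\alpha + k \leq -1$, then by the definition of $e_\alpha$ we get $e^{k+1}_\alpha\eta \neq \zero$, by Proposition~\ref{3.3} it lies in $\BPQLS(\lambda)$, and by Lemma~\ref{3.1} its minimum is $m^\eta_\alpha + k + 1$. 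Taking $k = -m^\eta_\alpha$ yields $m^{e^{-m^\eta_\alpha}_\alpha\eta}_\alpha = 0$, so a further application of $e_\alpha$ produces $\zero$; hence $\ve_\alpha(\eta) = -m^\eta_\alpha$.

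For the nesting chain in Part (2), Lemma~\ref{3.1} applied to $e^k_\alpha\eta$ in place of $\eta$ states exactly that $t_1^{(k+1)} = \min\{t \mid H^{e^{k+1}_\alpha\eta}_\alpha(t) = m^{e^{k+1}_\alpha\eta}_\alpha\} \leq t_0^{(k)}$ for $0 \leq k \leq N-1$. Combined with $t_0^{(k)} < t_1^{(k)}$, which holds by definition of the pair for each $e^k_\alpha\eta$, this immediately assembles into the full chain $0 \leq t_0^{(N)} < t_1^{(N)} \leq \cdots \leq t_0^{(0)} < t_1^{(0)} \leq 1$.

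For the explicit formulas in Part (2), the crux is the claim, proved by induction on $k$, that $H^{e^k_\alpha\eta}_\alpha = H^\eta_\alpha$ on $[0, t_1^{(k)}]$. In the inductive step I would combine two observations: by the definition \eqref{eq:ea} of $e_\alpha$, the path is left unchanged on $[0, t_0^{(k)}]$, so $H^{e^{k+1}_\alpha\eta}_\alpha = H^{e^k_\alpha\eta}_\alpha$ there; and by the nesting $t_1^{(k+1)} \leq t_0^{(k)} < t_1^{(k)}$, so on $[0, t_1^{(k+1)}]$ the inductive hypothesis already gives $H^{e^k_\alpha\eta}_\alpha = H^\eta_\alpha$. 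Granting this claim, the defining formulas for $t_1^{(k)}$ and $t_0^{(k)}$ attached to $e^k_\alpha\eta$, together with $m^{e^k_\alpha\eta}_\alpha = m^\eta_\alpha + k$ from Part (1), translate verbatim into the stated expressions in terms of $H^\eta_\alpha$; the uniqueness characterization of $t_0^{(k)}$ transfers directly from the "observe that $\ldots$" remark following \eqref{eq_3.2}.

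I expect the only genuine subtlety to be the bookkeeping that ensures the first-hitting time $t_1^{(k)}$ of the minimum of $H^{e^k_\alpha\eta}_\alpha$ actually lands inside the interval $[0, t_0^{(k-1)}]$ on which that height function still coincides with $H^\eta_\alpha$ --- precisely the content of the inequality $t_1^{(k)} \leq t_0^{(k-1)}$ furnished by Lemma~\ref{3.1}. Once this inclusion is in place, no new estimate is required, which is why the corollary can fairly be called an immediate consequence of Lemma~\ref{3.1}.
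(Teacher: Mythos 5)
Your proposal is correct and takes essentially the same route as the paper: the paper obtains Corollary~\ref{3.2} as an ``immediate consequence'' of Lemma~\ref{3.1}, which is exactly the iteration you carry out --- applying Lemma~\ref{3.1} to $e_\alpha^k\eta$ at each step (with Proposition~\ref{3.3} guaranteeing $e_\alpha^k\eta \in \BPQLS(\lambda)$ so the lemma applies) to get $m_\alpha^{e_\alpha^{k+1}\eta}=m_\alpha^\eta+k+1$ and $t_1^{(k+1)}\le t_0^{(k)}$. Your inductive claim that $H^{e_\alpha^k\eta}_\alpha$ agrees with $H^\eta_\alpha$ on $[0,t_1^{(k)}]$ is precisely the bookkeeping the paper leaves implicit, and it correctly converts the defining data of $t_0^{(k)},t_1^{(k)}$ for $e_\alpha^k\eta$ into the stated formulas in terms of $H^\eta_\alpha$.
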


Similarly, we will define the root operator
$f_\alpha$ for $\alpha \in \Delta$.
Let $\eta \in \BPQLS(\lambda)$, and $\alpha \in \Delta$. 
If $H(1)=m$, then we set $f_\alpha \eta := \zero$.
If $H(1)-m \ge -1$, then we define
$0 \leq t_0 < t_1 \leq 1$ by
\begin{equation}\label{eq_3.1f}
t_0 := \min \{ t \in [0,1] \mid H(t)= m \},
\end{equation}
\begin{equation}\label{eq_3.2f}
t_1 := \max
\{t \in [t_0, 1] \mid H(t)= m+1 \}.
\end{equation}
Now we define
\begin{equation} \label{eq:fa}
(f_\alpha \eta)(t) :=
\begin{cases}
\eta(t )& \mbox{ for }0 \leq t \leq t_0, \\
\eta(t_0)+ s_\alpha(\eta(t)-\eta(t_0))& \mbox{ for }t_0 \leq t \leq t_1, \\
\eta(t) - \alpha & \mbox{ for }t_1\leq t \leq 1. 
\end{cases}
\end{equation}
In the same way as for 
Proposition \ref{3.3}, we can show that
if $f_\alpha \eta\neq \zero$,
then $f_\alpha \eta \in \BPQLS(\lambda)$ and $\wt(f_\alpha \eta)=\wt(\eta)-\alpha$. 
Also, we deduce that 
$\vp_\alpha (\eta) := \max \{ n \geq 0 \mid f_\alpha^{n} \eta \neq \zero \}$ 
is equal to $H_{\alpha}^{\eta}(1) - m_\alpha^\eta$. 
The proof of the following proposition is
the same as that of \cite[Proposition (b) in \S1.5]{L1}.
\begin{pro}
Let $\eta \in \BPQLS (\lambda)$, and $\alpha \in \Delta$. 
If $e_\alpha \eta \neq \zero$,
then $f_\alpha e_\alpha \eta = \eta$. 
If $f_\alpha \eta \neq \zero$,
then $e_\alpha f_\alpha \eta = \eta$.
\end{pro}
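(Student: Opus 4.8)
The plan is to follow Littelmann's argument in \cite[\S1.5]{L1}, working throughout with the continuous piecewise-linear maps $\eta:[0,1]\to\hrd$ rather than with the combinatorial sequences; the point is that $e_\alpha$ and $f_\alpha$ are built from the reflection $s_\alpha$ and the translation by $\alpha$, which act directly on these maps, and that the passage to $\lfloor\,\cdot\,\rfloor$ in Remark~\ref{rem:ealpha} does not alter the underlying map, since $\lfloor u\rfloor\lambda=u\lambda$. I will prove the first assertion in detail; the second is entirely symmetric, obtained by reflecting about the \emph{last} point where $H$ attains its minimum and replacing $m^\eta_\alpha$ by $m^\eta_\alpha-1$, so I would only indicate this at the end.

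Assume $e_\alpha\eta\neq\zero$, so that $m:=m^\eta_\alpha\le-1$, and keep the cut points $t_0<t_1$ of \eqref{eq_3.1}, \eqref{eq_3.2}, recalling the property noted there: $H(t_0)=m+1$, while $m<H(t)<m+1$ for $t_0<t<t_1$; moreover $H(t)\ge m$ on all of $[0,1]$. Setting $\eta':=e_\alpha\eta$ and using $\pair{s_\alpha v}{\alpha^\lor}=-\pair{v}{\alpha^\lor}$ and $\pair{\alpha}{\alpha^\lor}=2$, I would first compute the height function $H':=H^{\eta'}_\alpha$ from \eqref{eq:ea}:
\begin{equation*}
H'(t)=
\begin{cases}
H(t) & \text{for }0\le t\le t_0,\\
2(m+1)-H(t) & \text{for }t_0\le t\le t_1,\\
H(t)+2 & \text{for }t_1\le t\le 1.
\end{cases}
\end{equation*}
By Lemma~\ref{3.1}, $m^{\eta'}_\alpha=m+1$, and since $H'(1)=H(1)+2\ge m+2>m+1$, the operator $f_\alpha$ is defined on $\eta'$.

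The decisive step is to check that the two points entering the definition of $f_\alpha\eta'$ are again $t_0$ and $t_1$. From the formula for $H'$ together with $H\ge m$ and the strict inequalities $m<H<m+1$ on $(t_0,t_1)$, one reads off that $H'>m+1$ on $(t_0,t_1)$, that $H'\ge m+2>m+1$ on $[t_1,1]$, and that $H'=H$ with $H'(t_0)=m+1$ on $[0,t_0]$; hence $t_0$ is the last point at which $H'$ takes its minimum value $m+1$, and $t_1$ is the first point after $t_0$ at which $H'$ takes the value $m+2$. Therefore $f_\alpha$ fixes $\eta'$ on $[0,t_0]$, reflects $\eta'$ about $\eta'(t_0)=\eta(t_0)$ on $[t_0,t_1]$, and subtracts $\alpha$ on $[t_1,1]$. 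Substituting the three pieces of $\eta'$ and using $s_\alpha^2=\mathrm{id}$ then yields $(f_\alpha\eta')(t)=\eta(t)$ throughout: on $[0,t_0]$ nothing changes; on $[t_0,t_1]$ the double reflection $\eta(t_0)+s_\alpha\big(s_\alpha(\eta(t)-\eta(t_0))\big)=\eta(t)$ cancels; and on $[t_1,1]$ the added $\alpha$ is removed again. This proves $f_\alpha e_\alpha\eta=\eta$.

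I expect the only real obstacle to be the cut-point identification in the previous paragraph: one must rule out any spurious return of $H'$ to the value $m+1$ to the right of $t_0$, which is exactly what the strict inequalities $m<H<m+1$ on $(t_0,t_1)$ and the global bound $H\ge m$ guarantee. Once the cut points are matched, the rest is the mechanical cancellation above. The second assertion $e_\alpha f_\alpha\eta=\eta$ is proved by the mirror-image computation, so I would simply note that it follows in the same way.
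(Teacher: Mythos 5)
Your proof takes exactly the route the paper itself takes: the paper's entire ``proof'' of this proposition is the citation to \cite[Proposition (b) in \S 1.5]{L1}, and what you wrote is a correct execution of that argument --- the computation of $H'=H^{e_\alpha\eta}_{\alpha}$, the matching of the cut points of $f_\alpha$ on $e_\alpha\eta$ with $(t_0,t_1)$, the double-reflection/translation cancellation, and the (correct) observation that passing to $\lfloor\cdot\rfloor$ never changes the underlying map, so that equality of maps suffices. One caveat on conventions: your characterization of the $f_\alpha$ cut points (\emph{last} attainment of the minimal integer value, then \emph{first} attainment of that value plus one to its right) is Littelmann's; the paper's displays \eqref{eq_3.1f}--\eqref{eq_3.2f} literally say the opposite ($t_0=\min\{t \mid H(t)=m\}$, $t_1=\max\{t\ge t_0 \mid H(t)=m+1\}$). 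Under that literal reading the proposition is false --- e.g.\ in type $A_1$ with $\lambda=4\varpi$ and $\eta=(s,e,s,e;0,\tfrac14,\tfrac12,\tfrac34,1)$ one gets $f_\alpha e_\alpha\eta\neq\eta$ --- so those displays must be read with $\min$ and $\max$ interchanged, as you implicitly did; this is a typo in the paper, not an error on your side, but you should say explicitly that this is the definition you are using.

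There is, however, one genuine flaw in your justification: the ``global bound $H\ge m$'' that you recall and then use is not stated in the paper and is false for pseudo-quantum LS paths. Here $m=m^\eta_\alpha$ is by definition the minimal \emph{integer} value attained by $H$, not the minimum of $H$, and, unlike LS or QLS paths, pQLS paths can have non-integral local minima: in type $A_2$ with $\lambda=\varpi_1+\varpi_2$, the path $\eta=(\lon,e;0,\tfrac12,1)\in\BPQLS(\lambda)$ has $H^{\eta}_{\alpha_1}(\tfrac12)=-\tfrac12<0=m^{\eta}_{\alpha_1}$. So you may not invoke $H\ge m$ as a known property. Fortunately, everywhere you use it, the correct weaker statement suffices, namely that $H$ never takes any \emph{integer} value below $m$ (immediate from the definition of $m$): on $[t_1,1]$ an equality $H'(t)=H(t)+2=m+1$ would force $H(t)=m-1$, which is impossible, so $H'$ never returns to the value $m+1$ to the right of $t_0$ (on $(t_0,t_1)$ your strict inequalities already rule this out); and $H'(1)=H(1)+2\ge m+2$ because $H(1)\in\BZ$ by Lemma~\ref{lem:wt}, whence $f_\alpha(e_\alpha\eta)\neq\zero$. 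With this one-line repair your identification of the cut points --- and hence the whole proof, together with its mirror image for $e_\alpha f_\alpha$ --- is correct.
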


\begin{theo}
The set $\BPQLS(\lambda)$, equipped with the maps 
$\wt:\BPQLS(\lambda) \rightarrow P$, 
$e_{\alpha},\,f_{\alpha}:\BPQLS(\lambda) \sqcup \{\zero\} 
\rightarrow \BPQLS(\lambda) \sqcup \{\zero\}$ for $\alpha \in \Delta$, and 
$\ve_{\alpha},\,\vp_{\alpha}:\BPQLS(\lambda) \rightarrow \BZ_{\ge 0}$ 
for $\alpha \in \Delta$, becomes a pseudo-crystal. 
\end{theo}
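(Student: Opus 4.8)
The plan is to verify conditions (i)--(vii) of Definition~\ref{dfn:pc} one at a time, drawing on the results already established in this subsection; most of the substantive work is done, and what remains is bookkeeping. First I would record that the maps $\ve_\alpha$ and $\vp_\alpha$ really do land in $\BZ_{\ge 0}$: by Corollary~\ref{3.2}(1) we have $\ve_\alpha(\eta)=-m^\eta_\alpha\ge 0$ since $m^\eta_\alpha\le 0$ (as $H^\eta_\alpha(0)=0$), and $\vp_\alpha(\eta)=H^\eta_\alpha(1)-m^\eta_\alpha\ge 0$ because $m^\eta_\alpha$ is the minimum of $H^\eta_\alpha$. In particular $\vp_\alpha$ never takes the value $-\infty$, so condition (vii) holds vacuously.

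Next, condition (i) follows by combining the three identities $\ve_\alpha(\eta)=-m^\eta_\alpha$, $\vp_\alpha(\eta)=H^\eta_\alpha(1)-m^\eta_\alpha$, and $H^\eta_\alpha(1)=\langle \eta(1),\alpha^\lor\rangle=\langle \wt(\eta),\alpha^\lor\rangle$ (the last by Lemma~\ref{lem:wt}), which immediately gives $\vp_\alpha(\eta)=\ve_\alpha(\eta)+\langle \wt(\eta),\alpha^\lor\rangle$. Conditions (ii) and (iii) are precisely Proposition~\ref{3.3} and its $f_\alpha$-analogue stated just after Corollary~\ref{3.2}. Condition (vi) is exactly the content of the (unnumbered) Proposition immediately preceding this theorem: for $b,b'\in\BPQLS(\lambda)$, if $f_\alpha b=b'$ then $e_\alpha b'=e_\alpha f_\alpha b=b$, and conversely if $b=e_\alpha b'$ then $f_\alpha b=f_\alpha e_\alpha b'=b'$.

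The main point is condition (iv), from which (v) will then follow formally. Assuming $e_\alpha\eta\ne\zero$ (equivalently $m^\eta_\alpha\le -1$), Lemma~\ref{3.1} gives $m^{e_\alpha\eta}_\alpha=m^\eta_\alpha+1$, whence $\ve_\alpha(e_\alpha\eta)=-m^{e_\alpha\eta}_\alpha=\ve_\alpha(\eta)-1$. For the $\vp$-part, the third branch of \eqref{eq:ea} shows $(e_\alpha\eta)(1)=\eta(1)+\alpha$, so $H^{e_\alpha\eta}_\alpha(1)=\langle\eta(1)+\alpha,\alpha^\lor\rangle=H^\eta_\alpha(1)+2$; combining this with $m^{e_\alpha\eta}_\alpha=m^\eta_\alpha+1$ yields $\vp_\alpha(e_\alpha\eta)=H^{e_\alpha\eta}_\alpha(1)-m^{e_\alpha\eta}_\alpha=\vp_\alpha(\eta)+1$, proving (iv). For condition (v) I would avoid recomputing the minimum of $H^{f_\alpha\eta}_\alpha$ directly---this is the delicate point, since $f_\alpha$ can lower the minimum and one must argue it drops by exactly one---and instead deduce it from (iv) via inversion: if $f_\alpha\eta\ne\zero$, then $e_\alpha(f_\alpha\eta)=\eta\ne\zero$ by the preceding Proposition, so applying (iv) to $f_\alpha\eta$ gives $\ve_\alpha(\eta)=\ve_\alpha(f_\alpha\eta)-1$ and $\vp_\alpha(\eta)=\vp_\alpha(f_\alpha\eta)+1$, i.e., $\ve_\alpha(f_\alpha\eta)=\ve_\alpha(\eta)+1$ and $\vp_\alpha(f_\alpha\eta)=\vp_\alpha(\eta)-1$.

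The only genuinely non-formal ingredients are Proposition~\ref{3.3}, Lemma~\ref{3.1}, and the inversion Proposition, all already in hand; routing (v) through (iv) and inversion is what sidesteps the one place (the minimum-behavior of $f_\alpha$) where a direct computation would be awkward. I therefore expect the write-up to be a short, essentially mechanical verification, with the only care needed being to match each axiom to the correct earlier statement and to confirm that restricting the codomain of $\ve_\alpha,\vp_\alpha$ to $\BZ_{\ge 0}$ is legitimate.
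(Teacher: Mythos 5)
Your proposal is correct and matches the paper's intent: the paper states this theorem without a written proof precisely because, as you observe, every axiom follows mechanically from Lemma~\ref{lem:wt}, Proposition~\ref{3.3} and its $f_\alpha$-analogue, Lemma~\ref{3.1}, Corollary~\ref{3.2}, the identity $\vp_\alpha(\eta)=H^\eta_\alpha(1)-m^\eta_\alpha$, and the inversion proposition. Your routing of axiom (v) through axiom (iv) and inversion is exactly the right way to avoid analyzing the minimum of $H^{f_\alpha\eta}_\alpha$ directly; the only tiny point worth making explicit in a write-up is that $m^\eta_\alpha\le H^\eta_\alpha(1)$ holds because $H^\eta_\alpha(1)=\pair{\wt(\eta)}{\alpha^\vee}$ is itself an \emph{integer} value attained by $H^\eta_\alpha$ (via Lemma~\ref{lem:wt}), since $m^\eta_\alpha$ is the minimum only over integer values.
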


\subsection{Connectedness of the pseudo-crystal $\BPQLS(\lambda)$}

Fix a dominant weight $\lambda$.
We set $\eta_w := (w; 0,1)$ for $w \in W$.
\begin{theo}\label{4.1}
For all $\eta \in \BPQLS(\lambda)$, 
there exist
$\zeta_1, \ldots, \zeta_k \in \Delta$ such that
$e_{\zeta_1}\cdots e_{\zeta_k}\eta = \eta_\id$. 
In particular, the pseudo-crystal $\BPQLS(\lambda)$ is connected. 
\end{theo}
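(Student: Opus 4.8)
The plan is to drive an arbitrary $\eta=(w_1,\dots,w_s;\sigma_0,\dots,\sigma_s)\in\BPQLS(\lambda)$ to $\eta_\id=(e;0,1)$ in two phases: \textbf{(Phase 1)} reduce the number $s$ of linear pieces to $1$, reaching a straight‑line path $\eta_w=(w;0,1)$ with $w\in W^S$; and \textbf{(Phase 2)} reduce $\eta_w$ to $\eta_\id$ by induction on $\ell(w)$. One should note at the outset that the naive shortcut — applying only positive‑root operators $e_\alpha$ ($\alpha\in\Delta^+$) to raise the weight until it is maximal — does \emph{not} work here: unlike for Kashiwara crystals, $\eta_\id$ is not the unique source. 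For instance, in type $A_2$ with $\lambda=\varpi_1+\varpi_2$ the element $(e,\lon;0,\half,1)$ (which is $\pr(\widetilde\eta_1)$ in the Example, hence lies in $\BPQLS(\lambda)$ by Proposition~\ref{fiber}) satisfies $\langle\eta(t),\alpha^\lor\rangle\ge 0$ for every $t$ and every $\alpha\in\Delta^+$, so $e_\alpha\eta=\zero$ for all positive $\alpha$, yet $\eta\ne\eta_\id$. Thus genuinely non‑positive labels $\zeta\in\Delta$ are needed, and the correct monotone quantity is $s$, not $\wt(\eta)$.

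\textbf{Phase 2 (the routine part).} Let $\eta_w=(w;0,1)$ with $w\in W^S\setminus\{e\}$. Since $w\lambda\ne\lambda$ is not dominant, there is a simple root $\alpha_i$ with $c:=-\langle w\lambda,\alpha_i^\lor\rangle=-\langle\lambda,w^{-1}\alpha_i^\lor\rangle\ge 1$ (note $w^{-1}\alpha_i\in\Delta^-\setminus\Delta^-_S$ because $w\in W^S$). Here $H^{\eta_w}_{\alpha_i}(t)=-ct$, so $m^{\eta_w}_{\alpha_i}=-c$ and $\ve_{\alpha_i}(\eta_w)=c$ by Corollary~\ref{3.2}. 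A direct computation using Remark~\ref{rem:ealpha} and the nesting in Corollary~\ref{3.2} shows that exhausting the operator, $e_{\alpha_i}^{\,c}\eta_w=\eta_{\lfloor s_{\alpha_i}w\rfloor}$, is again a single‑segment path, with $\ell(\lfloor s_{\alpha_i}w\rfloor)\le\ell(s_{\alpha_i}w)=\ell(w)-1$. Induction on $\ell(w)$ then terminates at $\eta_e=\eta_\id$. (I would verify the $c=1,2$ cases by hand to pin down the merging of the reflected piece, which is what keeps the result single‑segment.)

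\textbf{Phase 1 (the main obstacle).} I would argue by induction on $s$: if $s\ge 2$, I want to produce a single $e_\zeta$ ($\zeta\in\Delta$) that straightens one breakpoint, merging two consecutive pieces and lowering $s$ by one. Focus on the last breakpoint $\sigma_{s-1}$, where condition (C) of Definition~\ref{def_qls} supplies a directed $\DBG_{\sigma_{s-1}\lambda}$‑path from $w_s$ to $w_{s-1}$. The idea is to choose $\zeta$ so that the interval $(t_0,t_1)$ of \eqref{eq_3.1}–\eqref{eq_3.2} for $e_\zeta$ is exactly $(\sigma_{s-1},\sigma_s)$ (equivalently, $H^\eta_\zeta$ attains its relevant minimum precisely along the last segment), and so that reflecting that segment replaces $w_s$ by an element of $w_{s-1}W_S$; when the connecting $\DBG$‑path is a single edge $w_s\xrightarrow{\gamma}w_{s-1}$ one takes $\zeta=\pm w_{s-1}\gamma$, and in general one peels the edges of the condition‑(C) path off one at a time. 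The resulting path lies in $\BPQLS(\lambda)$ by Proposition~\ref{3.3}, and condition (C) is re‑established via Lemma~\ref{parabolic_DBG}.

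The hard part is precisely this last step: verifying that a well‑chosen $\zeta$ (read off from the condition‑(C) edges at a breakpoint) forces $m^\eta_\zeta\le -1$ with the active reflection interval landing exactly on the chosen segment, and that the reflection merges the two pieces rather than splitting others — all while controlling the breakpoints and the labels through Remark~\ref{rem:ealpha}. The potential $s$ is the right induction variable, but making each ``unfolding'' decrease it by exactly one, without transient increases, is the delicate bookkeeping I would expect to occupy the bulk of the proof; everything else (Phase 2, and the reductions to $\BPQLS(\lambda)$) follows from Propositions~\ref{3.3}, \ref{fiber} and Corollary~\ref{3.2} already established.
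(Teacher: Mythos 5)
Your Phase 2 is correct and is exactly the paper's base case (induction on $\ell(w)$, applying $e_{\alpha_i}^{\max}$ for a simple root with $\pair{w\lambda}{\alpha_i^\vee}<0$; this is Lemma \ref{4.2} specialized to a one-segment path), and your preliminary observation that non-positive labels are unavoidable is also correct. The genuine gap is in Phase 1, and it is not mere bookkeeping: \emph{you cannot prescribe where a root operator acts}. By \eqref{eq_3.1}, $t_1$ is the \emph{first} point at which $H^\eta_\zeta$ attains its minimal integer value over the \emph{whole} of $[0,1]$; one checks (using $H^\eta_\zeta(0)=0$, continuity, and $H^\eta_\zeta(1)\in\BZ$) that $t_1=1$ holds if and only if $H^\eta_\zeta(t)>H^\eta_\zeta(1)$ for every $t<1$. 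This is a global condition on the entire path, and none of your finitely many candidates $\zeta=\pm w_{s-1}\gamma$, read off from the condition-(C) edges at $\sigma_{s-1}$, need satisfy it, because the portion $\eta|_{[0,\sigma_{s-1}]}$ is arbitrary and can dip below $H^\eta_\zeta(1)$. When that happens, $e_\zeta$ modifies the middle of the path and never touches the last breakpoint; worse, by Corollary \ref{3.2}\,(2) the action intervals of successive powers of $e_\zeta$ move weakly toward $t=0$, so no power of $e_\zeta$ ever returns to the last segment. Even in the favorable case $t_1=1$, a single application reflects only the tail piece on which $H$ drops from $m+1$ to $m$; merging the last two segments requires $(1-\sigma_{s-1})\pair{\lambda}{\gamma^\vee}$ applications, which is generically $>1$, so a single $e_\zeta$ creates a new breakpoint and \emph{increases} $s$. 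Thus $s$ is not monotone under your moves and cannot serve as the induction variable.

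The paper's proof inverts your Phase 1: it straightens the path at the \emph{first} breakpoint, where $H^\eta_\alpha(0)=0$ anchors everything. Writing the condition-(C) path at $\sigma_1$ as $w_1=x_0\xleftarrow{\gamma_1}\cdots\xleftarrow{\gamma_p}x_p=v_2\in w_2W_S$ with labels in $\Delta^+\setminus\Delta^+_S$ (Lemma \ref{parabolic_DBG}\,(1)) and setting $\alpha:=-w_1\gamma_1$, Lemma \ref{4.3} gives $H^\eta_\alpha(\sigma_1)\in\BZ_{<0}$ with $H^\eta_\alpha$ linear and strictly decreasing on $[0,\sigma_1]$; Lemma \ref{4.2} then shows that the first $\ve_\alpha(\eta)-b$ applications of $e_\alpha$ (where $b=-H^\eta_\alpha(\sigma_1)$) act entirely to the right of $\sigma_1$, while the last $b$ sweep the first segment backwards, so that $e_\alpha^{\max}\eta$ has its whole first segment reflected onto $s_\alpha w_1\lambda$ --- precisely the control that is unavailable at the far end of the path. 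Peeling off $\gamma_1,\ldots,\gamma_p$ one edge at a time (Lemma \ref{lem:sig}, whose proof needs the case analysis $w_2'=w_2$ versus $w_2'=\lfloor s_\alpha w_2\rfloor$, a complication your sketch does not address) yields a monomial $X$ with $X\eta=(v_1,\ldots;\sigma_0,\sigma_1',\ldots)$ and $\sigma_1'>\sigma_1$; the outer induction in Theorem \ref{4.1} is then \emph{descending} induction on $\sigma_1$ in the finite set $T_\lambda$, with base case $\sigma_1=1$ (your Phase 2). So the correct monotone quantity is the first breakpoint $\sigma_1$, not the number of segments $s$, and the correct place to unfold the path is its beginning, not its end.
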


In order to prove this theorem,
we need a few lemmas given below. For
$\eta \in \BPQLS(\lambda)$ and $\alpha \in \Delta$,
we set
$e_\alpha^{\max} := e_\alpha^{\varepsilon_\alpha(\eta)}\eta \in \BPQLS(\lambda)$;
recall that $\ve_\alpha (\eta) = -m_\alpha^\eta$.

\begin{lemm}\label{4.2}
Let
$\eta = (w_1 , \ldots , w_s; \sigma_0 , \ldots, \sigma_s )\in \BPQLS(\lambda)$, and $\alpha \in \Delta$.
Assume that 
$H_\alpha^\eta (\sigma_1) = \sigma_1 \langle w_1 \lambda, \alpha^\lor \rangle \in \mathbb{Z}_{< 0}$.
Set
$b := - H_\alpha^\eta(\sigma_1) \in \mathbb{Z}_{> 0}$,
$m := m_\alpha^\eta$, and
$p := -m -b \in \mathbb{Z}_{\geq 0}$.
For each
$0 \leq k \leq \varepsilon_\alpha(\eta)-1=-m -1=p+b-1$,
define
$0 \leq t_0^{(k)}<t_1^{(k)}\leq 1$
as in \eqref{eq_3.1} and \eqref{eq_3.2} for
$e_\alpha^k \eta \in \BPQLS(\lambda)$ and $\alpha$. Then, we have 
\begin{align*}
0=
\underbrace{t_0^{(p+b-1)}<t_1^{(p+b-1)}}_{ \text{\rm for $e_\alpha^{p+b-1}\eta$} }
&=\cdots =
\underbrace{t_0^{(p+1)}<t_1^{(p+1)}}_{ \text{\rm for $e_\alpha^{p+1}\eta$} }
=
\underbrace{t_0^{(p)}<t_1^{(p)}}_{ 
 \text{\rm for $e_\alpha^p \eta$} }=\sigma_1\\
&\leq 
\underbrace{t_0^{(p-1)} < t_1^{(p-1)}}_{ 
 \text{\rm for $e_\alpha^{p-1}\eta$} } \le \cdots \le
\underbrace{t_0^{(1)} < t_1^{(1)}}_{ 
 \text{\rm for $e_\alpha \eta$} } \le 
\underbrace{t_0^{(0)} < t_1^{(0)}}_{ 
 \text{\rm for $\eta$} }\leq 1,
\end{align*}
with
\begin{equation}\label{eq_4.1}
t_1^{(p+k)}=
\frac{(b-k)\sigma_1}{b},
\quad
t_0^{(p+k)}=
\frac{(b-k-1)\sigma_1}{b}
\quad \text{\rm for} \ 
0 \leq k \leq b-1. 
\end{equation}
Hence, for $t \in [0,\sigma_1]$, we have
\begin{equation}\label{eq_4.2}
(e_\alpha^{p+k}\eta)(t)=
\begin{cases}
t(w_1 \lambda) & \text{\rm for } t \in [0, \frac{(b-k)\sigma_1}{b}],\\[2mm]
t(s_\alpha w_1 \lambda)& \text{\rm for } t \in [\frac{(b-k)\sigma_1}{b}, \sigma_1],
\end{cases}
\quad \text{\rm for } 0 \leq k \leq b-1; 
\end{equation}
in particular, 
\begin{equation*}
(e_\alpha^{\max}\eta)(t) = t(s_\alpha w_1 \lambda) \quad 
\text{\rm for } t \in [0, \sigma_1].
\end{equation*}
\end{lemm}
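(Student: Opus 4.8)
The plan is to reduce the entire statement to the analysis of the single piecewise-linear function $H = H^\eta_\alpha$ attached to the original $\eta$, exploiting Corollary~\ref{3.2}, which already expresses every pair $t_0^{(k)}, t_1^{(k)}$ purely through level sets of $H^\eta_\alpha$: namely $t_1^{(k)} = \min\{t : H^\eta_\alpha(t) = m^\eta_\alpha + k\}$ and $t_0^{(k)} = \max\{t \in [0, t_1^{(k)}] : H^\eta_\alpha(t) = m^\eta_\alpha + k + 1\}$, together with their nesting. On the first segment $[0,\sigma_1]$ one has $\eta(t) = t\, w_1\lambda$, so $H(t) = t\pair{w_1\lambda}{\alpha^\lor}$ is linear; the hypothesis $H(\sigma_1) = -b < 0$ forces $\pair{w_1\lambda}{\alpha^\lor} = -b/\sigma_1$, so $H$ decreases strictly and monotonically from $0$ to $-b$ on $[0,\sigma_1]$, attaining each of the values $-1, -2, \ldots, -b$ exactly once there.

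First I would pin down the turning points, that is, establish \eqref{eq_4.1}. Since $p = -m - b$, one has $m + p = -b$, so for $0 \le k \le b-1$ the level $m + p + k = -b + k$ lies in $\{-b, \ldots, -1\}$. Because $H$ starts at $0$ and decreases monotonically to $-b$ on the \emph{initial} interval $[0,\sigma_1]$, each such level is first attained (over all of $[0,1]$) inside $[0,\sigma_1]$, at the point where $-(b/\sigma_1)t = -b + k$, i.e.\ at $t = (b-k)\sigma_1/b$; this yields $t_1^{(p+k)} = (b-k)\sigma_1/b$, and the analogous computation for the level $-b+k+1$ on $[0, t_1^{(p+k)}]$ gives $t_0^{(p+k)} = (b-k-1)\sigma_1/b$. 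Reading off consecutive indices gives the equalities $t_1^{(p+k+1)} = t_0^{(p+k)}$ together with $t_1^{(p)} = \sigma_1$ and $t_0^{(p+b-1)} = 0$, which is the first displayed chain; the remaining tail $\sigma_1 = t_1^{(p)} \le t_0^{(p-1)} < \cdots \le t_0^{(0)} < t_1^{(0)} \le 1$ is exactly the nesting already furnished by Corollary~\ref{3.2}.

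Next I would prove \eqref{eq_4.2} by induction on $k$ using the defining formula \eqref{eq:ea}; here \eqref{eq_4.2} is read as the assertion that $e_\alpha^{p+k}\eta$ restricted to $[0,\sigma_1]$ is a straight segment issuing from the origin in direction $w_1\lambda$ on $[0,(b-k)\sigma_1/b]$ followed by a segment in direction $s_\alpha w_1\lambda$ on $[(b-k)\sigma_1/b,\sigma_1]$. For the base case $k=0$, the inequality $t_0^{(j)} \ge t_1^{(p)} = \sigma_1$ for $0 \le j \le p-1$ (again from the nesting) shows that the first $p$ applications of $e_\alpha$ act only on $[\sigma_1,1]$ and hence leave $\eta$ unchanged on $[0,\sigma_1]$, so $e_\alpha^p\eta$ still runs in direction $w_1\lambda$ throughout $[0,\sigma_1]$. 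For the inductive step, applying $e_\alpha$ to $e_\alpha^{p+k}\eta$ with $t_0 = t_0^{(p+k)}$, $t_1 = t_1^{(p+k)}$ leaves the path untouched on $[0,t_0^{(p+k)}]$, reflects the increment by $s_\alpha$ on $[t_0^{(p+k)},t_1^{(p+k)}]$ (turning direction $w_1\lambda$ into $s_\alpha w_1\lambda$), and merely translates by $\alpha$ on $[t_1^{(p+k)},\sigma_1]$ (where the direction was already $s_\alpha w_1\lambda$); hence the single turning point moves back from $t_1^{(p+k)}$ to $t_0^{(p+k)} = (b-(k+1))\sigma_1/b = t_1^{(p+k+1)}$, which is \eqref{eq_4.2} for $k+1$.

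Finally, taking $k = b-1$ leaves a turning point at $\sigma_1/b$; one further application of $e_\alpha$ (producing $e_\alpha^{p+b}\eta = e_\alpha^{\max}\eta$, since $\ve_\alpha(\eta) = -m = p+b$) reflects the last remaining $w_1\lambda$-piece, so that on the whole of $[0,\sigma_1]$ the path emanates from the origin in the constant direction $s_\alpha w_1\lambda$, giving $(e_\alpha^{\max}\eta)(t) = t(s_\alpha w_1\lambda)$. The main obstacle I anticipate is the bookkeeping that ties the global minimum $m$ to the first-segment data: verifying that the levels $-b+k$ are first hit inside $[0,\sigma_1]$, and threading the nesting of Corollary~\ref{3.2} so as to cleanly separate the applications with $j < p$ (which act beyond $\sigma_1$ and so are invisible on $[0,\sigma_1]$) from those with $j \ge p$ (which act within $[0,\sigma_1]$). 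Once this separation is in place, the reflection computation in each inductive step is routine.
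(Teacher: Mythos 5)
Your proof is correct, and it is a genuine (if modest) streamlining of the paper's argument rather than a copy of it. Both proofs share the same skeleton: linearity of $H^{\eta}_{\alpha}$ on $[0,\sigma_1]$, Corollary~\ref{3.2}, and an induction in which each application of $e_{\alpha}$ beyond the $p$-th folds one further subsegment of length $\sigma_1/b$ by $s_{\alpha}$. The difference lies in how the turning points are produced. The paper first proves $(e_{\alpha}^{p}\eta)(t)=\eta(t)$ on $[0,\sigma_1]$ (its claim \eqref{eq_4.3}) by contradiction, choosing a minimal $q\le p$ with $e_{\alpha}^{q}\eta\ne\eta$ somewhere on $[0,\sigma_1]$ and playing the identity $H^{\eta}_{\alpha}(t_0^{(q-1)})=m+q\le m+p=H^{\eta}_{\alpha}(\sigma_1)$ against the strict decrease of $H^{\eta}_{\alpha}$ there; it then locates $t_0^{(p+k)},t_1^{(p+k)}$ one step at a time by analyzing the height function $H^{e_{\alpha}^{p+k}\eta}_{\alpha}$ of each successive path, closing with ``by repeating this argument''. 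You instead use the full strength of Corollary~\ref{3.2}\,(2) --- that every $t_1^{(p+k)}$ and $t_0^{(p+k)}$ is a first or last passage time of the single function $H^{\eta}_{\alpha}$ --- to obtain \eqref{eq_4.1} and the whole displayed chain in one stroke; the base case $e_{\alpha}^{p}\eta=\eta$ on $[0,\sigma_1]$ then needs no contradiction, being immediate from the nesting $t_0^{(j)}\ge t_1^{(p)}=\sigma_1$ for $j<p$. What the paper's route buys is that each step is verified directly against the path just produced; what yours buys is that no height function other than $H^{\eta}_{\alpha}$ is ever recomputed, and the contradiction argument disappears. Finally, your decision to read \eqref{eq_4.2} as an assertion about turning points and directions is the sound one: taken literally, the expression $t(s_{\alpha}w_1\lambda)$ on the second subinterval describes the actual path only up to a translation (the true path there is $t(s_{\alpha}w_1\lambda)-(b-k)\alpha$, and this discrepancy vanishes exactly in the final statement about $e_{\alpha}^{\max}\eta$), and what your induction establishes is precisely the continuous, correct version of that assertion.
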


\begin{proof}
First we claim that
\begin{equation}\label{eq_4.3}
(e_\alpha^{p}\eta)(t) = \eta (t) = t(w_1 \lambda)
\mbox{ for }
t \in  [0, \sigma_1],
\end{equation}
which is equivalent to the inequalities:
$\sigma_1 \leq t_0^{(p-1)}<t_1^{(p-1)}\leq \cdots \leq t_0^{(1)} < t_1^{(1)} \leq t_0^{(0)} < t_1^{(0)} \leq 1$
(see Corollary \ref{3.2}).
Suppose, for a contradiction, that
$(e_\alpha^p \eta)(t)\neq \eta (t)$ for some $t \in [0, \sigma_1]$.
Let $1 \leq q \leq p$
be the minimal element such that $(e_\alpha^q \eta)(t) \neq \eta(t)$
for some $t \in [0, \sigma_1]$.
Since 
$(e_\alpha^{q-1}\eta)(t)= \eta (t)$
for all $t \in [0, \sigma_1]$
by the definition of $q$,
we see from the definition of $e_\alpha$
that
$t_0^{(q-1)}<\sigma_1$.
Also, since
\begin{equation*}
H_\alpha^\eta (t) = 
t \underbrace{\langle w_1 \lambda, \alpha^\lor \rangle}_{<0}
\mbox{ for }
t \in [0, \sigma_1],
\end{equation*}
it follows that 
$H_\alpha^\eta (t_0^{(q-1)}) > H_\alpha^\eta(\sigma_1)$.
However,
by Corollary \ref{3.2},
we have
\begin{equation*}
H_\alpha^\eta (t_0^{(q-1)}) = m_\alpha^\eta + (q-1)+1 =m+q
\leq m+p = H_\alpha^\eta (\sigma_1),
\end{equation*}
which is a contradiction.
Thus we have shown \eqref{eq_4.3}.
Now it follows from Corollary~\ref{3.2}\,(1) that 
\begin{equation*}
- m^{e_{\alpha}^{p}\eta}_{\alpha} = \ve_{\alpha}(e_{\alpha}^{p}\eta) = 
\ve_{\alpha}(\eta) - p = - m - p = b. 
\end{equation*}
By \eqref{eq_4.3}, together with the assumption that 
$\sigma_1 \langle w_1 \lambda, \alpha^\lor \rangle \in \mathbb{Z}_{< 0}$, 
we see that the function $H^{e_{\alpha}^{p}\eta}_{\alpha}(t)$ is 
strictly decreasing on $[0,\sigma_{1}]$, and that
$H^{e_{\alpha}^{p}\eta}_{\alpha}(\sigma_{1}) = -b = m^{e_{\alpha}^{p}\eta}_{\alpha}$. 
Hence we have $t_{1}^{(p)}=\sigma_{1}$ and $t_{0}^{(p)}=(b-1)\sigma_{1}/b$ 
by the definition of these points. Also, from the definition of the root operator $e_{\alpha}$ and 
\eqref{eq_4.3}, we deduce that for $t \in  [0, \sigma_{1}]$, 
\begin{equation} \label{eq_4.3-2}
(e_\alpha^{p+1}\eta)(t) = 
\begin{cases}
t(w_1 \lambda) & \text{\rm for } t \in [0, \frac{(b-1)\sigma_1}{b}], \\[2mm]
t(s_\alpha w_1 \lambda) & \text{\rm for } t \in [\frac{(b-1)\sigma_1}{b}, \sigma_1]. 
\end{cases}
\end{equation}
The same argument as above shows that 
the function $H^{e_{\alpha}^{p+1}\eta}_{\alpha}(t)$ is 
strictly decreasing on $[0,t_{0}^{(p)}]$, and that
$H^{e_{\alpha}^{p+1}\eta}_{\alpha}(t_{0}^{(p)}) = - b + 1 = 
m^{e_{\alpha}^{p+1}\eta}_{\alpha}$. 
Hence we have $t_{1}^{(p+1)}=t_{0}^{(p)}$ and $t_{0}^{(p+1)}=(b-2)\sigma_{1}/b$ 
by the definition of these points. From the definition of the root operator $e_{\alpha}$ and 
\eqref{eq_4.3-2}, we deduce that 
\begin{equation*}
(e_\alpha^{p+2}\eta)(t) = 
\begin{cases}
t(w_1 \lambda) & \text{\rm for } t \in [0, \frac{(b-2)\sigma_1}{b}], \\[2mm]
t(s_\alpha w_1 \lambda) & \text{\rm for } t \in [\frac{(b-2)\sigma_1}{b}, \sigma_1]. 
\end{cases}
\end{equation*}
By repeating this argument, we obtain \eqref{eq_4.1} and \eqref{eq_4.2}. 
This proves the lemma.
\end{proof}

\begin{lemm}\label{4.3}
Let $\eta = (w_1, \ldots, w_s; \sigma_1 \ldots , \sigma_s) 
\in \BPQLS(\lambda)$, with $s \geq 2$.
Let $v_2 \in w_2 W_S$ be such that 
there exists a directed path
$w_1 = 
x_0
\xleftarrow{\gamma_1}
x_1
\xleftarrow{\gamma_2}
\cdots
\xleftarrow{\gamma_p}
x_p = v_2$ from $v_2$ to $w_1$ in 
$\DBG_{\sigma_1 \lambda}$
whose labels lie in $\Delta^+ \setminus \Delta^+_S$; 
see Lemma~{\rm \ref{parabolic_DBG}\,(1)}.
Then, for all $1 \leq q \leq p$, we have
$H^{\eta}_{-w_1 \gamma_q}(\sigma_1)\in \mathbb{Z}_{<0}$. 
\end{lemm}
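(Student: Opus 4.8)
The plan is to reduce the statement to a one-line evaluation of $H^{\eta}_{-w_1\gamma_q}(\sigma_1)$, exploiting the fact that $\eta$ is linear with direction $w_1\lambda$ on its first segment, together with the defining condition of $\DBG_{\sigma_1\lambda}$. First I would record the shape of $\eta$ on the initial interval: since $\eta$ starts at the origin and has constant direction $w_1\lambda$ on $[0,\sigma_1]$, the piecewise-linear formula defining $\eta$ gives $\eta(t)=t\,w_1\lambda$ for $t\in[0,\sigma_1]$, and in particular $\eta(\sigma_1)=\sigma_1 w_1\lambda$.

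Next I would fix $1\le q\le p$ and set $\alpha:=-w_1\gamma_q\in\Delta$. Because the coroot map is $W$-equivariant and $(-\beta)^\lor=-\beta^\lor$, one has $\alpha^\lor=-w_1\gamma_q^\lor$; hence, using the $W$-invariance of the pairing $\langle\cdot,\cdot\rangle$,
\[
H^{\eta}_{-w_1\gamma_q}(\sigma_1)=\langle \eta(\sigma_1),\alpha^\lor\rangle
=\langle \sigma_1 w_1\lambda,\,-w_1\gamma_q^\lor\rangle
=-\sigma_1\langle\lambda,\gamma_q^\lor\rangle .
\]
It then remains to show that the right-hand side is a negative integer. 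For the sign, $\sigma_1>\sigma_0=0$, and since $\gamma_q\in\Delta^+\setminus\Delta^+_S$ we have $\langle\lambda,\gamma_q^\lor\rangle>0$: dominance of $\lambda$ forces $\langle\lambda,\gamma^\lor\rangle\ge 0$ for every $\gamma\in\Delta^+$, with equality precisely when $\gamma\in\Delta^+_S$, because the stabilizer of $\lambda$ in $W$ is exactly $W_S$, whose reflections are the $s_\gamma$ with $\gamma\in\Delta_S$. For integrality, the edge $x_{q-1}\xleftarrow{\gamma_q}x_q$ lies in $\DBG_{\sigma_1\lambda}$, so by the very definition of $\DBG_{\sigma_1\lambda}$ we have $\sigma_1\langle\lambda,\gamma_q^\lor\rangle\in\mathbb{Z}$. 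Combining these two observations yields $H^{\eta}_{-w_1\gamma_q}(\sigma_1)=-\sigma_1\langle\lambda,\gamma_q^\lor\rangle\in\mathbb{Z}_{<0}$, which is the assertion.

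I do not expect a serious obstacle here, as the proof is essentially a direct computation governed by the definitions of $\eta$ and of $\DBG_{\sigma_1\lambda}$. The only point requiring explicit justification — and the one I would be careful to state — is the strict inequality $\langle\lambda,\gamma_q^\lor\rangle>0$ for $\gamma_q\in\Delta^+\setminus\Delta^+_S$, i.e.\ the identification of $\Delta^+_S$ with the set of positive roots whose coroot pairs to zero against $\lambda$. Everything else is forced, and the hypothesis that the labels $\gamma_q$ lie in $\Delta^+\setminus\Delta^+_S$ (furnished by Lemma~\ref{parabolic_DBG}\,(1)) is exactly what is needed to guarantee the strict negativity rather than mere nonpositivity.
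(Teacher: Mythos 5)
Your proposal is correct and follows essentially the same route as the paper's proof: both reduce to the computation $H^{\eta}_{-w_1\gamma_q}(\sigma_1)=\langle \sigma_1 w_1\lambda,\,-w_1\gamma_q^\lor\rangle=-\sigma_1\langle\lambda,\gamma_q^\lor\rangle$ via $W$-invariance of the pairing, and then invoke the edge condition of $\DBG_{\sigma_1\lambda}$ for integrality together with dominance of $\lambda$ and $\gamma_q\in\Delta^+\setminus\Delta^+_S$ for strict positivity. Your explicit justification of $\langle\lambda,\gamma_q^\lor\rangle>0$ (which the paper leaves implicit in the phrase ``since $\lambda$ is dominant and $\gamma_q\in\Delta^+\setminus\Delta^+_S$'') is a welcome addition, not a deviation.
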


\begin{proof}
Since $\lambda$ is a dominant weight, 
and $\gamma_q \in \Delta^+ \setminus \Delta^+_S$, 
we see from the definition of $\DBG_{\sigma_1 \lambda}$ that 
$\sigma_1 \langle \lambda ,\gamma_q ^\lor \rangle \in \mathbb{Z}_{>0}$. 
Hence it follows that 
\begin{equation*}
H_{-w_1 \gamma_q}^\eta (\sigma_1) =
\sigma_1
\langle w_1 \lambda, - w_1 \gamma_q^\lor \rangle
=
\sigma_1\langle \lambda, - \gamma_q^\lor
\rangle
\in \mathbb{Z}_{<0}.
\end{equation*}
This proves the lemma.
\end{proof}

\begin{lemm} \label{lem:sig}
Let $\eta = (w_1, \ldots, w_s; \sigma_1 \ldots , \sigma_s) 
\in \BPQLS(\lambda)$, with $s \geq 2$. 
Then, there exists a monomial $X$ in the root operators $e_{\alpha}$, 
$\alpha \in \Delta$, such that $X\eta$ is of the form: 
$(v_{1},\dots ; \sigma_{0},\sigma_{1}',\dots)$, 
with $\sigma_{1} < \sigma_{1}' \le 1$. 
\end{lemm}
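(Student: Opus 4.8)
The plan is to produce $X$ as a composite of maximal raising operators that reflects the first linear piece of $\eta$ step by step until its direction becomes $w_2\lambda$, which then merges with the second piece and pushes the first breakpoint past $\sigma_1$. Write $\eta=(w_1,\ldots,w_s;\sigma_0,\ldots,\sigma_s)$ with $\sigma_0=0$. By Definition~\ref{def_qls} there is a directed path from $w_2$ to $w_1$ in $\DBG_{\sigma_1\lambda}$, so Lemma~\ref{parabolic_DBG}\,(1) furnishes $v_2\in w_2W_S$ together with a label-increasing directed path
\[
w_1=x_0\xleftarrow{\gamma_1}x_1\xleftarrow{\gamma_2}\cdots\xleftarrow{\gamma_p}x_p=v_2
\]
in $\DBG_{\sigma_1\lambda}$ whose labels lie in $\Delta^+\setminus\Delta^+_S$; this is precisely the setting of Lemma~\ref{4.3}. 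Here $x_q=x_{q-1}s_{\gamma_q}$, so putting $\alpha_q\eqdef -x_{q-1}\gamma_q\in\Delta$ we have $s_{\alpha_q}x_{q-1}=x_q$ for $1\le q\le p$.

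First I would build $X\eqdef e_{\alpha_p}^{\max}\cdots e_{\alpha_1}^{\max}$ (the maximal power of $e_{\alpha_q}$ at each stage) and analyze it by induction. Set $\eta^{(0)}\eqdef\eta$ and $\eta^{(q)}\eqdef e_{\alpha_q}^{\max}\eta^{(q-1)}$, and assume inductively that $\eta^{(q-1)}$ is linear on $[0,\sigma_1]$ with direction $x_{q-1}\lambda$. Then
\[
H^{\eta^{(q-1)}}_{\alpha_q}(\sigma_1)=\sigma_1\pair{x_{q-1}\lambda}{\alpha_q^\lor}=-\sigma_1\pair{\lambda}{\gamma_q^\lor}\in\mathbb{Z}_{<0},
\]
because $\gamma_q\in\Delta^+\setminus\Delta^+_S$ and the edge $x_{q-1}\xleftarrow{\gamma_q}x_q$ lies in $\DBG_{\sigma_1\lambda}$ (this is exactly the computation underlying Lemma~\ref{4.3}). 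Hence Lemma~\ref{4.2} applies and yields $(e_{\alpha_q}^{\max}\eta^{(q-1)})(t)=t(s_{\alpha_q}x_{q-1}\lambda)=t(x_q\lambda)$ for $t\in[0,\sigma_1]$, which advances the induction. After all $p$ steps, $X\eta=\eta^{(p)}$ is linear on $[0,\sigma_1]$ with direction $x_p\lambda=v_2\lambda=w_2\lambda$, the last equality because $v_2\in w_2W_S$ and $W_S$ stabilizes $\lambda$.

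Finally I would read off the conclusion: since $X\eta$ is linear on $[0,\sigma_1]$ with direction $w_2\lambda$, the same direction carried by the original second segment on $[\sigma_1,\sigma_2]$, the two pieces merge, so in reduced form $X\eta=(v_1,\ldots;\sigma_0,\sigma_1',\ldots)$ with $\sigma_1<\sigma_1'\le 1$, as required. The main obstacle is precisely this last step. Through the parameter $p$ appearing in Lemma~\ref{4.2}, a maximal raising $e_{\alpha_q}^{\max}$ can also modify $\eta^{(q-1)}$ to the \emph{right} of $\sigma_1$, so one must check that these modifications neither destroy the integrality needed to reapply Lemma~\ref{4.2} at the next stage nor reintroduce a corner at $\sigma_1$; equivalently, that the direction of $X\eta$ just to the right of $\sigma_1$ is still $w_2\lambda$ until the merge actually happens. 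I expect to settle this by tracking each height function $H^{\eta^{(q-1)}}_{\alpha_q}$ on a right-neighborhood of $\sigma_1$ and showing that the reflected subintervals never encroach upon $(\sigma_1,\sigma_1+\varepsilon)$ so long as the first two directions remain distinct cosets, so that the second segment is preserved until the final reflection forces $\lfloor x_p\rfloor=w_2$; alternatively, one may induct on the length $p$ of the connecting path, verifying that each reflection strictly decreases the $\DBG_{\sigma_1\lambda}$-distance between the first two directions.
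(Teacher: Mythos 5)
There is a genuine gap, and it sits exactly where you flag ``the main obstacle'': both of your proposed ways of closing it fail. The inductive part of your plan is fine --- repeated use of Lemma~\ref{4.3} and Lemma~\ref{4.2} does make the first linear piece of $\eta^{(q)}$ have direction $x_q\lambda$ --- but the operators $e_{\alpha_q}^{\max}$ can reflect the portion of the path to the \emph{right} of $\sigma_1$, so after all $p$ steps the direction just after $\sigma_1$ need not be $w_2\lambda$, and the pieces need not merge. Concretely, take type $A_2$, $\lambda=2\rho$ (so $\DBG_{\frac{1}{2}\lambda}=\DBG$), and $\eta=(s_1s_2s_1,\,s_1;\,0,\tfrac12,1)\in\BPQLS(\lambda)$. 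The path $s_1\xrightarrow{\alpha_1+\alpha_2}s_2s_1\xrightarrow{\alpha_2}s_1s_2s_1$ is label-increasing for the order $\alpha_1\prec\alpha_1+\alpha_2\prec\alpha_2$ coming from the reduced word $s_1s_2s_1$, so it is a legitimate output of Lemma~\ref{parabolic_DBG}\,(1); in your notation $\gamma_1=\alpha_2$, $\gamma_2=\alpha_1+\alpha_2$, hence $\alpha'_1:=-x_0\gamma_1=\alpha_1$ and $\alpha'_2:=-x_1\gamma_2=\alpha_2$. A direct computation gives $e_{\alpha_1}^{\max}\eta=(s_2s_1,\,e;\,0,\tfrac12,1)$: here $H^{\eta}_{\alpha_1}$ is strictly decreasing on all of $[0,1]$, so the very first application of $e_{\alpha_1}$ reflects the entire second segment (turning $s_1$ into $e$), even though the first two directions lie in distinct cosets --- this already refutes the non-encroachment claim you hope to prove. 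Continuing, $e_{\alpha_2}^{\max}e_{\alpha_1}^{\max}\eta=(s_1,\,e;\,0,\tfrac12,1)$: the first direction is $x_2\lambda=s_1\lambda$ exactly as your induction predicts, but the second direction is $\lambda\ne s_1\lambda$, so the corner at $\sigma_1=\tfrac12$ survives and your $X$ does not satisfy the conclusion. Your fallback criterion fails on the same example: after $e_{\alpha_1}^{\max}$ the $\DBG_{\sigma_1\lambda}$-distance between the first two directions is still $2$.

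The missing idea --- and the heart of the paper's proof --- is an induction on the length $p$ of the connecting path in which the \emph{power} of the root operator is chosen according to what happens to the second segment. With $\alpha=-w_1\gamma_1$, if $e_\alpha^{\max}\eta$ still has second direction $w_2$, one applies the induction hypothesis to $e_\alpha^{\max}\eta$ using the tail $x_1\xleftarrow{\gamma_2}\cdots\xleftarrow{\gamma_p}v_2$ of length $p-1$; but if the second direction has become $\lfloor s_\alpha w_2\rfloor$, one must \emph{not} use $e_\alpha^{\max}$: one uses the partial power $e_\alpha^{p'}$ with $p'=H^{\eta}_{\alpha}(\sigma_1)-m^{\eta}_{\alpha}$, which by Lemma~\ref{4.2} leaves the first segment (hence $w_1$) untouched while replacing $w_2$ by $\lfloor s_\alpha w_2\rfloor$, and then invokes the induction hypothesis via the conjugated path $w_1\xleftarrow{\gamma_2}w_1s_{\gamma_2}\xleftarrow{\gamma_3}\cdots\xleftarrow{\gamma_p}w_1s_{\gamma_2}\cdots s_{\gamma_p}=s_\alpha v_2$, again of length $p-1$, from $s_\alpha v_2\in\lfloor s_\alpha w_2\rfloor W_S$ to $w_1$. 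In the example above this prescription applies $e_{\alpha_1}$ once (not twice), giving $(s_1s_2s_1,\,e;\,0,\tfrac12,1)$, and then $e_{\alpha_1+\alpha_2}^{\max}$ yields $(e;0,1)$, as desired. Without this partial-power device and the accompanying re-derivation of a shorter connecting path after each step, the induction does not close; supplying it is not a routine verification but the core of the argument.
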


\begin{proof}
As in Lemma~\ref{4.3}, 
let $v_2 \in w_2 W_S$ be such that 
there exists a directed path
$w_1 = 
x_0
\xleftarrow{\gamma_1}
x_1
\xleftarrow{\gamma_2}
\cdots
\xleftarrow{\gamma_p}
x_p = v_2$ from $v_2$ to $w_1$ in 
$\DBG_{\sigma_1 \lambda}$
whose labels lie in $\Delta^+ \setminus \Delta^+_S$.
We will prove the assertion of the lemma by induction on $p \geq 1$. 
Assume first that $p =1$. Set $\alpha = - w_1 \gamma_1 \in \Delta$;
notice that $w_2 = \lfloor w_1 s_{\gamma_1}  \rfloor = \lfloor s_\alpha w_1 \rfloor$.
Then, by Lemma \ref{4.3},
we see that
$H_\alpha^\eta (\sigma_1) = \sigma_1 \langle  w_1 \lambda, \alpha^\lor \rangle
\in \mathbb{Z}_{<0}$.
As in Lemma \ref{4.2},
we set 
$b := -H_\alpha^\eta(\sigma_1)\in \mathbb{Z}_{>0}$,
$m := m_\alpha^\eta$, and
$p := -m-b \in \mathbb{Z}_{\geq 0}$.
Also,
for each $0 \leq k \leq \varepsilon_\alpha(\eta)-1 = -m-1 = p+b-1$,
we define
$0 \leq t_0^{(k)} < t_1^{(k)}\leq 1$
as in (\ref{eq_3.1}), (\ref{eq_3.2})
for $e_\alpha^k \eta \in \BPQLS(\lambda)$ and $\alpha$.
Then, by Lemma \ref{4.2}, we see that
\begin{align*}
0=
\underbrace{t_0^{(p+b-1)}<t_1^{(p+b-1)}}_{ \text{\rm for $e_\alpha^{p+b-1}\eta$} }
&=\cdots =
\underbrace{t_0^{(p+1)}<t_1^{(p+1)}}_{ \text{\rm for $e_\alpha^{p+1}\eta$} }
=
\underbrace{t_0^{(p)}<t_1^{(p)}}_{ 
 \text{\rm for $e_\alpha^p \eta$} }=\sigma_1\\
&\leq 
\underbrace{t_0^{(p-1)} < t_1^{(p-1)}}_{ 
 \text{\rm for $e_\alpha^{p-1}\eta$} } \le \cdots \le
\underbrace{t_0^{(1)} < t_1^{(1)}}_{ 
 \text{\rm for $e_\alpha \eta$} } \le 
\underbrace{t_0^{(0)} < t_1^{(0)}}_{ 
 \text{\rm for $\eta$} }\leq 1,
\end{align*}
and that $(e_\alpha^{\max} \eta) (t) = t(s_\alpha w_1 \lambda)$
for $t \in [0,\sigma_1]$.
Since 
$\langle w_2 \lambda, \alpha^\lor \rangle = \langle s_\alpha w_1 \lambda, \alpha^\lor \rangle >0$,
it follows that 
$H_\alpha^\eta (\sigma_1 + \epsilon) > H_\alpha^\eta (\sigma_1 ) = m+ p$
for sufficiently small $\epsilon >0$.
Also, since 
$m+p-1 < H_\alpha^\eta (t)< m+p$
for $t_0^{(p-1)}< t< t_1^{(p-1)}$
(see Corollary \ref{3.2}),
we obtain
$t_0^{(p-1)}\neq \sigma_1$, and
hence $\sigma_1 < t_0^{(p-1)}$.
Therefore, we deduce that
\begin{equation*}
(e_\alpha^{\max} \eta)(\sigma_1 + \epsilon) = w_2 \lambda = s_\alpha w_1 \lambda
\end{equation*}
for sufficiently small $\epsilon >0$, 
which implies that $e_\alpha^{\max} \eta$
is of the form:
$(\lfloor s_\alpha w_1 \rfloor, \ldots; \sigma_0, \sigma'_1, \ldots)$,
with $\sigma'_1 > \sigma_1$.

Assume next that $p >1$. As above,
if we set $\alpha := - w_1 \gamma_1$,
then we see by Lemma \ref{4.3} that
$H_\alpha^\eta(\sigma_1) = \sigma_1 \langle w_1 \lambda , \alpha^\lor \rangle \in \mathbb{Z}_{<0}$,
and then by Lemma \ref{4.2} that
$(e_\alpha^{\max} \eta )(t) = t(s_\alpha w_1 \lambda)$
for $t \in [0, \sigma_1]$.
Hence it follows that 
$e_\alpha^{\max} \eta$ is of the form:
$e_\alpha^{\max} \eta = (\lfloor s_\alpha w_1 \rfloor, \ldots ; \sigma_0, \sigma'_1, \ldots )$, 
with $\sigma'_1 \geq \sigma_1$.
If $\sigma'_1 > \sigma_1$,
then the assertion is obvious. 
Assume now that $\sigma'_1 = \sigma_1$. 
Then, $e_\alpha^{\max} \eta$ is of the form:
\begin{equation*}
e_\alpha^{\max} \eta = (\lfloor s_\alpha w_1 \rfloor, w'_2 \ldots ; \sigma_0, \sigma_1, \ldots);
\end{equation*}
notice that $w'_2$ is equal to either $w_2$ or $\lfloor s_\alpha w_2 \rfloor$
by the definition of root operator $e_\alpha$.
Assume that $w'_2 = w_2$.
Since
\begin{equation*}
s_\alpha w_1 = w_1 s_{\gamma_1} = x_1
\xleftarrow{\gamma_2} \cdots \xleftarrow{\gamma_p} x_p = v_2
\end{equation*}
is a directed path (of length $p-1$) from $v_2 \in w_2 W_S = w_{2}' W_{S}$ 
to $s_\alpha w_1$ in $\DBG_{\sigma_1 \lambda}$
whose labels lie in $\Delta^+ \setminus \Delta^+_S$,
the assertion follows from our induction hypothesis (applied to $e_\alpha^{\max} \eta$).
Assume that $w'_2 = \lfloor s_\alpha w_2 \rfloor$. 
If we set
$p := H_\alpha^\eta(\sigma_1)- m^\eta_\alpha$,
then we deduce from Lemma \ref{4.2}
that $e_\alpha^p \eta$ is of the form:
\begin{equation*}
e_\alpha^p \eta = (w_1, w'_2, \ldots; \sigma_0, \sigma_1, \ldots).
\end{equation*}
Since $s_\alpha v_2
= s_{- w_1 \gamma_1} w_1 s_{\gamma_1} s_{\gamma_2 }\cdots s_{\gamma_p}
=w_1 s_{\gamma_2 }\cdots s_{\gamma_p}$, it follows that
\begin{equation*}
w_1
\xleftarrow{\gamma_2}
w_1 s_{\gamma_2}
\xleftarrow{\gamma_3}
\cdots
\xleftarrow{\gamma_p}
w_1 s_{\gamma_2 }\cdots s_{\gamma_p}
=  s_\alpha v_2
\end{equation*}
is a directed path (of length $p-1$)
from $s_\alpha  v_2 \in w'_2 W_S$ to $w_1$
in $\DBG_{\sigma_1 \lambda}$
whose labels lie in $\Delta^+ \setminus \Delta^+_S$.
Hence the assertion follows from our induction hypothesis 
(applied to $e_\alpha^{p} \eta$). 
This completes the proof of the lemma.
\end{proof}


\begin{proof}[Proof of Theorem~{\rm\ref{4.1}}]
We set
\begin{equation*}
T_\lambda
:= 
\{
b \in [0,1]\cap \mathbb{Q} \mid 
b \langle \lambda, \alpha^\lor \rangle \in \mathbb{Z}
\mbox{ for some }
\alpha \in \Delta 
\};
\end{equation*}
notice that $T_\lambda$ is a finite set. 
Let $\eta = (w_1 ,\ldots , w_s; \sigma_0 , \ldots, \sigma_s )\in \BPQLS(\lambda)$; 
remark that $\sigma_u \in T_\lambda$ for all $0 \leq u \leq s$.
We prove the assertion of the theorem by (descending) induction on $\sigma_1 \in T_\lambda$.
Assume that $\sigma_1 =1$; note that $\eta$ is of the form
$\eta = \eta_{w} = (w; 0,1)$. We show the assertion in this case by induction on 
the length $\ell(w)$ of $w \in W^{S}$. If $\ell(w) = 0$, i.e., if $w = e$, 
then there is nothing to show. Assume that $\ell(w) > 0$. 
Let us take $i \in I$ such that 
$\pair{w\lambda}{\alpha_{i}^{\vee}} < 0$; note that $s_{i}w \in W^{S}$ in this case.
Then, by Lemma~\ref{4.2}, we deduce that 
$e_{\alpha_{i}}^{\max}\eta = e_{\alpha_{i}}^{\max}\eta_{w} = (s_{i}w; 0,1)$. 
Since $\ell(s_{i}w) < \ell(w)$, the assertion follows by the induction hypothesis. 
Assume now that $\sigma_1<1$;
notice that $s \geq 2$.
By Lemma~\ref{lem:sig}, 
there exists a monomial $X$ in the root operators $e_{\alpha}$, 
$\alpha \in \Delta$, such that $X\eta$ is of the form: 
$(v_{1},\dots ; \sigma_{0},\sigma_{1}',\dots)$, 
with $\sigma_{1} < \sigma_{1}' \le 1$. 
The assertion of the theorem follows from our induction hypothesis (applied to $X\eta$). 
This completes the proof of Theorem~\ref{4.1}. 
\end{proof}


\begin{thebibliography}{LNSSSX}








\bibitem[CO15]{CO}
I. Cherednik and D. Orr,
Nonsymmetric difference Whittaker functions,
{\em Math. Z.}, 279 (2015), no. 3-4, 879--938.

\bibitem[CHM$^+$22]{CHMMW}
S. Corteel, J. Haglund, O. Mandelshtam, S. Mason, and L. Williams, 
Compact formulas for Macdonald polynomials and quasisymmetric Macdonald polynomials.
{\em Selecta Math. (N.S.)}, 28 (2022), no. 2, Paper No. 32, 33 pp.

\bibitem[CMW22]{CMW}
S. Corteel, O. Mandelshtam, and L. Williams, 
From multiline queues to Macdonald polynomials via the exclusion process,
{\em Amer. J. Math.}, 144 (2022), no. 2, 395--436. 

\bibitem[GR21]{GR}
W. Guo and A. Ram,
Comparing formulas for type $GL_n$ Macdonald polynomials,
{\tt arXiv:2104.02942} (2021), 37 pp.

\bibitem[GW20]{GW}
A. Garbali and M. Wheeler,
Modified Macdonald polynomials and integrability,
{\em Comm. Math. Phys.}, 374 (2020), no. 3, 1809--1876.

\bibitem[HHL05]{HHL1}
J. Haglund, M. Haiman, and N. Loehr,
A combinatorial formula for Macdonald polynomials,
{\em J. Amer. Math. Soc.}, 18 (2005), no. 3, 735--761.

\bibitem[HHL08]{HHL2}
J. Haglund, M. Haiman,  and N. Loehr,
A combinatorial formula for non-symmetric Macdonald polynomials,
{\em Amer. J. Math.}, 130 (2008), no. 2, 359--383. 



\bibitem[K91]{kas}
M.~Kashiwara,
 On crystal bases of the {$q$}-analogue of universal enveloping
  algebras,
 {\em Duke Math. J.}, 63 (1991), 465--516.

\bibitem[Le09]{Le1}
C.~Lenart, 
On combinatorial formulas for Macdonald polynomials
{\em Adv. Math.}, 220 (2009), no. 1, 324--340.

\bibitem[Le12]{Le2}
C.~Lenart, 
From Macdonald polynomials to a charge statistic beyond type $A$, 
{\em J. Combin. Theory Ser. A}, 119 ( 2012), 683--712.

\bibitem[Li194]{L1}
P. Littelmann,
A Littlewood-Richardson rule for symmetrizable Kac-Moody algebras,
{\em Invent. Math.}, 116 (1994), no. 1, 329--346.

\bibitem[Li95]{L2}
P. Littelmann, 
Paths and root operators in representation theory, 
{\em Ann. Math.}, 142 (1995), no. 3, 499--525.

\bibitem[LNS$^+$15]{1}
C. Lenart, S. Naito, D. Sagaki, A. Schilling, and M. Shimozono,
A uniform model for Kirillov-Reshetikhin crystals I: Lifting the parabolic quantum Bruhat graph,
{\em Int. Math. Res. Not.}, 7 (2015), 1848--1901.

\bibitem[LNS$^+$16]{4}
C. Lenart, S. Naito, D. Sagaki, A. Schilling, and M. Shimozono,
Quantum Lakshmibai-Seshadri paths and root operators, 
Schubert calculus - Osaka 2012, 267--294, 
{\em Adv. Stud. Pure Math.}, vol. 71, Math. Soc. Japan, Tokyo, 2016. 

\bibitem[LNS$^+$17a]{2}
C. Lenart, S. Naito, D. Sagaki, A. Schilling, and M. Shimozono,
A uniform model for Kirillov-Reshetikhin crystals II: Alcove model, path model, and $P=X$, 
{\em Int. Math. Res. Not.}, 14 (2017), 4259--4319. 

\bibitem[LNS$^+$17b]{3}
C. Lenart, S. Naito, D. Sagaki, A. Schilling, and M. Shimozono,
A uniform model for Kirillov-Reshetikhin crystals III: 
Nonsymmetric Macdonald polynomials at $t=0$ and Demazure characters, 
{\em Transform. Groups}, 22 (2017), no. 4, 1041--1079. 

\bibitem[LP]{LP}
C. Lenart and A. Postnikov,
A combinatorial model for crystals of Kac-Moody algebras,
{\em Trans. Amer. Math. Soc.}, 360 (2008), 4349--4381.

\bibitem[LS91]{LS1}
V.~Lakshmibai and C.~S. Seshadri,
 Standard monomial theory,
 in {\em Proceedings of the Hyderabad Conference on Algebraic Groups
  (Hyderabad, 1989)}, pages 279--322, Madras, 1991, Manoj Prakashan.

%

\bibitem[M95]{M0}
I. G. Macdonald,
Symmetric Functions and Hall Polynomials, second ed., 
Oxford Mathematical Monographs, {\em Oxford University Press}, New York, 1995.

\bibitem[M03]{M}
I. G. Macdonald,
Affine Hecke Algebras and Orthogonal Polynomials,
{\em Cambridge Tracts in Mathematics}, vol. 157,
Cambridge University Press, Cambridge, 2003.



\bibitem[NNS18]{NNS}
S. Naito, F. Nomoto, and D. Sagaki, 
Specialization of nonsymmetric Macdonald polynomials at $t=\infty$ and 
Demazure submodules of level-zero extremal weight modules, 
{\em Trans. Amer. Math. Soc.}, 370 (2018), no. 4, 2739--2783. 

\bibitem[NNS20]{nnstpd}
S.~Naito, F.~Nomoto, and D.~Sagaki,
 Tensor product decomposition theorem for quantum
  {L}akshmibai-{S}eshadri paths and standard monomial theory for semi-infinite
  {L}akshmibai-{S}eshadri paths,
 {\em J. Combin. Theory Ser. A}, 169 (2020), Paper ID 105122, 36 pp.

\bibitem[NS05]{NS}
S. Naito and D. Sagaki, 
Crystal of Lakshmibai-Seshadri paths associated to an integral
weight of level zero for an affine Lie algebra, 
{\em Int. Math. Res. Not.}, 14 (2005), 815--840.

\bibitem[NS16]{nasdsl}
S.~Naito and D.~Sagaki,
 Demazure submodules of level-zero extremal weight modules and
  specializations of {M}acdonald polynomials,
 {\em Math. Z.}, 283 (2016), 937--978.

\bibitem[NS21]{naslzv}
S.~Naito and D.~Sagaki,
 Level-zero van der {K}allen modules and specialization of
  nonsymmetric {M}acdonald polynomials at $t = \infty$,
 {\em Transform. Groups}, 26 (2021), 1077--1111.

\bibitem[N16]{nomqls}
F.~Nomoto,
 Quantum {L}akshmibai-{S}eshadri paths and the specialization of
  {M}acdonald polynomials at {$t=0$} in type {$A_{2n}^{(2)}$},
 {\tt arXiv:1606.01067}, 2016.

\bibitem[N17]{nomgwm}
F.~Nomoto,
 Generalized {W}eyl modules and {D}emazure submodules of level-zero
  extremal weight modules,
 {\tt arXiv:1701.08377}, 2017.

\bibitem[OS18]{OS}
D. Orr and M. Shimozono,
Specializations of nonsymmetric Macdonald-Koornwinder polynomials, 
{\em J. Algebraic Combin.}, 47 (2018), no. 1, 91--127.


\bibitem[P05]{P}
A. Postnikov,
Quantum Bruhat graph and Schubert polynomials,
{\em Proc. Amer. Math. Soc.}, 133 (2005), no. 3, 699--709. 

\bibitem[RY11]{RY}
A. Ram and M. Yip,
A combinatorial formula for Macdonald polynomials,
{\em Adv. Math.}, 226 (2011), no. 1, 309--331.

\bibitem[Sa22]{sa}
J.~Saied,
 A combinatorial formula for {S}ahi, {S}tokman, and {V}enkateswaran's
  generalization of {M}acdonald polynomials,
 {\em Adv. Math.}, 404 (2022), Paper ID 108440, 51 pp.

\bibitem[Sc06]{S}
C. Schwer,
 Galleries, Hall-Littlewood polynomials, and structure constants of the spherical Hecke algebra,
{\em Int. Math. Res. Not.}, 31 (2006), Paper ID 75395.

\bibitem[SSV21]{ssv1}
S.~Sahi, J.~Stokman, and V.~Venkateswaran,
 Metaplectic representations of {H}ecke algebras, {W}eyl group
  actions, and associated polynomials,
 {\em Selecta Math. (N.S.)}, 27 (2021), Paper ID 47, 42 pp.

\bibitem[SSV22]{ssv2}
S.~Sahi, J.~Stokman, and V.~Venkateswaran,
 Quasi-polynomial representations of double affine {H}ecke algebras,
 {\tt arXiv:2205.12509 }, 2022.

\bibitem[Y12]{yip}
M. Yip, 
A Littlewood-Richardson rule for Macdonald polynomials,
{\em Math. Z.}, 272 (2012), no. 3-4, 1259--1290.


\end{thebibliography}
\end{document}